\newtheorem{theorem}{\bf \text{Theorem}}
\newtheorem{assumption}{\bf \text{Assumption}}
\newtheorem{corollary}{\bf \text{Corollary}}
\newtheorem{lemma}{\bf \text{Lemma}}
\newtheorem{remark}{\bf \text{Remark}}
\newcommand{\E}{{\mathbb E}}
\newcommand{\Prob}{{\mathbb P}}
\newcommand{\R}{{\mathbb R}}
\newcommand{\0}{\bf{0}}
\newcommand{\TR}{\text{R}}
\newcommand{\Var}{\text{Var}}
\newcommand{\EB}{\text{EB}}
\newcommand{\Sy}{\text{Sy}}
\newcommand{\EEB}{\text{EEB}}
\newcommand{\SURE}{\text{SURE}}
\newcommand{\Rmnum}[1]{\expandafter \@slowromancap\romannumeral #1@}
\DeclareMathOperator*{\rank}{rank}
\DeclareMathOperator*{\cond}{cond}
\DeclareMathOperator*{\Tr}{Tr}
\DeclareMathOperator*{\plim}{plim}
\DeclareMathOperator*{\MSE}{MSE}
\DeclareMathOperator*{\LS}{LS}
\DeclareMathOperator*\argmin{arg\,min}
\title{\LARGE \bf
Supplementary Material for CDC Submission No. 1461*
}
\author{Yue Ju$^{1}$, Tianshi Chen$^{2}$, Biqiang Mu$^{3}$ and Lennart Ljung$^{4}$% <-this % stops a space
\thanks{*This work was supported by the Thousand Youth Talents
Plan funded by the central government of China, the general project funded by NSFC under contract No. 61773329, the Shenzhen research projects funded by the Shenzhen Science and Technology Innovation Council under contract No. Ji-20170189, the President's grant under contract No. PF. 01.000249 and the Start-up grant under contract No. 2014.0003.23 funded by CUHKSZ.}% <-this % stops a space
\thanks{$^{1}$Yue Ju is with the Chinese University of Hong Kong, Shenzhen 518172, China {\tt\small yueju@link.cuhk.edu.cn}}%
\thanks{$^{2}$Tianshi Chen is with the School of Science and Engineering and Shenzhen Research Institute of Big Data, The Chinese University of Hong Kong, Shenzhen 518172, China {\tt\small tschen@cuhk.edu.cn}}%
\thanks{$^{3}$Biqiang Mu is with Key Laboratory of Systems and Control, Institute of Systems Science, Academy of Mathematics and System Science, Chinese Academy of Sciences, Beijing 100190, China {\tt\small bqmu@amss.ac.cn}}
\thanks{$^{4}$Lennart Ljung is with the Division of Automatic Control, Department of Electrical Engineering, Link$\ddot{\text{o}}$ping University, Link$\ddot{\text{o}}$ping SE-58183, Sweden {\tt\small ljung@isy.liu.se}}
}
\begin{document}

\maketitle
\thispagestyle{empty}
\pagestyle{empty}

%%%%%%%%%%%%%%%%%%%%%%%%%%%%%%%%%%%%%%%%%%%%%%%%%%%%%%%%%%%%%%%%%%%%%%%%%%%%%%%%
\begin{abstract}

In this paper, we study the influence of ill-conditioned regression matrix on
two hyper-parameter estimation methods for the kernel-based regularization method: the empirical Bayes ($\EB$) and the Stein's unbiased risk estimator (SURE). First, we consider the convergence rate of the cost functions of EB and SURE, and we find that
they have the same convergence rate but the influence of the ill-conditioned regression matrix on the scale factor are different: the scale factor for SURE
contains one more factor $\cond(\Phi^{T}\Phi)$
than that of EB, where $\Phi$ is the regression matrix and $\cond(\cdot)$ denotes the condition number of a matrix. This finding indicates that when $\Phi$ is ill-conditioned, i.e., $\cond(\Phi^{T}\Phi)$ is large, the cost function of SURE converges slower than that of EB. Then we consider the convergence rate of the optimal hyper-parameters of EB and SURE, and we find that they are both asymptotically normally distributed and have the same convergence rate, but the influence of the ill-conditioned regression matrix on the scale factor are different. In particular, for the ridge regression case, we show that the optimal hyper-parameter of SURE converges slower than that of EB with a factor of $1/n^2$, as $\cond(\Phi^{T}\Phi)$ goes to $\infty$,
where $n$ is the FIR model order.
 \end{abstract}

%%%%%%%%%%%%%%%%%%%%%%%%%%%%%%%%%%%%%%%%%%%%%%%%%%%%%%%%%%%%%%%%%%%%%%%%%%%%%%%%
\section{INTRODUCTION}

In the past decade, kernel-based regularization method (KRM) has gained increasing attention and become a hot research topic in the system identification community.
It has been shown to be a complement \cite{PDCDL2014} to the classical system identification paradigm based on top of the maximum likelihood/ prediction error methods (ML/PEM) \cite{Ljung1999}.

There are two fundamental issues for KRM. The first one is the parameterization of kernel structure with hyper-parameters based on the prior knowledge of the system to be identified, for which several kernels have been invented, e.g. \cite{PD2010} and \cite{COL2012}. The other one is the tuning of hyper-parameters based on the given data to achieve balance in the bias-variance trade-off. Common methods for the hyper-parameter estimation include the cross-validation (CV), empirical Bayes (EB), $C_{\text{p}}$ statistics, Stein's unbiased estimator ($\SURE$) and so on. Among them, $\SURE$ has two variants: $\SURE_{\text{g}}$ corresponds to the mean square error (MSE) related with the impulse response reconstruction ($\MSE_{g}$), while $\SURE_{\text{y}}$ corresponds to the MSE with respect to output prediction ($\MSE_{y}$) \cite{PC2015,MCL2018}.

An interesting phenomena we observed from Monte Carlo simulations in \cite{PDCDL2014,PC2015,MCL2018} is that when the regression matrix is ill-conditioned, $\SURE_{\text{g}}$ and $\SURE_{\text{y}}$ often converges slower than EB, which motivates us to consider the following two questions:
\begin{enumerate}
\item what is the impact of the condition number of $\Phi^{T}\Phi$ on the convergence properties of the cost functions of $\EB$ and $\SURE_{\text{y}}$?

\item what is the impact of the condition number of $\Phi^{T}\Phi$ on the convergence properties of the optimal hyper-parameters of $\EB$ and $\SURE_{\text{y}}$?
\end{enumerate}

In this paper, we use asymptotic analysis tools from statistics to address these questions. In particular, we first consider the convergence rate of the cost functions of EB and SURE, and we find that
they have the same convergence rate but the influence of the ill-conditioned regression matrix on the scale factor are different: the scale factor for SURE
contains one more factor $\cond(\Phi^{T}\Phi)$
than that of EB, where $\Phi$ is the regression matrix and $\cond(\cdot)$ denotes the condition number of a matrix. This finding indicates that when $\Phi$ is ill-conditioned, i.e., $\cond(\Phi^{T}\Phi)$ is large, the cost function of SURE converges slower than that of EB. Then we consider the convergence rate of the optimal hyper-parameters of EB and SURE, and we find that they are both asymptotically normally distributed and have the same convergence rate, but the influence of the ill-conditioned regression matrix on the scale factor are different. In particular, for the ridge regression case, we show that the optimal hyper-parameter of SURE converges slower than that of EB with a factor of $1/n^2$, as $\cond(\Phi^{T}\Phi)$ goes to $\infty$,
where $n$ is the FIR model order.

% Then for the convergence rates of $\EB$ and $\SURE_{\text{y}}$ cost functions, we derive their upper bounds and compare the influences of ill-conditioned $\Phi^{T}\Phi$ by counting the greatest power of its condition number. We also prove the asymptotic normality of the $\EB$ and $\SURE_{y}$ hyper-parameter estimators and derive that the ratio of $\EB$ and $\SURE_{y}$ estimator convergence rates depends on their corresponding asymptotic covariance matrices. In addition, one special case with the ridge regression is analyzed to show that as the condition number of $\Phi^{T}\Phi$ goes to infinity, the asymptotic variance of $\SURE_{y}$ estimator tends to be $n^2$ times larger that that of $\EB$ estimator, where $n$ is the number of parameters to be estimated, which implies that the hyper-parameter estimator of $\SURE_{y}$ converges more slowly than that of $\EB$.

The remaining parts of the paper is organized as follows. In Section $\text{\Rmnum2}$, we introduce the LS method and the RLS method for the linear regression model. In Section $\text{\Rmnum3}$, we show several common kernel structures and hyper-parameter estimation methods, including $\EB$ and $\SURE_{\text{y}}$. In Section $\text{\Rmnum4}$, we compute the upper bounds of the convergence rates of $\EB$ and $\SURE_{\text{y}}$ cost functions and compare them in terms of the greatest power of the condition number of $\Phi^{T}\Phi$. In Section $\text{\Rmnum5}$, we derive the asymptotic normality of $\EB$ and $\SURE_{\text{y}}$ hyper-parameter estimators. In Section $\text{\Rmnum6}$, we illustrate our experiment results with the Monte Carlo simulation method. Our conclusion is given in Section $\text{\Rmnum7}$. All proofs of the theorems and corollaries are listed in Appendix.

\section{Regularized Least Squares Estimation for the Linear Regression Model}

We focus on the linear regression model:
\begin{align}\label{eq:liner regression model at time t}
y(t)=\phi^{T}(t)\theta+v(t),\ t=1,\cdots,N,
\end{align}
where $t$ denotes the time index, $y(t)\in\R$, $\phi(t)\in\R^{n}$, $v(t)\in\R$ represent the output, the regressor and the disturbance at time $t$, and $\theta\in\R^{n}$ is the unknown parameter to be estimated. In addition, $v(t)$ is assumed to be independent and identically distributed (i.i.d.) white noise with zero mean and constant variance $\sigma^{2}>0$.

The model \eqref{eq:liner regression model at time t} can also be rewritten in matrix form as
\begin{align}
Y=\Phi\theta+V,
\end{align}
where
\begin{align}
Y=\left[\begin{array}{c}y(1)\\\vdots\\y(N)\end{array}\right],\ \Phi=\left[\begin{array}{c}\phi^{T}(1)\\\vdots\\ \phi^{T}(N)\end{array}\right],\ V=\left[\begin{array}{c}v(1)\\\vdots\\v(N)\end{array}\right].
\end{align}

Our goal is to estimate the unknown $\theta$ as ``good" as possible based on the historical data sets $\{y(t),\phi(t)\}_{t=1}^{N}$. Two types of mean square error (MSE)(\cite{COL2012}, \cite{PC2015}) can be used to evaluate how ``good" an estimator $\hat{\theta}\in\R^{n}$ of the true parameter $\theta_{0}\in\R^{n}$ performs, which are defined as follows,
\begin{subequations}
\begin{align}\label{eq:MSEg criterion}
{\MSE}_{g}(\hat{\theta})=&\E(\|\hat{\theta}-\theta_{0}\|_{2}^{2}),\\
\label{eq:MSEy criterion}
{\MSE}_{y}(\hat{\theta})=&\E(\|\Phi\theta_{0}+V^{*}-\Phi\hat{\theta}\|_{2}^{2}),
\end{align}
\end{subequations}
where $\E(\cdot)$ denotes the mathematical expectation, $\|\cdot\|_{2}$ denotes the Euclidean norm, and $V^{*}$ is an independent copy of $V$. The smaller MSE indicates the better performance of $\hat{\theta}$. At the same time, $\MSE_{g}$ and $\MSE_{y}$ are closely connected with each other, which is stated in \cite{MCL2018}.

Assume that $\Phi\in\R^{N\times n}$ is full column rank with $N>n$, i.e. $\rank(\Phi)=n$. One classic estimation method is the Least Squares (LS):
\begin{subequations}\label{eq:LS estimator}
\begin{align}\label{eq:LS form1}
\hat{\theta}^{\LS}=&\argmin_{\theta\in\R^{n}}\|Y-\Phi\theta\|_{2}^{2}\\
                  =&(\Phi^{T}\Phi)^{-1}\Phi^{T}Y,
\end{align}
\end{subequations}
Although the LS estimator $\hat{\theta}^{\LS}$ is unbiased, it may have large variance, which still results in large $\MSE_{g}$,
\begin{subequations}
\begin{align}
\E(\hat{\theta}^{\LS})=&\theta_{0},\\
\Var(\hat{\theta}^{\LS})=&\sigma^2\Tr[(\Phi^{T}\Phi)^{-1}],\\
\label{eq:MSEg of LS estimate}
{\MSE}_{g}(\hat{\theta}^{\LS})=&\Var(\hat{\theta}^{\LS})+\|\E(\hat{\theta}^{\LS})-\theta_{0}\|_{2}^2\nonumber\\
                            =&\sigma^2\Tr[(\Phi^{T}\Phi)^{-1}],
\end{align}
\end{subequations}
where $\Var(\cdot)$ is the mathematical variance and $\Tr(\cdot)$ denotes the trace of a square matrix.

\begin{remark}
{\it When $\Phi^{T}\Phi$ is very ill-conditioned, the performance of $\hat{\theta}^{\LS}$ will always be poor by the measure of $\MSE_{g}$. Define that eigenvalues of an $n$-by-$n$ positive definite matrix with $n\geq 2$ are $\lambda_{1}(\cdot)\geq\cdots\geq\lambda_{n}(\cdot)$ and the condition number of this matrix can be represented as $\cond(\cdot)=\lambda_{1}(\cdot)/\lambda_{n}(\cdot)$. Then we can rewrite \eqref{eq:MSEg of LS estimate} as
\begin{align}
{\MSE}_{g}(\hat{\theta}^{\LS})
%=&\sigma^2\Tr((\Phi^{T}\Phi)^{-1})\nonumber\\
%=&\sigma^2\sum_{i=1}^{n}\frac{1}{\lambda_{i}}\nonumber\\
=&\frac{\sigma^2}{\lambda_{1}(\Phi^{T}\Phi)}\left[1+\sum_{i=2}^{n}\frac{\lambda_1(\Phi^{T}\Phi)}{\lambda_{i}(\Phi^{T}\Phi)}\right],
\end{align}
which means that
\begin{align}
&\frac{\sigma^2}{\lambda_{1}(\Phi^{T}\Phi)}\cond(\Phi^{T}\Phi)\nonumber\\
< &{\MSE}_{g}(\hat{\theta}^{\LS}) \leq \frac{n\sigma^2}{\lambda_{1}(\Phi^{T}\Phi)}\cond(\Phi^{T}\Phi).
%\frac{\sigma^2}{\lambda_{1}}\cond(\Phi^{T}\Phi)<\frac{\sigma^2}{\lambda_{1}}(n-1+\cond(\Phi^{T}\Phi))\leq \MSE(\hat{\theta}^{\LS},\theta_{0}) \leq \frac{\sigma^2}{\lambda_{1}}(1+(n-1)\cond(\Phi^{T}\Phi))\leq n\frac{\sigma^2}{\lambda_{1}}\cond(\Phi^{T}\Phi).
\end{align}
There are two factors influencing the lower bound of ${\MSE}_{g}(\hat{\theta}^{\LS})$: $\sigma^2/\lambda_{1}(\Phi^{T}\Phi)$ and $\cond(\Phi^{T}\Phi)$.
\begin{itemize}
	\item For the first factor $\sigma^2/\lambda_{1}(\Phi^{T}\Phi)$, if $\lambda_{1}(\Phi^{T}\Phi)$ is close to zero, then $\|\Phi\|_{F}$, where $\|\cdot\|_{F}$ denotes the Frobenius norm of a matrix, also becomes zero and we could not get enough valid information from outputs. Correspondingly, the estimation of $\theta$ would be very hard even if $\Phi^{T}\Phi$ is well-conditioned.
	
	\item For a fixed $\lambda_{1}(\Phi^{T}\Phi)$, as the second factor $\cond(\Phi^{T}\Phi)$ becomes larger, i.e. $\Phi^{T}\Phi$ becomes more ill-conditioned, the $\MSE_{g}(\hat{\theta}^{\LS})$ will also increase, indicating the worse performance of $\hat{\theta}^{\LS}$.
\end{itemize}
}
\end{remark}

In the following part, we use the concept of almost sure convergence. We define that the random sequence $\{\xi_{N}\}$ converges almost surely to a random variable $\xi$ if and only if $\forall \epsilon>0$, $\lim_{N\to\infty}\Prob(|\xi_{i}-\xi|>\epsilon\ \text{for}\ \text{all}\ i\geq N)=0$, which can be written as $\xi_{N}\overset{a.s.}\to \xi$ as $N\to\infty$.

\begin{remark}
{\it Moreover, $\lambda_{1}(\Phi^{T}\Phi)/\sigma^2$ can conservatively act as the signal-to-noise ratio (snr), which can be defined as the ratio of variances of the noise-free output and the noise:
	\begin{align}\label{eq:snr} \text{snr}=\frac{\frac{1}{N}\sum_{i=1}^{N}(\phi_{i}^{T}\theta_{0}-\frac{1}{N}\sum_{i=1}^{N}\phi_{i}^{T}\theta_{0})^2}{\sigma^2}.
	\end{align}
If we assume that $\{\phi_{i}\}_{i=1}^{N}$ are independent and normally distributed with zero mean and constant covariance $\Sigma\in\R^{n\times n}$, it follows that $\phi_{i}^{T}\theta_{0}\sim \mathcal{N}(0,\theta_{0}^{T}\Sigma\theta_{0})$ for $i=1,\cdots,N$. Define that the eigenvector of $\Sigma$ is $u_{i}\in\R^{n}$ corresponding to $\lambda_{i}(\Sigma)$ with $i=1,\cdots,n$. According to Corollary \ref{corollary:convergence of sample covariance matrix} in Appendix, we know that as $N\to\infty$,
%\begin{align}
%\text{snr} \overset{a.s.}\to& \frac{\theta_{0}^{T}\Sigma\theta_{0}}{\sigma^2}\\
%=&\frac{\sum_{i=1}^{n}\lambda_{i}(\Sigma)\theta_{0}^{T}u_{i}u_{i}^{T}\theta_{0}}{\sigma^2}\nonumber\\
%\leq& \frac{\lambda_{1}(\Sigma)}{\sigma^2}\theta_{0}^{T}\theta_{0}.
%\end{align}
\begin{align}
\text{snr} \overset{a.s.}\to& \frac{\theta_{0}^{T}\Sigma\theta_{0}}{\sigma^2}
=\frac{\sum_{i=1}^{n}\lambda_{i}(\Sigma)\theta_{0}^{T}u_{i}u_{i}^{T}\theta_{0}}{\sigma^2}
\leq \frac{\lambda_{1}(\Sigma)}{\sigma^2}\theta_{0}^{T}\theta_{0}.
\end{align}
Since $\lambda_{1}(\Phi^{T}\Phi)/N \overset{a.s.}\to \lambda_{1}(\Sigma)$ as $N\to\infty$, small $\lambda_{1}(\Phi^{T}\Phi)/\sigma^{2}$ always gives a smaller $\text{snr}$. When the $\text{snr}$ is very small, even if the condition number of $\Phi^{T}\Phi$ is equal to one, $\hat{\theta}^{\LS}$ still performs badly. We usually set $\text{snr}\geq1$ in simulation experiments.}
\end{remark}

\begin{remark}
{\it Interestingly, the ${\MSE}_{y}$ of the LS estimator
\begin{align}
{\MSE}_{y}(\hat{\theta}^{\LS})
=&\E(\|\Phi\theta_{0}+V^{*}-\Phi\hat{\theta}^{\LS}\|_{2}^{2})\nonumber\\
=&(N+n)\sigma^2
\end{align}
is irrespective of $\cond(\Phi^{T}\Phi)$.
}
\end{remark}

To handle this problem, we can introduce one regularization term in \eqref{eq:LS form1} to obtain the regularized least squares (RLS) estimator:
\begin{subequations}\label{eq:RLS estimator}
\begin{align}
\hat{\theta}^{\TR}=&\argmin_{\theta\in\R^{n}}\|Y-\Phi\theta\|_{2}^{2}+\sigma^2\theta^{T}P^{-1}\theta\\
                 =&(\Phi^{T}\Phi+\sigma^2 P^{-1})^{-1}\Phi^{T}Y\\
                 =&P\Phi^{T}Q^{-1}Y,
\end{align}
\end{subequations}
where
\begin{align}
Q=\Phi P\Phi^{T}+\sigma^2I_{N},
\end{align}
$P\in\R^{n\times n}$ is positive semidefinite and often known as the kernel matrix, and $I_{N}$ denotes the $N$-dimensional identity matrix.

\section{Kernel Design and Hyper-parameter Estimation}

For the regularization method, our main concerns are the kernel design and the hyper-parameter estimation.

\subsection{Kernel Design}

The structure of the kernel matrix $P$ should be designed based on the prior knowledge about the true system by parameterizing it with the hyper-parameter $\eta\in\R^{p}$, which can be tuned in the set $\Omega\subset\R^{p}$. Several popular positive semidefinite kernels have been invented before,
\begin{subequations}
\begin{align}\label{eq:SS kernel}
\text{SS}:&P_{i,j}(\eta)=c\left(\frac{\alpha^{i+j+\max(i,j)}}{2}-\frac{\alpha^{3\max(i,j)}}{6}\right)\nonumber\\
          &\eta=[c,\alpha]\in\Omega=\{c\geq 0,\ \alpha\in[0,1]\},\\
\label{eq:DC kernel}
\text{DC}:&P_{i,j}(\eta)=c\alpha^{(i+j)/2}\rho^{|i-j|},\nonumber\\
          &\eta=[c,\alpha,\rho]\in\Omega=\{c\geq 0,\ \alpha\in[0,1],\ |\rho|\leq 1\},\\
\label{eq:TC kernel}
\text{TC}:&P_{i,j}(\eta)=c\alpha^{\max(i,j)},\nonumber\\
          &\eta=[c,\alpha]\in\Omega=\{c\geq 0,\ \alpha\in[0,1]\},
\end{align}
\end{subequations}
where the stable spline (SS) kernel \eqref{eq:SS kernel} is firstly introduced in \cite{PD2010}, the diagonal correlated (DC) kernel \eqref{eq:DC kernel} and the tuned-correlated (TC) kernel \eqref{eq:TC kernel} (also named as the first order stable spline kernel) are introduced in \cite{COL2012}.

\subsection{Hyper-parameter Estimation}

If the structure of $P(\eta)$ has been fixed, our next step is to estimate the hyper-parameter $\eta$ using the historical data. There are many estimation approaches for the tuning of $\eta$, such as the empirical Bayes (EB) method \cite{PDCDL2014}, the Stein's unbiased estimation (SURE) method of $\MSE_{g}$ and $\MSE_{y}$ \cite{PC2015}, the generalized marginal likelihood method, the generalization cross validation (GCV) method \cite{GHW1979} and so on.

Here we mainly investigate two hyper-parameter estimation methods: $\EB$ and $\SURE_{y}$ (the SURE for $\MSE_{y}$). The $\EB$ method assumes that $\theta$ is Gaussian distributed with zero mean and covariance $P$, and $V$ is also normally distributed, i.e.,
\begin{align}
&\theta\sim \mathcal{N}({\0},P),\ V\sim \mathcal{N}({\0},\sigma^2I_{N}),\\
\Rightarrow &Y\sim \mathcal{N}({\0},\Phi P\Phi^{T}+\sigma^2I_{N}).
\end{align}
By maximizing the likelihood function of $Y$, $\EB$ can be represented as
\begin{align}\label{eq:EB hyperparameter estimator}
\text{EB}:\hat{\eta}_{\text{EB}}=&\argmin_{\eta\in\Omega}\mathcal{F}_{\EB}(\eta)\\ \label{eq:EB1}
\mathcal{F}_{\EB}=&Y^{T}Q^{-1}Y+\log\det(Q).
\end{align}
The $\SURE_{y}$ can be written as
\begin{align}
\label{eq: SURE_y hyperparameter estimator}
{\SURE}_{y}:\hat{\eta}_{\Sy}=&\argmin_{\eta\in\Omega}\mathcal{F}_{\Sy}(\eta)\\ \label{eq:SURE_y1}
\mathcal{F}_{\Sy}=&\|Y-\Phi\hat{\theta}^{\TR}(\eta)\|_{2}^{2}+2\sigma^2\Tr(\Phi P\Phi^{T} Q^{-1}).
\end{align}

Before the further analysis, we firstly make some definitions and assumptions, which are consistent with \cite{MCL2018}. Define the corresponding Oracle counterparts of $\EB$ and $\SURE_{y}$ as follows,
\begin{align}
\label{eq:EEB hyperparameter estimator}
\text{EEB}:\hat{\eta}_{\text{\EEB}}=&\argmin_{\eta\in\Omega}\mathcal{F}_{\EEB}(\eta)\\
\label{eq:EEB}
\mathcal{F}_{\EEB}=&\theta_{0}^{T}\Phi^{T}Q^{-1}\Phi\theta_{0}+\sigma^2\Tr(Q^{-1})\nonumber\\
                   &+\log\det(Q),\\
\label{eq:MSEy hyperparameter estimator}
{\MSE}_{y}:\hat{\eta}_{\MSE_{y}}=&\argmin_{\eta\in\Omega}\mathcal{F}_{\MSE_{y}}(\eta)\\
\label{eq:MSEy}
\mathcal{F}_{\MSE_{y}}=&\sigma^4\theta_{0}^{T}\Phi^{T}Q^{-2}\Phi\theta_{0}+\sigma^6\Tr(Q^{-2})\nonumber\\
                       &-2\sigma^4\Tr(Q^{-1})+2N\sigma^2.
\end{align}

\begin{assumption}\label{asp:1}
{\it The optimal hyper-parameter estimates $\hat{\eta}_{\text{EB}}$, $\hat{\eta}_{\text{Sy}}$, $\hat{\eta}_{\text{\EEB}}$ and $\hat{\eta}_{\MSE_{y}}$ are interior points of $\Omega$.}
\end{assumption}

\begin{assumption}\label{asp:almost sure convergence of PP to Sigma}
{\it $P$ is positive definite and as $N\to\infty$, $(\Phi^{T}\Phi)/N$ converges to the positive definite $\Sigma\in\R^{n\times n}$ almost surely, i.e. $(\Phi^{T}\Phi)/N \overset{a.s.}\to \Sigma\succ0$.}
\end{assumption}

Under Assumption \ref{asp:1} and \ref{asp:almost sure convergence of PP to Sigma}, we can define the limit functions of $\EB$, $\EEB$ and $\SURE_{y}$, $\MSE_{y}$ respectively,
\begin{align}\label{eq:opt hyperparameter of Wb}
\eta_{b}^{*}=&\argmin_{\eta\in\Omega}W_{b}(P,\theta_{0})\\
W_{b}(P,\theta_{0})=&\theta_{0}^{T}P^{-1}\theta_{0}+\log\det(P),\\
\label{eq:opt hyperparameter of Wy}
\eta_{y}^{*}=&\argmin_{\eta\in\Omega}W_{y}(P,\Sigma,\theta_{0})\\
W_{y}(P,\Sigma,\theta_{0})=&\sigma^4\theta_{0}^{T}P^{-T}\Sigma^{-1}P^{-1}\theta_{0}-2\sigma^4\Tr(\Sigma^{-1}P^{-1}).
\end{align}

\begin{assumption}\label{asp:2}
{\it	The sets $\eta_{b}^{*}$ and $\eta_{y}^{*}$ are made of isolated points,respectively.}
\end{assumption}

In the following assumption, we apply the concept of the boundedness in probability. Let $\xi_{N}=O_{p}(a_{N})$ denote that $\{\xi_{N}/a_{N}\}$ is bounded in probability, which means that $\forall \epsilon>0$, $\exists L>0$ such that $P(|\xi_{N}/a_{N}|>L)<\epsilon$ for any $N$.

\begin{assumption}\label{asp:boundedness in probability with deltaN}
{\it $\|(\Phi^{T}\Phi)/N-\Sigma\|_{F}=O_{p}(\delta_{N})$ and as $N\to\infty$, $\delta_{N}\to 0$.}
\end{assumption}

Under the Assumption \ref{asp:1}, \ref{asp:almost sure convergence of PP to Sigma}, \ref{asp:2} and \ref{asp:boundedness in probability with deltaN}, it has been shown in \cite{MCL2018} that:
\begin{itemize}
\item $\hat{\eta}_{\Sy}$ is asymptotically optimal, while $\hat{\eta}_{\EB}$ is not, which means that as $N\to\infty$,
\begin{align}
&\hat{\eta}_{\EB} \overset{a.s.}\to \eta_{b}^{*},\ \hat{\eta}_{\EEB} \overset{a.s.}\to \eta_{b}^{*},\\
&\hat{\eta}_{\Sy} \overset{a.s.}\to \eta_{y}^{*},\ \hat{\eta}_{\MSE_{y}} \overset{a.s.}\to \eta_{y}^{*}.
\end{align}
\item the convergence rate of $\hat{\eta}_{\Sy}$ to $\eta^{*}_{y}$ is related with the convergence rate of $(\Phi^{T}\Phi)/N$ to $\Sigma$, while that of $\hat{\eta}_{\EB}$ to $\eta^{*}_{b}$ is not, which means that
    \begin{subequations}\label{eq:convergence rate of EB and SUREy estimators}
    \begin{align}
    &\|\hat{\eta}_{\EB}-\eta_{b}^{*}\|_{2}=O_{p}(1/\sqrt{N}),\\
    &\|\hat{\eta}_{\Sy}-\eta_{y}^{*}\|_{2}=O_{p}(\mu_{N}),\\
    &\mu_{N}=\max(O_{p}(\delta_{N}),O_{p}(1/\sqrt{N})).
    \end{align}
    \end{subequations}
\end{itemize}

\begin{remark}
{\it For the $\SURE_{y}$, it will not encounter the ill-conditionedness for practical computation using the cholesky decomposition \cite{CL2013}, but the ill-conditioned issue will appear if we analyze the property of its hyper-parameter estimator. }
\end{remark}

According to the findings and simulation experiments in \cite{MCL2018}, although $\hat{\eta}_{\EB}$ is not asymptotically optimal, we can still observe better performance of $\hat{\eta}_{\EB}$ than that of $\hat{\eta}_{\Sy}$ in the sense of $\MSE_{g}$, when $\Phi^{T}\Phi$ is ill-conditioned and the sample size is small. Thus we draw attention to the influence of $\cond(\Phi^{T}\Phi)$ on the convergence rates of the cost functions and hyper-parameter estimators of $\EB$ and $\SURE_{\text{y}}$, respectively.

\section{Effects of $\cond(\Phi^{T}\Phi)$ on the Convergence Rates of Cost Functions of $\EB$ and $\SURE_{y}$}

Let
\begin{align}
\overline{\mathcal{F}_{\EB}}
=&\mathcal{F}_{\EB}+Y^{T}\Phi(\Phi^{T}\Phi)^{-1}\Phi^{T}Y/\sigma^2-Y^{T}Y/\sigma^2\nonumber\\
&-(N-n)\log\sigma^2-\log\det(\Phi^{T}\Phi)\\
=&(\hat{\theta}^{\LS})^{T}S^{-1}\hat{\theta}^{\LS}+\log\det(S),\\
%\overline{\mathcal{F}_{\EEB}}=&\mathcal{F}_{\EEB}-(N-n)-(N-n)\log\sigma^2-\log\det(\Phi^{T}\Phi)\\
\overline{\mathcal{F}_{\Sy}}=&N[\mathcal{F}_{\Sy}+Y^{T}\Phi(\Phi^{T}\Phi)^{-1}\Phi^{T}Y-Y^{T}Y-2n\sigma^2]\\
=&N\left[\sigma^4(\hat{\theta}^{\LS})^{T}S^{-T}(\Phi^{T}\Phi)^{-1}S^{-1}\hat{\theta}^{\LS}\right.\nonumber\\
&\left.-2\sigma^4\Tr((\Phi^{T}\Phi)^{-1}S^{-1})\right],\\
%\overline{\mathcal{F}_{\MSE_{y}}}=&N[\mathcal{F}_{\MSE_{y}}-(N+n)\sigma^2],
S=&P+\sigma^2(\Phi^{T}\Phi)^{-1}.
\end{align}
Under Assumption \ref{asp:1}, \ref{asp:almost sure convergence of PP to Sigma} and \ref{asp:2}, it has been proved in \cite{MCL2018} that as $N\to\infty$,
\begin{align}
\overline{\mathcal{F}_{\EB}} \overset{a.s.}\to W_{b},\
\overline{\mathcal{F}_{\Sy}} \overset{a.s.}\to W_{y}.
\end{align}

In fact, we can also investigate the influence of $\cond(\Phi^{T}\Phi)$ on the convergence rates of cost function by computing the upper bounds of $|\overline{\mathcal{F}_{\EB}}- W_{b}|$ and $|\overline{\mathcal{F}_{\Sy}}- W_{y}|$.

\begin{remark}\label{rmk:note for Op()}
	{\it To be clear, the first part of each upper bound in Theorem \ref{thm:convergence rate of Feb and Wb} and \ref{thm:convergence rate of Fsy and Wy} indicates its boundedness in probability. For example, as shown in \eqref{eq:upper bound of inv Phiphi} of Corollary \ref{corollary:upper bounds of building blocks} in Appendix, $\|A_{N}^{-1}\|_{F}=O_{p}(1/a_{N})$. If one term is $O_{p}(1)$, we omit this part for the convenience. }
\end{remark}

Applying Corollary \ref{corollary:upper bounds of building blocks}, the upper bounds of $|\overline{\mathcal{F}_{\EB}}- W_{b}|$ and $|\overline{\mathcal{F}_{\Sy}}- W_{y}|$ can be represented in the following Theorem \ref{thm:convergence rate of Feb and Wb} and \ref{thm:convergence rate of Fsy and Wy}.

\begin{theorem}\label{thm:convergence rate of Feb and Wb}
{\it	Under Assumption \ref{asp:1}, \ref{asp:almost sure convergence of PP to Sigma} and \ref{asp:2}, we have
\begin{align}
|\overline{\mathcal{F}_{\EB}}-W_{b}|
\leq&E_{1,b}+E_{2,b}+E_{3,b},
\end{align}
where
\begin{align}\label{eq:E_1b}
E_{1,b}=&\|\theta_{0}\|_{2}\|\Phi^{T}V\|_{2}\|(\Phi^{T}\Phi)^{-1}\|_{F}(\|S^{-1}\|_{F}+\|P^{-1}\|_{F})\\
\label{eq:E_2b}		
E_{2,b}=&\|(\Phi^{T}\Phi)^{-1}\|_{F}\left[\|S^{-1}\|_{F}\left(\|\Phi^{T}V\|_{2}^2\|(\Phi^{T}\Phi)^{-1}\|_{F}\right.\right.\nonumber\\
&+\sigma^2\|\theta_{0}\|_{2}^2\|P^{-1}\|_{F})+\sqrt{r}\sigma^2\nonumber\\
&\left.\max(\|S^{-1}\|_{F}\|P^{-1}\|_{F}\|P^{1/2}\|_{F}^2,\|P^{-1/2}\|_{F}^2)\right]\\
\label{eq:E_3b}
E_{3,b}=&\sigma^2\|\theta_{0}\|_{2}\|\Phi^{T}V\|_{2}\|(\Phi^{T}\Phi)^{-1}\|_{F}^2\|S^{-1}\|_{F}\|P^{-1}\|_{F}\\
r_{1}=&\rank(I_{n}-P^{1/2}S^{-1}P^{1/2}).
\end{align}
Upper bounds of terms \eqref{eq:E_1b}, \eqref{eq:E_2b} and \eqref{eq:E_3b} are shown as follows, respectively,
\begin{align}		E_{1,b}\leq&\frac{1}{\sqrt{N}}n\|\theta_{0}\|_{2}\frac{N}{\lambda_{1}(\Phi^{T}\Phi)}\cond(\Phi^{T}\Phi)\frac{\|\Phi^{T}V\|_{2}}{\sqrt{N}}\nonumber\\
&\left[\frac{1}{\lambda_{1}(S)}\cond(S)+\frac{1}{\lambda_{1}(P)}\cond(P)\right]\\
E_{2,b}\leq&\frac{1}{N}n^{3/2}\frac{N}{\lambda_{1}(\Phi^{T}\Phi)}\cond(\Phi^{T}\Phi)\nonumber\\ &\left[\frac{1}{\lambda_{1}(S)}\cond(S)\left(\frac{\|\Phi^{T}V\|_{2}^2}{N}\frac{N}{\lambda_{1}(\Phi^{T}\Phi)}\right.\right.\nonumber\\
&\left.\left.\cond(\Phi^{T}\Phi)+\sigma^2\|\theta_{0}\|_{2}^{2}\frac{1}{\lambda_{1}(P)}\cond(P)\right)\right.\nonumber\\		&+\left.\sqrt{r_{1}}\sigma^2\max\left(n\frac{1}{\lambda_{1}(S)}\frac{1}{\lambda_{1}(P)}\frac{1}{\lambda_{1}(S)}\cond(S)\cond(P),\right.\right.\nonumber\\
&\left.\left.\frac{1}{\lambda_{1}(P)}\cond(P)\right)\right]\\		E_{3,b}\leq&\frac{1}{N^{3/2}}n\sigma^2\|\theta_{0}\|_{2}\frac{N^2}{\lambda_{1}^2(\Phi^{T}\Phi)}\frac{1}{\lambda_{1}(S)}\frac{1}{\lambda_{1}(P)}{\cond}^2(\Phi^{T}\Phi)\nonumber\\
&\cond(S)\cond(P)\frac{\|\Phi^{T}V\|_{2}}{\sqrt{N}}.
\end{align}}
\end{theorem}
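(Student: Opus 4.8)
The plan is to write $\overline{\mathcal{F}_{\EB}}-W_{b}$ as the sum of a quadratic-form discrepancy and a log-determinant discrepancy, to bound each by the Frobenius-norm ``building blocks'', and finally to insert the explicit estimates of Corollary~\ref{corollary:upper bounds of building blocks}. First I would use $Y=\Phi\theta_{0}+V$ to write the least-squares estimator as $\hat{\theta}^{\LS}=\theta_{0}+\epsilon$ with $\epsilon=(\Phi^{T}\Phi)^{-1}\Phi^{T}V$, substitute it into $\overline{\mathcal{F}_{\EB}}=(\hat{\theta}^{\LS})^{T}S^{-1}\hat{\theta}^{\LS}+\log\det(S)$, and subtract $W_{b}=\theta_{0}^{T}P^{-1}\theta_{0}+\log\det(P)$. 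Expanding the quadratic form gives
\begin{align*}
\overline{\mathcal{F}_{\EB}}-W_{b}
={}&\theta_{0}^{T}(S^{-1}-P^{-1})\theta_{0}+2\theta_{0}^{T}S^{-1}\epsilon+\epsilon^{T}S^{-1}\epsilon\\
&+\log\det(S)-\log\det(P),
\end{align*}
so that the three $E_{i,b}$ will absorb the three quadratic terms and the log-determinant term after the triangle inequality.

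The quadratic part is routine once the resolvent identity $S^{-1}-P^{-1}=-\sigma^{2}S^{-1}(\Phi^{T}\Phi)^{-1}P^{-1}=-\sigma^{2}P^{-1}(\Phi^{T}\Phi)^{-1}S^{-1}$, which follows from $S-P=\sigma^{2}(\Phi^{T}\Phi)^{-1}$, is available. Applying the first form to $\theta_{0}^{T}(S^{-1}-P^{-1})\theta_{0}$ and the second to rewrite one of the two copies of the cross term, $2\theta_{0}^{T}S^{-1}\epsilon=\theta_{0}^{T}S^{-1}\epsilon+\theta_{0}^{T}P^{-1}\epsilon-\sigma^{2}\theta_{0}^{T}P^{-1}(\Phi^{T}\Phi)^{-1}S^{-1}\epsilon$, I would then use the Cauchy--Schwarz inequality, submultiplicativity of the Frobenius norm, and $\|\epsilon\|_{2}\le\|(\Phi^{T}\Phi)^{-1}\|_{F}\|\Phi^{T}V\|_{2}$. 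The two inner products $\theta_{0}^{T}S^{-1}\epsilon$ and $\theta_{0}^{T}P^{-1}\epsilon$ assemble exactly into $E_{1,b}$, the remaining cross term into $E_{3,b}$, and $\theta_{0}^{T}(S^{-1}-P^{-1})\theta_{0}$ together with $\epsilon^{T}S^{-1}\epsilon$ into the first (non-rank) group of $E_{2,b}$.

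The log-determinant term is where the argument is genuinely delicate, and I expect it to be the main obstacle. Since $(\Phi^{T}\Phi)^{-1}\succ0$ we have $S\succeq P\succ0$, hence $\log\det(S)-\log\det(P)\ge0$ and its absolute value equals itself. Writing $R=I_{n}-P^{1/2}S^{-1}P^{1/2}$, so that $\log\det(S)-\log\det(P)=-\log\det(I_{n}-R)$ with $\rank(R)=r_{1}$ and eigenvalues in $[0,1)$, the inequality $-\log(1-x)\le x/(1-x)$ yields $-\log\det(I_{n}-R)\le\Tr\big((I_{n}-R)^{-1}R\big)=\sigma^{2}\Tr\big(P^{-1}(\Phi^{T}\Phi)^{-1}\big)$, the last identity following from $(I_{n}-R)^{-1}R=\sigma^{2}P^{-1/2}(\Phi^{T}\Phi)^{-1}P^{-1/2}$. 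Bounding this trace of a rank-$r_{1}$ positive semidefinite matrix by $\sqrt{r_{1}}$ times its Frobenius norm and then splitting the norm submultiplicatively produces the $\sqrt{r_{1}}\sigma^{2}$ group of $E_{2,b}$; the $\max$ there simply records the two admissible groupings of the factors $\|P^{\pm1/2}\|_{F}$ (using, e.g., $\|P^{-1/2}\|_{F}\le\|P^{-1}\|_{F}\|P^{1/2}\|_{F}$). The care needed to keep the eigenvalues of $R$ bounded away from $1$ and to track the rank $r_{1}$ correctly is the only step that is not a plain multiplicative bound.

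Finally, with $|\overline{\mathcal{F}_{\EB}}-W_{b}|\le E_{1,b}+E_{2,b}+E_{3,b}$ established, I would invoke Corollary~\ref{corollary:upper bounds of building blocks} to replace each building block by its explicit bound, namely $\|(\Phi^{T}\Phi)^{-1}\|_{F}$, $\|S^{-1}\|_{F}$ and $\|P^{-1}\|_{F}$ by multiples of $\cond(\cdot)/\lambda_{1}(\cdot)$, and $\|\Phi^{T}V\|_{2}$ by its $O_{p}(\sqrt{N})$ scaling. Using Assumption~\ref{asp:almost sure convergence of PP to Sigma} to write $1/\lambda_{1}(\Phi^{T}\Phi)=(1/N)\cdot N/\lambda_{1}(\Phi^{T}\Phi)$ with $N/\lambda_{1}(\Phi^{T}\Phi)=O_{p}(1)$, I would then collect the powers of $N$ and of $\cond(\Phi^{T}\Phi)$ and factor out the overall rate ($1/\sqrt{N}$ for $E_{1,b}$, $1/N$ for $E_{2,b}$, $1/N^{3/2}$ for $E_{3,b}$), which reproduces the three displayed upper bounds and completes the proof.
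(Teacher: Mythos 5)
Your proposal is correct and follows essentially the same route as the paper's proof: the same decomposition into a quadratic-form discrepancy and a log-determinant discrepancy, the same resolvent identity $S^{-1}-P^{-1}=-\sigma^{2}S^{-1}(\Phi^{T}\Phi)^{-1}P^{-1}$, Cauchy--Schwarz and Frobenius-norm submultiplicativity to assemble $E_{1,b}$, $E_{2,b}$, $E_{3,b}$, and Corollary~\ref{corollary:upper bounds of building blocks} to produce the final rate bounds. Your log-determinant step via $-\log(1-x)\le x/(1-x)$ is exactly the paper's inequality $\log\det(X)\le\Tr(X-I_{n})$ with $X=P^{-1/2}SP^{-1/2}$ in disguise, and the fact that you land only in the second argument of the $\max$ in $E_{2,b}$ (exploiting $\log\det(S)-\log\det(P)\ge0$) makes your bound slightly tighter than, and hence still implies, the stated one.
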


\begin{theorem}\label{thm:convergence rate of Fsy and Wy}
{\it  Under Assumption \ref{asp:1}, \ref{asp:almost sure convergence of PP to Sigma}, \ref{asp:2} and \ref{asp:boundedness in probability with deltaN}, we have
\begin{align}
|\overline{\mathcal{F}_{\Sy}}-W_{y}|
\leq&E_{1,y}+E_{2,y}+E_{3,y}+E_{4,y}+E_{5,y},
\end{align}
where
\begin{align}\label{eq:E_1y}		
E_{1,y}=&\sigma^4\|\theta_{0}\|_{2}\|\Phi^{T}V\|_{2}\|(\Phi^{T}\Phi)^{-1}\|_{F}(N\|(\Phi^{T}\Phi)^{-1}\|_{F}\|S^{-1}\|_{F}^2\nonumber\\
&+\|\Sigma^{-1}\|_{F}\|P^{-1}\|_{F}^2)\\
\label{eq:E_2y}		
E_{2,y}=&\sigma^4N\|(\Phi^{T}\Phi)^{-1}\|_{F}\left\|\frac{\Phi^{T}\Phi}{N}-\Sigma\right\|_{F}\|\Sigma^{-1}\|_{F}\|P^{-1}\|_{F}\nonumber\\
&(\|\theta_{0}\|_{2}^2\|S^{-1}\|_{F}+2\sqrt{r_{2}})\\
\label{eq:E_3y}
E_{3,y}=&\sigma^4\|(\Phi^{T}\Phi)^{-1}\|_{F}\|S^{-1}\|_{F}\nonumber\\
        &(\|\Phi^{T}V\|_{2}^{2}N\|(\Phi^{T}\Phi)^{-1}\|_{F}^2\|S^{-1}\|_{F}\nonumber\\
		&+\sigma^2\|\theta_{0}\|_{2}^2N\|(\Phi^{T}\Phi)^{-1}\|_{F}\|S^{-1}\|_{F}\|P^{-1}\|_{F}\nonumber\\
		&+\sigma^2\|\theta_{0}\|_{2}^2\|\Sigma^{-1}\|_{F}\|P^{-1}\|_{F}^2\nonumber\\
		&+2\sqrt{r_{2}}\sigma^2N\|(\Phi^{T}\Phi)^{-1}\|_{F}\|P^{-1}\|_{F})\\
\label{eq:E_4y}
E_{4,y}=&\sigma^6\|\theta_{0}\|_{2}\|\Phi^{T}V\|_{2}\|(\Phi^{T}\Phi)^{-1}\|_{F}^2\|S^{-1}\|_{F}\|P^{-1}\|_{F}\nonumber\\
		&(N\|(\Phi^{T}\Phi)^{-1}\|_{F}\|S^{-1}\|_{F}+\|\Sigma^{-1}\|_{F}\|P^{-1}\|_{F})\\
\label{eq:E_5y}
E_{5,y}=&\sigma^4\|\theta_{0}\|_{2}\|\Phi^{T}V\|_{2}N\|(\Phi^{T}\Phi)^{-1}\|_{F}^2\left\|\frac{\Phi^{T}\Phi}{N}-\Sigma\right\|_{F}\nonumber\\
        &\|\Sigma^{-1}\|_{F}\|S^{-1}\|_{F}\|P^{-1}\|_{F}
\end{align}
\begin{align}
r_{2}=&\rank(\Sigma^{-1}P^{-1}-N(\Phi^{T}\Phi)^{-1}S^{-1}).
\end{align}
Upper bounds of \eqref{eq:E_1y}, \eqref{eq:E_2y}, \eqref{eq:E_3y}, \eqref{eq:E_4y} and \eqref{eq:E_5y} are shown as follows, respectively,
\begin{align}
E_{1,y}\leq&\frac{1}{\sqrt{N}}n^2\sigma^4\|\theta_{0}\|_{2}\frac{N}{\lambda_{1}(\Phi^{T}\Phi)}\cond(\Phi^{T}\Phi) \frac{\|\Phi^{T}V\|_{2}}{\sqrt{N}}\nonumber\\		&\left(\frac{N}{\lambda_{1}(\Phi^{T}\Phi)}\frac{1}{\lambda_{1}^2(S)}\cond(\Phi^{T}\Phi){\cond}^2(S)\right.\nonumber\\
&\left.+\frac{1}{\lambda_{1}(\Sigma)}\frac{1}{\lambda_{1}^2(P)}\cond(\Sigma){\cond}^2(P)\right)\\
E_{2,y}\leq&\delta_{N}n^2\sigma^4\frac{N}{\lambda_{1}(\Phi^{T}\Phi)} \frac{\lambda_{n}({\Phi^{T}\Phi}/{N}-\Sigma)}{\delta_{N}}\frac{1}{\lambda_{1}(\Sigma)}\frac{1}{\lambda_{1}(P)}\nonumber\\		&\cond(\Phi^{T}\Phi)\cond(\frac{\Phi^{T}\Phi}{N}-\Sigma)\cond(\Sigma)\cond(P)\nonumber\\
&(\sqrt{n}\|\theta_{0}\|_{2}^{2}\frac{1}{\lambda_{1}(S)}\cond(S)+2\sqrt{r_{2}})\\
E_{3,y}\leq&\frac{1}{N}n^2\sigma^4\frac{N}{\lambda_{1}(\Phi^{T}\Phi)}\frac{1}{\lambda_{1}(S)}\cond(\Phi^{T}\Phi)\cond(S)\nonumber\\
&\left[\sqrt{n}\frac{\|\Phi^{T}V\|_{2}^2}{N}\left(\frac{N}{\lambda_{1}(\Phi^{T}\Phi)}\right)^2\right.\nonumber\\
&\left.\frac{1}{\lambda_{1}(S)}{\cond}^2(\Phi^{T}\Phi)\cond(S)\right.\nonumber\\	&+\sqrt{n}\sigma^2\|\theta_{0}\|_{2}^2\frac{N}{\lambda_{1}(\Phi^{T}\Phi)}\frac{1}{\lambda_{1}(S)}\frac{1}{\lambda_{1}(P)}\nonumber\\
&\cond(\Phi^{T}\Phi)\cond(S)\cond(P)\nonumber\\
&+\sqrt{n}\sigma^2\|\theta_{0}\|_{2}^2\frac{1}{\lambda_{1}(\Sigma)}\frac{1}{\lambda_{1}^2(P)}\cond(\Sigma){\cond}^2(P)\nonumber\\
&+\left.2\sqrt{r_{2}}\sigma^2\frac{N}{\lambda_{1}(\Phi^{T}\Phi)}\frac{1}{\lambda_{1}(P)}\cond(\Phi^{T}\Phi)\cond(P)\right]\\
E_{4,y}\leq&\frac{1}{N^{3/2}}\sigma^6n^3\|\theta_{0}\|_{2}\left(\frac{N}{\lambda_{1}(\Phi^{T}\Phi)}\right)^2\frac{1}{\lambda_{1}(S)}\frac{1}{\lambda_{1}(P)}\nonumber\\
&{\cond}^2(\Phi^{T}\Phi)\cond(S)\cond(P)\frac{\|\Phi^{T}V\|_{2}}{\sqrt{N}}\nonumber\\	&\left(\frac{N}{\lambda_{1}(\Phi^{T}\Phi)}\frac{1}{\lambda_{1}(S)}\cond(\Phi^{T}\Phi)\cond(S)\right.\nonumber\\
&+\left.\frac{1}{\lambda_{1}(\Sigma)}\frac{1}{\lambda_{1}(P)}\cond(\Sigma)\cond(P)\right)
\end{align}
\begin{align}
E_{5,y}\leq&\frac{\delta_{N}}{\sqrt{N}}\sigma^4n^3\|\theta_{0}\|_{2}\left(\frac{N}{\lambda_{1}(\Phi^{T}\Phi)}\right)^2\frac{\lambda_{n}({\Phi^{T}\Phi}/{N}-\Sigma)}{\delta_{N}}\nonumber\\
&\frac{1}{\lambda_{1}(\Sigma)}\frac{1}{\lambda_{1}(S)}\frac{1}{\lambda_{1}(P)}{\cond}^2(\Phi^{T}\Phi)\nonumber\\
&\cond(\frac{\Phi^{T}\Phi}{N}-\Sigma)\cond(\Sigma)\cond(S)\cond(P)\frac{\|\Phi^{T}V\|_{2}}{\sqrt{N}}.
\end{align}	}
\end{theorem}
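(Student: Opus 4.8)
The plan is to follow the template of Theorem~\ref{thm:convergence rate of Feb and Wb}, except that $\overline{\mathcal{F}_{\Sy}}$ and its limit $W_{y}$ now disagree along three independent axes instead of two, which is why the decomposition splits into five groups rather than three. First I would bring both objects into a comparable form. Writing $A=N(\Phi^{T}\Phi)^{-1}$ and using $N\,S^{-T}(\Phi^{T}\Phi)^{-1}S^{-1}=S^{-T}AS^{-1}$ together with $N\Tr((\Phi^{T}\Phi)^{-1}S^{-1})=\Tr(AS^{-1})$, one obtains
\begin{align}
\overline{\mathcal{F}_{\Sy}}=\sigma^{4}(\hat{\theta}^{\LS})^{T}S^{-T}AS^{-1}\hat{\theta}^{\LS}-2\sigma^{4}\Tr(AS^{-1}),
\end{align}
which is matched against $W_{y}=\sigma^{4}\theta_{0}^{T}P^{-T}\Sigma^{-1}P^{-1}\theta_{0}-2\sigma^{4}\Tr(\Sigma^{-1}P^{-1})$. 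The difference thus separates into a quadratic-form part and a trace part, and in each of them I must resolve the three mismatches $\hat{\theta}^{\LS}\leftrightarrow\theta_{0}$, the central factor $A\leftrightarrow\Sigma^{-1}$, and the outer factors $S^{-1}\leftrightarrow P^{-1}$.

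The engine of the proof is three exact identities, one per axis: $\hat{\theta}^{\LS}-\theta_{0}=(\Phi^{T}\Phi)^{-1}\Phi^{T}V$; the resolvent identity $S^{-1}-P^{-1}=-\sigma^{2}S^{-1}(\Phi^{T}\Phi)^{-1}P^{-1}$, obtained from $S-P=\sigma^{2}(\Phi^{T}\Phi)^{-1}$; and $A-\Sigma^{-1}=-N(\Phi^{T}\Phi)^{-1}(\Phi^{T}\Phi/N-\Sigma)\Sigma^{-1}$. Each exposes precisely the small quantity driving its group: an extra $\|(\Phi^{T}\Phi)^{-1}\|_{F}$ and a factor $\|\Phi^{T}V\|_{2}$ for the first, an extra $\sigma^{2}\|(\Phi^{T}\Phi)^{-1}\|_{F}$ for the second, and the factor $\|\Phi^{T}\Phi/N-\Sigma\|_{F}$ for the third. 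For the quadratic part I would telescope the five-factor product asymmetrically: change the right vector $\hat{\theta}^{\LS}\to\theta_{0}$ first, then sweep the matrix sandwich $S^{-T}AS^{-1}\to P^{-T}\Sigma^{-1}P^{-1}$ one factor at a time (right outer, then center, then left outer) while the left vector is still $\hat{\theta}^{\LS}$, and change the left vector $\hat{\theta}^{\LS}\to\theta_{0}$ last. Writing $\hat{\theta}^{\LS}=\theta_{0}+(\Phi^{T}\Phi)^{-1}\Phi^{T}V$, the first and last steps yield the $\theta_{0}$--$\Phi^{T}V$ cross terms (evaluated at the finite-sample sandwich $S^{-T}AS^{-1}$ and at the limiting sandwich $P^{-T}\Sigma^{-1}P^{-1}$, respectively) together with a pure $\Phi^{T}V$-quadratic term, whereas the residual $(\Phi^{T}\Phi)^{-1}\Phi^{T}V$ riding on the left during the three matrix steps is exactly what produces the second-order groups.

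For the trace part I would write $\Tr(\Sigma^{-1}P^{-1})-\Tr(AS^{-1})=\Tr\big((\Sigma^{-1}-A)P^{-1}\big)+\Tr\big(A(P^{-1}-S^{-1})\big)$, an ordering chosen so that no mixed $(A-\Sigma^{-1})\times(S^{-1}-P^{-1})$ term appears, and bound each summand with the rank--trace inequality $|\Tr(M)|\le\sqrt{\rank(M)}\,\|M\|_{F}$, which is the source of the $\sqrt{r_{2}}$ factors and of the definition $r_{2}=\rank(\Sigma^{-1}P^{-1}-N(\Phi^{T}\Phi)^{-1}S^{-1})$. Bounding every piece by the triangle inequality, submultiplicativity of $\|\cdot\|_{F}$, and $|x^{T}My|\le\|x\|_{2}\|M\|_{F}\|y\|_{2}$, I would then collect the pieces into the five advertised groups by which difference sources they carry: $E_{1,y}$ the first-order cross terms, $E_{2,y}$ the pure $A\to\Sigma^{-1}$ replacement, $E_{3,y}$ the $S^{-1}\to P^{-1}$ replacements in the $\theta_{0}$-quadratic form and in the trace plus the $\Phi^{T}V$-quadratic term, and $E_{4,y}$, $E_{5,y}$ the two second-order products $(\hat{\theta}^{\LS}\!-\!\theta_{0})\times(S^{-1}\!-\!P^{-1})$ and $(\hat{\theta}^{\LS}\!-\!\theta_{0})\times(A\!-\!\Sigma^{-1})$.

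Finally, to obtain the displayed condition-number bounds I would feed each $E_{i,y}$ into Corollary~\ref{corollary:upper bounds of building blocks}, replacing each of $\|(\Phi^{T}\Phi)^{-1}\|_{F}$, $\|S^{-1}\|_{F}$, $\|\Sigma^{-1}\|_{F}$, $\|P^{-1}\|_{F}$ by $\sqrt{n}$ times the matching ratio $\cond(\cdot)/\lambda_{1}(\cdot)$, absorbing stray factors of $N$ into $N/\lambda_{1}(\Phi^{T}\Phi)$, and factoring out the $O_{p}(1)$ normalizations $\|\Phi^{T}V\|_{2}/\sqrt{N}$ and $\lambda_{n}(\Phi^{T}\Phi/N-\Sigma)/\delta_{N}$ in the spirit of Remark~\ref{rmk:note for Op()}. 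The main obstacle is not any individual estimate but the combinatorial bookkeeping of the three-axis telescoping: arranging the order of replacements so that the cross terms collapse into exactly five groups and no $(A-\Sigma^{-1})\times(S^{-1}-P^{-1})$ product survives, and tracking the powers of $N$ and $\sigma^{2}$ so that the building blocks reproduce the claimed scale factors --- in particular the single extra power of $\cond(\Phi^{T}\Phi)$ that makes the $\SURE_{y}$ bound larger than the $\EB$ bound of Theorem~\ref{thm:convergence rate of Feb and Wb}.
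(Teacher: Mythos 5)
Your proposal is correct and follows essentially the same route as the paper: the same split of $\overline{\mathcal{F}_{\Sy}}-W_{y}$ into a quadratic-form part and a trace part, the same five-term telescoping of the sandwich $(\hat{\theta}^{\LS},S^{-1},N(\Phi^{T}\Phi)^{-1},S^{-1},\hat{\theta}^{\LS})$ driven by the three exact identities, the same mixed-term-free trace decomposition $(\Sigma^{-1}-A)P^{-1}+A(P^{-1}-S^{-1})$ with the rank--trace inequality producing $\sqrt{r_{2}}$, and the same final appeal to Corollary~\ref{corollary:upper bounds of building blocks}. The only deviation is that you telescope in the mirror order (right vector first, left vector last) whereas the paper starts from the left, which is immaterial by symmetry of the matrices involved.
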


\begin{remark}
{\it If $\{\phi(t)\}_{t=1}^{N}$ are assumed to be independent and normally distributed with zero mean, covariance matrix $\Sigma$ and finite fourth moment, we can derive that $\delta_{N}=1/\sqrt{N}$ using the Central Limit Theorem (CLT).}
\end{remark}

The comparison between upper bounds of $|\overline{\mathcal{F}_{\EB}}-W_{b}|$ and $|\overline{\mathcal{F}_{\Sy}}-W_{y}|$ with respect to the powers of $\cond(\Phi^{T}\Phi)$ and $\cond(P)$ is summarized in the following table.

\begin{table}[!htp]
	\centering
	\caption{Upper bounds of $|\overline{\mathcal{F}_{\EB}}-W_{b}|$ and $|\overline{\mathcal{F}_{\Sy}}-W_{y}|$}
	\label{table: comparison of Feb, Wb and Fsy,Wy}
	\begin{tabular}{cccc}
		\hline
		\multicolumn{4}{c}{$|\overline{\mathcal{F}_{\EB}}-W_{b}|$}  \\
		\hline
		\makecell[c]{\text{boundedness}\\ {\text {in}}\\ {\text{probability}}} & \text{term} &  \makecell[c]{\text{maximum power}\\ \text{of}\\ $\cond(\Phi^{T}\Phi)$}& \makecell[c]{\text{maximum power}\\ \text{of}\\ $\cond(P)$}\\
		\hline	
		 ${1}/{\sqrt{N}}$ & $E_{1,b}$ & 1&1  \\
		 ${1}/{N}$ & $E_{2,b}$ & 2 &1 \\
		 ${1}/{N^{3/2}}$ & $E_{3,b}$ & 2&1 \\
		\hline
        \hline
         \multicolumn{4}{c}{$|\overline{\mathcal{F}_{\Sy}}-W_{y}|$}\\
        \hline
        \makecell[c]{\text{boundedness}\\ \text{in}\\  \text{probability}} & \text{term} & \makecell[c]{\text{maximum power}\\ \text{of}\\ $\cond(\Phi^{T}\Phi)$}& \makecell[c]{\text{maximum power}\\ \text{of}\\ $\cond(P)$}\\
        \hline
        ${1}/{\sqrt{N}}$ & $E_{1,y}$  & 2&2\\
        ${1}/{N}$ & $E_{3,y}$  & 3&2\\
        ${1}/{N^{3/2}}$ & $E_{4,y}$  & 3&2\\
        $\delta_{N}$& $E_{2,y}$  & 2&1\\
        ${\delta_{N}}/{\sqrt{N}}$& $E_{5,y}$  & 3&1\\
        \hline
	\end{tabular}
\end{table}

\begin{remark}
	{\it Since as $N\to\infty$, $(\Phi^{T}\Phi)/N \overset{a.s.}\to \Sigma$, it can be seen that
	\begin{align}
	\cond(\Phi^{T}\Phi)=\cond\left(\frac{\Phi^{T}\Phi}{N}\right) \overset{a.s.}\to \cond(\Sigma),
	\end{align}
    which means that for any $\overline{\epsilon}>0$, $\exists\ \overline{N}>0$, then for all $N>\overline{N}$
    \begin{align}
    &|\cond(\Phi^{T}\Phi)-\cond(\Sigma)|<\overline{\epsilon}\ \text{almost}\ \text{surely}.
    %\Leftrightarrow& \cond(\Phi^{T}\Phi)-\epsilon < \cond(\Sigma)<\cond(\Phi^{T}\Phi)-\epsilon\ \text{almost}\ \text{surely}.
    \end{align}
    Then in Table \ref{table: comparison of Feb, Wb and Fsy,Wy}, for the term like $\cond(\Phi^{T}\Phi)\cond(\Sigma)$, its greatest power of $\cond(\Phi^{T}\Phi)$ is regarded as $2$.
}
\end{remark}

As shown in Table \ref{table: comparison of Feb, Wb and Fsy,Wy}, comparing the upper bounds of $|\overline{\mathcal{F}_{\EB}}-W_{b}|$ and $|\overline{\mathcal{F}_{\Sy}}-W_{y}|$, the greatest power of $\cond(\Phi^{T}\Phi)$ of $|\overline{\mathcal{F}_{\Sy}}-W_{y}|$ is always one larger than that of $|\overline{\mathcal{F}_{\EB}}-W_{b}|$, with regard to each term with the same boundedness in probability. At the same time, ill-conditioned $\Phi^{T}\Phi$ may usually result in large $\cond(P)$. Table \ref{table: comparison of Feb, Wb and Fsy,Wy} also shows that the greatest power of $\cond(P)$ in each term of $|\overline{\mathcal{F}_{\Sy}}-W_{y}|$ upper bound is one larger than that of $|\overline{\mathcal{F}_{\EB}}-W_{b}|$ upper bound, correspondingly. Thus, the large $\cond(\Phi^{T}\Phi)$ may lead to far slower convergence rate of $\overline{\mathcal{F}_{\Sy}}$ to $W_{y}$ than that of $\overline{\mathcal{F}_{\EB}}$ to $W_{b}$. It also inspires us to continue the study of the effects of large $\cond(\Phi^{T}\Phi)$ on the comparison between the convergence rate of $\hat{\eta}_{\EB}$ to $\eta_{b}^{*}$ and that of $\hat{\eta}_{\Sy}$ to $\eta_{y}^{*}$.

\section{Effects of $\cond(\Phi^{T}\Phi)$ on the Convergence Rates of Hyper-parameter Estimators of $\EB$ and $\SURE_{y}$}

 In this section, we show the asymptotic normality of $\hat{\eta}_{\EB}-\eta^{*}_{b}$ and $\hat{\eta}_{\Sy}-\eta^{*}_{y}$. Here we define that the random sequence $\{\xi_{N}\}$ converges in distribution to a random variable $\xi$ with cumulative density function (CDF) $F(\xi)$ if $\lim_{N\to\infty}|F_{N}(\xi_{N})-F(\xi)|=0$, which can be written as $\xi_{N} \overset{d}\to \xi$.

\begin{assumption}\label{asp:interior points eta_eb_star eta_sy_star}
{\it Let $\Omega$ be an open subset of the Euclidean $p$-space, which means that $\eta^{*}_{b}$ and $\eta^{*}_{y}$ are interior points of $\Omega$.}
\end{assumption}

\begin{theorem}\label{thm:asymptotic normality of eta_eb difference}
{\it Assume that the noise is Gaussian distributed, i.e. $V\sim\mathcal{N}({\0},\sigma^2I_{N})$. Under Assumption \ref{asp:1}, \ref{asp:almost sure convergence of PP to Sigma}, \ref{asp:2} and \ref{asp:interior points eta_eb_star eta_sy_star}, as $N\to\infty$, we have
\begin{align}
\sqrt{N}(\hat{\eta}_{\EB}-\eta^{*}_{b})\overset{d}\to&\mathcal{N}({\0},A_{b}(\eta^{*}_{b})^{-1}B_{b}(\eta^{*}_{b})A_{b}(\eta^{*}_{b})^{-1}),
\end{align}
where the $(k,l)$th elements of $A_{b}(\eta^{*}_{b})$ and $B_{b}(\eta^{*}_{b})$ can be represented as follows, respectively,
\begin{align}\label{eq:element of A eta_b_star}
A_{b}(\eta^{*}_{b})_{k,l}=&\left\{\theta_{0}^{T}\frac{\partial^2 P^{-1}}{\partial\eta_{k}\partial\eta_{l}}\theta_{0}
                     +{\Tr}\left(\frac{\partial P^{-1}}{\partial \eta_{l}}\frac{\partial P}{\partial \eta_{k}}\right)\right.\nonumber\\
                     &+\left.\left.\Tr\left(P^{-1}\frac{\partial^2 P}{\partial\eta_{k}\partial\eta_{l}} \right)\right\}\right|_{\eta^{*}_{b}}\\
\label{eq:element of B eta_b_star}
B_{b}(\eta^{*}_{b})_{k,l}=&4\sigma^2\left.\left\{\theta_{0}^{T}\frac{\partial P^{-1}}{\partial \eta_{k}}\Sigma^{-1}\frac{\partial P^{-1}}{\partial \eta_{l}}\theta_{0}\right\}\right|_{\eta^{*}_{b}}.
\end{align}
}
\end{theorem}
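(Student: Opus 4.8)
The plan is to treat $\hat{\eta}_{\EB}$ as an M-estimator and carry out the standard ``score plus Hessian'' asymptotic expansion, exploiting the already-established facts that $\overline{\mathcal{F}_{\EB}}\overset{a.s.}\to W_{b}$ and $\hat{\eta}_{\EB}\overset{a.s.}\to\eta^{*}_{b}$. First I would observe that the extra terms distinguishing $\overline{\mathcal{F}_{\EB}}$ from $\mathcal{F}_{\EB}$ do not depend on $\eta$, so $\hat{\eta}_{\EB}=\argmin_{\eta\in\Omega}\overline{\mathcal{F}_{\EB}}(\eta)$ and I may work throughout with $\overline{\mathcal{F}_{\EB}}=(\hat{\theta}^{\LS})^{T}S^{-1}\hat{\theta}^{\LS}+\log\det(S)$, $S=P+\sigma^{2}(\Phi^{T}\Phi)^{-1}$. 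Since $\eta^{*}_{b}$ is an interior point (Assumption \ref{asp:interior points eta_eb_star eta_sy_star}) and $\hat{\eta}_{\EB}\overset{a.s.}\to\eta^{*}_{b}$, for $N$ large the stationarity condition $\nabla_{\eta}\overline{\mathcal{F}_{\EB}}(\hat{\eta}_{\EB})=0$ holds; a component-wise mean-value expansion about $\eta^{*}_{b}$ then gives $0=\nabla_{\eta}\overline{\mathcal{F}_{\EB}}(\eta^{*}_{b})+H_{N}(\tilde{\eta})(\hat{\eta}_{\EB}-\eta^{*}_{b})$, where $H_{N}$ is the Hessian of $\overline{\mathcal{F}_{\EB}}$ and each row is evaluated at an intermediate point $\tilde{\eta}$ lying between $\hat{\eta}_{\EB}$ and $\eta^{*}_{b}$. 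Rearranging yields $\sqrt{N}(\hat{\eta}_{\EB}-\eta^{*}_{b})=-H_{N}(\tilde{\eta})^{-1}\sqrt{N}\nabla_{\eta}\overline{\mathcal{F}_{\EB}}(\eta^{*}_{b})$, so the result will follow from a Slutsky argument once I control the two factors.

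For the Hessian factor I would use $\tilde{\eta}\overset{a.s.}\to\eta^{*}_{b}$ together with $S\overset{a.s.}\to P$ (as $\sigma^{2}(\Phi^{T}\Phi)^{-1}=O_{p}(1/N)$) and $\hat{\theta}^{\LS}\overset{a.s.}\to\theta_{0}$ to show, by differentiating $\overline{\mathcal{F}_{\EB}}$ twice and passing to the limit, that $H_{N}(\tilde{\eta})\overset{a.s.}\to A_{b}(\eta^{*}_{b})$, the Hessian of $W_{b}$ at $\eta^{*}_{b}$; the resulting traces and quadratic form reproduce exactly \eqref{eq:element of A eta_b_star}. For the score factor I would substitute $\hat{\theta}^{\LS}=\theta_{0}+(\Phi^{T}\Phi)^{-1}\Phi^{T}V$ into $\partial\overline{\mathcal{F}_{\EB}}/\partial\eta_{k}=(\hat{\theta}^{\LS})^{T}(\partial S^{-1}/\partial\eta_{k})\hat{\theta}^{\LS}+\Tr(S^{-1}\,\partial P/\partial\eta_{k})$ and expand. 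The purely deterministic part equals $\theta_{0}^{T}(\partial P^{-1}/\partial\eta_{k})\theta_{0}+\Tr(P^{-1}\,\partial P/\partial\eta_{k})$ up to an $O_{p}(1/N)$ correction, and this leading term is precisely the gradient of $W_{b}$ at $\eta^{*}_{b}$, which vanishes; hence it contributes $O_{p}(1/N)$ and is annihilated after multiplication by $\sqrt{N}$. The quadratic-in-noise term $(\Phi^{T}V)^{T}(\Phi^{T}\Phi)^{-1}(\partial S^{-1}/\partial\eta_{k})(\Phi^{T}\Phi)^{-1}\Phi^{T}V$ is $O_{p}(1/N)$ and likewise negligible at scale $1/\sqrt{N}$.

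The surviving stochastic contribution is the cross term $2\theta_{0}^{T}(\partial S^{-1}/\partial\eta_{k})(\Phi^{T}\Phi)^{-1}\Phi^{T}V$, so that, at $\eta^{*}_{b}$, $\sqrt{N}\,\partial\overline{\mathcal{F}_{\EB}}/\partial\eta_{k}=2\theta_{0}^{T}(\partial S^{-1}/\partial\eta_{k})\sqrt{N}(\Phi^{T}\Phi)^{-1}\Phi^{T}V+o_{p}(1)$. Writing $\sqrt{N}(\Phi^{T}\Phi)^{-1}\Phi^{T}V=(\Phi^{T}\Phi/N)^{-1}(\Phi^{T}V/\sqrt{N})$ and invoking $\Phi^{T}\Phi/N\overset{a.s.}\to\Sigma$, the Gaussian noise assumption makes $\Phi^{T}V/\sqrt{N}$ conditionally $\mathcal{N}(\0,\sigma^{2}\Phi^{T}\Phi/N)$, whence $\Phi^{T}V/\sqrt{N}\overset{d}\to\mathcal{N}(\0,\sigma^{2}\Sigma)$ and, by Slutsky, $\sqrt{N}(\Phi^{T}\Phi)^{-1}\Phi^{T}V\overset{d}\to\mathcal{N}(\0,\sigma^{2}\Sigma^{-1})$. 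Stacking over $k$ through the Cram\'er--Wold device and using $\partial S^{-1}/\partial\eta_{k}\to\partial P^{-1}/\partial\eta_{k}$, the score is asymptotically $\mathcal{N}(\0,B_{b}(\eta^{*}_{b}))$ with $B_{b}(\eta^{*}_{b})_{k,l}=4\sigma^{2}\theta_{0}^{T}(\partial P^{-1}/\partial\eta_{k})\Sigma^{-1}(\partial P^{-1}/\partial\eta_{l})\theta_{0}$, matching \eqref{eq:element of B eta_b_star}. Combining the two limits through Slutsky's theorem and the symmetry of $A_{b}(\eta^{*}_{b})$ delivers $\sqrt{N}(\hat{\eta}_{\EB}-\eta^{*}_{b})\overset{d}\to\mathcal{N}(\0,A_{b}(\eta^{*}_{b})^{-1}B_{b}(\eta^{*}_{b})A_{b}(\eta^{*}_{b})^{-1})$. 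The main obstacle will be the bookkeeping that isolates the single $1/\sqrt{N}$-order cross term while rigorously bounding every other term as $o_{p}(1/\sqrt{N})$ and justifying the interchange of limit and differentiation in the Hessian step; the joint CLT itself is comparatively clean precisely because the noise is assumed Gaussian.
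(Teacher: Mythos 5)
Your proposal is correct, and mathematically it is the same argument as the paper's; the difference is packaging rather than substance. The paper routes everything through its Lemma~\ref{lemma:asymptotic normality of a consistent root} (Amemiya's Theorem 4.1.3) with $M_{N}=N\overline{\mathcal{F}_{\EB}}$: its proof consists entirely of verifying that lemma's hypotheses --- measurability and smoothness, uniform convergence in probability of $\overline{\mathcal{F}_{\EB}}$ to $W_{b}$ near $\eta^{*}_{b}$ with a strict local minimum there, Hessian convergence along arbitrary consistent sequences, and the score CLT --- rather than performing the mean-value expansion itself. Your expansion reproduces that lemma's proof inline, and your term-by-term analysis coincides with the paper's: your ``deterministic part plus quadratic-in-noise part'' is its $\Upsilon_{1,b}$, killed at scale $\sqrt{N}$ via the $O_{p}(1/N)$ rates for $S^{-1}-P^{-1}$ and its derivatives (its Lemma~\ref{lemma:Op and convergence in probability}); your surviving cross term is its $\Upsilon_{2,b}$; and your CLT step $\sqrt{N}(\Phi^{T}\Phi)^{-1}\Phi^{T}V\overset{d}\to\mathcal{N}(\0,\sigma^{2}\Sigma^{-1})$ is exactly its \eqref{eq:limiting distribution of sqrtN_theta_LS_theta0}. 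What the citation buys the paper is precisely the bookkeeping you defer at the end: controlling $H_{N}(\tilde{\eta})$ at the random intermediate point requires \emph{uniform} (not pointwise) convergence of the Hessian on a neighbourhood of $\eta^{*}_{b}$ together with a continuity argument (the paper's Lemma~\ref{lemma:convergence in probability}), and stationarity at $\hat{\eta}_{\EB}$ holds only with probability tending to one; these are hypotheses 3--5 of the cited lemma, and most of the paper's proof is spent establishing them. Conversely, your formulation avoids one step the lemma forces on the paper: since the lemma defines $A_{b}$ and $B_{b}$ as limits of expectations, the paper must additionally evaluate Gaussian quadratic-form moments (its Lemma~\ref{lemma:mean of quartic forms}) to identify those expectations with the Hessian of $W_{b}$ and with \eqref{eq:element of B eta_b_star}, whereas you define the limits directly and skip this. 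There is no gap of substance in your plan, only deferred verification of the uniformity conditions.
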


\begin{theorem}\label{thm:asymptotic normality of eta_sy difference}
{\it In addition to Assumption \ref{asp:1}, \ref{asp:almost sure convergence of PP to Sigma}, \ref{asp:2}, \ref{asp:boundedness in probability with deltaN}, \ref{asp:interior points eta_eb_star eta_sy_star} and the Gaussian noise assumption, we further suppose that $\delta_{N}=o(1/\sqrt{N})$, which means that $\delta_{N}$ is an infinitesimal of higher order than $1/\sqrt{N}$ as $N\to\infty$. (In particular, if $\delta_{N}$ is represented as $N^{k}$, $k$ should be smaller than $-1/2$.) As $N\to\infty$, we have
\begin{align}
\sqrt{N}(\hat{\eta}_{\Sy}-\eta^{*}_{y})\overset{d}\to&\mathcal{N}({\0},C_{y}(\eta^{*}_{y})^{-1}D_{y}(\eta^{*}_{y})C_{y}(\eta^{*}_{y})^{-1}),
\end{align}
where the $(k,l)$th elements of $C_{y}(\eta^{*}_{y})$ and $D_{y}(\eta^{*}_{y})$ can be represented as follows, respectively,
\begin{align}
\label{eq:element of C eta_y_star}
C_{y}(\eta^{*}_{y})_{k,l}
=&2\sigma^4\left\{\theta_{0}^{T}\frac{\partial P^{-1}}{\partial\eta_{l}}{\Sigma}^{-1}\frac{\partial P^{-1}}{\partial\eta_{k}}\theta_{0}
+\theta_{0}^{T}P^{-1}{\Sigma}^{-1}\frac{\partial^2 P^{-1}}{\partial\eta_{k}\partial\eta_{l}}\theta_{0}\right.\nonumber\\
&-\left.\left.\Tr\left({\Sigma}^{-1}\frac{\partial^2 P^{-1}}{\partial\eta_{k}\partial\eta_{l}}\right)\right\}\right|_{\eta^{*}_{y}}\\
\label{eq:element of D eta_y_star}
D_{y}(\eta^{*}_{y})_{k,l}
=&4\sigma^{10}\left\{\theta_{0}^{T}\left[P^{-1}{\Sigma}^{-1}\frac{\partial P^{-1}}{\partial\eta_{k}} + \frac{\partial P^{-1}}{\partial\eta_{k}}{\Sigma}^{-1}P^{-1} \right]{\Sigma}^{-1}\right.\nonumber\\
&\left.\left.\left[P^{-1}{\Sigma}^{-1}\frac{\partial P^{-1}}{\partial\eta_{l}} + \frac{\partial P^{-1}}{\partial\eta_{l}}{\Sigma}^{-1}P^{-1} \right]\theta_{0}\right\}\right|_{\eta^{*}_{y}}.
\end{align}
}
\end{theorem}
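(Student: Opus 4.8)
The plan is to treat $\hat{\eta}_{\Sy}$ as an M-estimator and invoke the standard sandwich-formula argument, mirroring the proof of Theorem \ref{thm:asymptotic normality of eta_eb difference}. First I would observe that the terms added to $\mathcal{F}_{\Sy}$ to form $\overline{\mathcal{F}_{\Sy}}$ do not depend on $\eta$, so $\hat{\eta}_{\Sy}=\argmin_{\eta\in\Omega}\overline{\mathcal{F}_{\Sy}}(\eta)$. By Assumption \ref{asp:interior points eta_eb_star eta_sy_star} both $\hat{\eta}_{\Sy}$ and $\eta^{*}_{y}$ are interior minimizers of smooth objectives, so the first-order conditions $\nabla_{\eta}\overline{\mathcal{F}_{\Sy}}(\hat{\eta}_{\Sy})={\0}$ and $\nabla_{\eta}W_{y}(\eta^{*}_{y})={\0}$ hold. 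A componentwise mean-value expansion of the score about $\eta^{*}_{y}$ then gives
\begin{align}
{\0}=\nabla_{\eta}\overline{\mathcal{F}_{\Sy}}(\hat{\eta}_{\Sy})=\nabla_{\eta}\overline{\mathcal{F}_{\Sy}}(\eta^{*}_{y})+H_{N}(\bar{\eta})(\hat{\eta}_{\Sy}-\eta^{*}_{y}),
\end{align}
where $H_{N}(\bar{\eta})$ is the Hessian of $\overline{\mathcal{F}_{\Sy}}$ (with a possibly row-dependent intermediate point $\bar{\eta}$, all lying on the segment joining $\hat{\eta}_{\Sy}$ and $\eta^{*}_{y}$), whence $\sqrt{N}(\hat{\eta}_{\Sy}-\eta^{*}_{y})=-[H_{N}(\bar{\eta})]^{-1}\sqrt{N}\,\nabla_{\eta}\overline{\mathcal{F}_{\Sy}}(\eta^{*}_{y})$.

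Second I would establish the two limits that feed Slutsky's theorem. For the Hessian, the almost sure consistency $\hat{\eta}_{\Sy}\overset{a.s.}\to\eta^{*}_{y}$ recalled from \cite{MCL2018} forces $\bar{\eta}\overset{a.s.}\to\eta^{*}_{y}$; combined with $\overline{\mathcal{F}_{\Sy}}\overset{a.s.}\to W_{y}$, $(\Phi^{T}\Phi)/N\overset{a.s.}\to\Sigma$ and $S\to P$, locally uniform convergence of the second derivatives yields $H_{N}(\bar{\eta})\overset{a.s.}\to\nabla^{2}_{\eta}W_{y}(\eta^{*}_{y})$. A direct differentiation of $W_{y}$ (using $\partial_{\eta_{k}}P^{-1}=-P^{-1}(\partial_{\eta_{k}}P)P^{-1}$ and the symmetry of the scalar quadratic forms) confirms $\nabla^{2}_{\eta}W_{y}(\eta^{*}_{y})=C_{y}(\eta^{*}_{y})$, which is precisely \eqref{eq:element of C eta_y_star}.

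Third, and this is the main probabilistic step, I would derive the CLT for the score. Substituting $\hat{\theta}^{\LS}=\theta_{0}+(\Phi^{T}\Phi)^{-1}\Phi^{T}V$ into $\nabla_{\eta}\overline{\mathcal{F}_{\Sy}}(\eta^{*}_{y})$ and expanding the quadratic form in $\hat{\theta}^{\LS}$ produces a deterministic-in-$V$ piece that approximates $\nabla_{\eta}W_{y}(\eta^{*}_{y})={\0}$ up to the sampling fluctuation of $\Phi^{T}\Phi$, a term linear in $\epsilon_{N}=(\Phi^{T}\Phi)^{-1}\Phi^{T}V$, and a quadratic remainder $\epsilon_{N}^{T}(\cdot)\epsilon_{N}$. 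After scaling by $\sqrt{N}$ only the linear term survives; writing $\sqrt{N}\epsilon_{N}=((\Phi^{T}\Phi)/N)^{-1}(\Phi^{T}V/\sqrt{N})$ and using the Gaussian noise assumption together with $(\Phi^{T}\Phi)/N\overset{a.s.}\to\Sigma$, the CLT gives $\Phi^{T}V/\sqrt{N}\overset{d}\to\mathcal{N}({\0},\sigma^{2}\Sigma)$ and hence $\sqrt{N}\epsilon_{N}\overset{d}\to\mathcal{N}({\0},\sigma^{2}\Sigma^{-1})$. Replacing the random matrices by their limits ($S^{-1}\to P^{-1}$ and $N(\Phi^{T}\Phi)^{-1}\to\Sigma^{-1}$) shows $\sqrt{N}\,\nabla_{\eta}\overline{\mathcal{F}_{\Sy}}(\eta^{*}_{y})\overset{d}\to\mathcal{N}({\0},D_{y}(\eta^{*}_{y}))$, with $D_{y}$ emerging as the covariance $\sigma^{2}(\cdot)^{T}\Sigma^{-1}(\cdot)$ of a fixed linear image of $\sqrt{N}\epsilon_{N}$; matching the entries to \eqref{eq:element of D eta_y_star} is a bookkeeping check. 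Combining the two limits through Slutsky's theorem delivers $\sqrt{N}(\hat{\eta}_{\Sy}-\eta^{*}_{y})\overset{d}\to\mathcal{N}({\0},C_{y}(\eta^{*}_{y})^{-1}D_{y}(\eta^{*}_{y})C_{y}(\eta^{*}_{y})^{-1})$.

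The hard part will be controlling the error terms at the correct order so that only the $\epsilon_{N}$-linear contribution survives the $\sqrt{N}$ scaling. Concretely I must show that the quadratic remainder $\epsilon_{N}^{T}(\cdot)\epsilon_{N}$, the gap $S^{-1}-P^{-1}=O(1/N)$, and above all the sampling fluctuation $(\Phi^{T}\Phi)/N-\Sigma$ contribute terms that are $o_{p}(1)$ after multiplication by $\sqrt{N}$. The last of these is exactly why the extra hypothesis $\delta_{N}=o(1/\sqrt{N})$ is imposed: by Assumption \ref{asp:boundedness in probability with deltaN} this fluctuation is $O_{p}(\delta_{N})$, so $\sqrt{N}\,O_{p}(\delta_{N})=o_{p}(1)$, which decouples the randomness of $\Phi$ from that of $V$ and leaves the Gaussian noise as the sole source of the limiting variance.
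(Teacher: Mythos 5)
Your proposal is correct and takes essentially the same route as the paper: the paper simply packages your mean-value/sandwich argument into a consistent-root asymptotic-normality lemma (Lemma \ref{lemma:asymptotic normality of a consistent root}, Amemiya) and verifies its assumptions, with your split of the score into a fluctuation part killed by $\delta_{N}=o(1/\sqrt{N})$ and a part linear in $\hat{\theta}^{\LS}-\theta_{0}$ carrying the CLT corresponding exactly to the paper's decomposition into $\Upsilon_{1,y}$ and $\Upsilon_{2,y}$, and your Hessian limit matching the paper's verification that $\frac{\partial^2 \overline{\mathcal{F}_{\Sy}}}{\partial \eta\partial\eta^{T}}$ converges to $C_{y}(\eta^{*}_{y})$ at any consistent sequence. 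The only cosmetic difference is that you inline the proof of that lemma rather than citing it, and you skip the paper's bookkeeping step identifying $C_{y}$ and $D_{y}$ with $\plim$'s of expectations, which your direct argument does not require.
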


Under the assumption that $\delta_{N}=o(1/\sqrt{N})$ as $N\to\infty$, we know from \eqref{eq:convergence rate of EB and SUREy estimators} that the convergence rates of $\EB$ and $\SURE_{y}$ hyper-parameter estimators have the same boundedness in probability $1/\sqrt{N}$ but with different scaling coefficients, which depend on their respective asymptotic covariance matrices. Since it may be hard to shed light on the comparison between asymptotic covariance matrices of $\hat{\eta}_{\EB}-\eta_{b}^{*}$ and $\hat{\eta}_{\Sy}-\eta_{y}^{*}$ straightforwardly, we make an attempt with the ridge regression case.

\begin{corollary}\label{corollary:ridge regression case for asymptotic normality}
{\it Suppose that $P=\eta I_{n}$ and $n\geq 2$, where $\eta\in\R$. Then under assumptions of Theorem \ref{thm:asymptotic normality of eta_eb difference} and \ref{thm:asymptotic normality of eta_sy difference}, as $N\to\infty$, we have
\begin{align}\label{eq:asymptotic normality of eta_eb for the ridge regression case}
\sqrt{N}(\hat{\eta}_{\EB}-\eta^{*}_{b})\overset{d}\to&\mathcal{N}(0,\frac{4\sigma^2}{n^2}\theta_{0}^{T}\Sigma^{-1}\theta_{0})\\
\label{eq:asymptotic normality of eta_sy for the ridge regression case}
\sqrt{N}(\hat{\eta}_{\Sy}-\eta^{*}_{y})\overset{d}\to&\mathcal{N}(0,\frac{4\sigma^2}{\Tr^2(\Sigma^{-1})}\theta_{0}^{T}\Sigma^{-3}\theta_{0}).
\end{align}
As $\lambda_{n}(\Sigma)\to 0$ and other eigenvalues $\lambda_{i}(\Sigma)$ with $i=1,\cdots,n-1$ are fixed, which leads to $\cond(\Sigma)\to\infty$, the ratio of two limiting variances in \eqref{eq:asymptotic normality of eta_eb for the ridge regression case} and \eqref{eq:asymptotic normality of eta_sy for the ridge regression case} tends to be $1/n^2$, i.e.
\begin{align}
\frac{\theta_{0}^{T}\Sigma^{-1}\theta_{0}/{n^2}}{\theta_{0}^{T}\Sigma^{-3}\theta_{0}/{\Tr^2(\Sigma^{-1})}}\to\frac{1}{n^2}.
\end{align}
}
\end{corollary}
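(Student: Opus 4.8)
\emph{Proof proposal.} The plan is to specialize Theorems \ref{thm:asymptotic normality of eta_eb difference} and \ref{thm:asymptotic normality of eta_sy difference} to the scalar hyper-parameter $P=\eta I_n$ and evaluate the resulting (now one-dimensional) asymptotic variances at the limit points $\eta_b^*$ and $\eta_y^*$. First I would record the elementary derivatives: since $P=\eta I_n$, we have $P^{-1}=\eta^{-1}I_n$, $\partial P^{-1}/\partial\eta=-\eta^{-2}I_n$, $\partial^2 P^{-1}/\partial\eta^2=2\eta^{-3}I_n$, $\partial P/\partial\eta=I_n$ and $\partial^2 P/\partial\eta^2=0$. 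Because $\eta$ is scalar, $A_b,B_b,C_y,D_y$ each collapse to a single number, so the sandwich $A_b^{-1}B_bA_b^{-1}$ becomes $B_b/A_b^2$ and likewise $D_y/C_y^2$. The limit points follow by minimizing the limit functions: differentiating $W_b=\eta^{-1}\|\theta_0\|_2^2+n\log\eta$ yields $\eta_b^*=\|\theta_0\|_2^2/n$, and differentiating $W_y=\sigma^4\eta^{-2}\theta_0^T\Sigma^{-1}\theta_0-2\sigma^4\eta^{-1}\Tr(\Sigma^{-1})$ yields $\eta_y^*=\theta_0^T\Sigma^{-1}\theta_0/\Tr(\Sigma^{-1})$.

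The key simplification is to feed the stationarity identities $\|\theta_0\|_2^2=n\eta_b^*$ and $\theta_0^T\Sigma^{-1}\theta_0=\eta_y^*\Tr(\Sigma^{-1})$ back into the variance formulas. For EB, \eqref{eq:element of A eta_b_star} gives $A_b=2\eta^{-3}\|\theta_0\|_2^2-n\eta^{-2}$, which collapses to $A_b=n\eta_b^{*-2}$, while \eqref{eq:element of B eta_b_star} gives $B_b=4\sigma^2\eta_b^{*-4}\theta_0^T\Sigma^{-1}\theta_0$; hence $B_b/A_b^2=(4\sigma^2/n^2)\theta_0^T\Sigma^{-1}\theta_0$, which is \eqref{eq:asymptotic normality of eta_eb for the ridge regression case}. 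For $\SURE_y$, the three terms of \eqref{eq:element of C eta_y_star} combine to $2\sigma^4(3\eta^{-4}\theta_0^T\Sigma^{-1}\theta_0-2\eta^{-3}\Tr(\Sigma^{-1}))$, which collapses via the stationarity identity to $C_y=2\sigma^4\eta_y^{*-3}\Tr(\Sigma^{-1})$; and since each of the two summands inside the brackets of \eqref{eq:element of D eta_y_star} equals $-\eta_y^{*-3}\Sigma^{-1}$, one obtains $D_y=16\sigma^{10}\eta_y^{*-6}\theta_0^T\Sigma^{-3}\theta_0$, so that the $\eta_y^*$ and $\sigma$ powers cancel in $D_y/C_y^2$ to produce \eqref{eq:asymptotic normality of eta_sy for the ridge regression case}.

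Finally, for the limiting ratio I would pass to the spectral decomposition $\theta_0=\sum_{i=1}^n c_iu_i$ in the eigenbasis of $\Sigma$, so that $\theta_0^T\Sigma^{-1}\theta_0=\sum_i c_i^2/\lambda_i(\Sigma)$, $\theta_0^T\Sigma^{-3}\theta_0=\sum_i c_i^2/\lambda_i^3(\Sigma)$ and $\Tr(\Sigma^{-1})=\sum_i 1/\lambda_i(\Sigma)$. As $\lambda_n(\Sigma)\to0$ with the remaining eigenvalues fixed, each sum is dominated by its $i=n$ term, so the factor $\Tr^2(\Sigma^{-1})\,\theta_0^T\Sigma^{-1}\theta_0/\theta_0^T\Sigma^{-3}\theta_0$ tends to $(\lambda_n^{-2})(c_n^2\lambda_n^{-1})/(c_n^2\lambda_n^{-3})=1$, leaving the ratio at $1/n^2$. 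I expect the main difficulty to be the bookkeeping in collapsing $C_y$ and $D_y$ — verifying that the two bracketed summands in $D_y$ genuinely coincide and that the three terms of $C_y$ reduce to a single term via the $\eta_y^*$ stationarity relation — rather than anything conceptually deep; the closing dominant-balance argument is then short, needing only $c_n\neq0$.
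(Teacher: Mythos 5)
Your proposal is correct and follows essentially the same route as the paper's proof: derive $\eta_b^{*}=\theta_0^{T}\theta_0/n$ and $\eta_y^{*}=\theta_0^{T}\Sigma^{-1}\theta_0/\Tr(\Sigma^{-1})$ from the first-order optimality conditions of $W_b$ and $W_y$, substitute $P=\eta I_n$ into $A_b,B_b,C_y,D_y$ to obtain the two sandwich variances $B_b/A_b^2$ and $D_y/C_y^2$ (your collapsed values agree with the paper's $A_b=n^3/(\theta_0^{T}\theta_0)^2$, $B_b=4\sigma^2 n^4(\theta_0^{T}\theta_0)^{-4}\theta_0^{T}\Sigma^{-1}\theta_0$, $C_y=2\sigma^4\Tr^4(\Sigma^{-1})/(\theta_0^{T}\Sigma^{-1}\theta_0)^3$, $D_y=16\sigma^{10}\Tr^6(\Sigma^{-1})(\theta_0^{T}\Sigma^{-1}\theta_0)^{-6}\theta_0^{T}\Sigma^{-3}\theta_0$), and finish with the same dominant-balance argument in the eigenbasis of $\Sigma$ as $\lambda_n(\Sigma)\to 0$. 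The one point where you are slightly more careful than the paper is in flagging the genericity condition $c_n\neq 0$ (nonzero component of $\theta_0$ along the eigenvector of $\lambda_n(\Sigma)$), which the paper's computation with $\tilde g_n$ also tacitly requires, since with $\tilde g_n=0$ the ratio would diverge rather than tend to $1/n^2$.
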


It implies that even for the ridge regression case, when $\cond(\Sigma)\to\infty$ and $n\geq 2$, the asymptotic variance of $\hat{\eta}_{\Sy}-\eta_{y}^{*}$ still tends to be $n^2$ times larger than that of $\hat{\eta}_{\EB}-\eta_{b}^{*}$. Then in this case, the convergence rate of $\hat{\eta}_{\Sy}$ to $\eta_{y}^{*}$ is slower than that of $\hat{\eta}_{\EB}$ to $\eta_{b}^{*}$.

\section{Numerical Simulation}

To generate data sets, we construct $\{\phi(t)\}_{t=1}^{N}$ as independent and Gaussian distributed vectors with zero mean and fixed covariance $\Sigma$. Then it satisfies $(\Phi^{T}\Phi)/N \overset{a.s.}\to \Sigma$ as $N\to\infty$, which can be proved by Corollary \ref{corollary:convergence of sample covariance matrix}. It is worth to note that under our simulation settings, $\delta_{N}=1/\sqrt{N}$, which is worse than the assumption in Theorem \ref{thm:asymptotic normality of eta_sy difference}.

In our simulation experiments, we consider the ridge regression case and set $n=50$, $\cond(\Sigma)=1\times10^{5}$ and $snr=5$. The number of Monte Carlo simulations is selected as $1\times10^{3}$. Define that $\theta_{0}\triangleq\left[\begin{array}{ccc}g_{1}&\cdots&g_{n}\end{array}\right]^{T}$ and $V^{*}\triangleq\left[\begin{array}{ccc}v(1)^{*}&\cdots&v(N)^{*}\end{array}\right]^{T}$. The performance of $\hat{\theta}^{\TR}$ in \eqref{eq:RLS estimator} can be evaluated by relative criteria \cite{Ljung1995} as follows,
\begin{align}
\text{Fit}_{g}(\hat{\theta}^{\TR},\theta_{0})=&100\times\left(1-\frac{\|\hat{\theta}^{\TR}-\theta_{0}\|_{2}}{\|\theta_{0}-\overline{\theta_{0}}\|_{2}} \right)\\
\text{Fit}_{y}(\hat{\theta}^{\TR},\theta_{0})=&100\times\left(1-\frac{\|\Phi\hat{\theta}^{\TR}-\Phi\theta_{0}-V^{*}\|_{2}}{\|\Phi\theta_{0}+V^{*}-\overline{Y^{*}}\|_{2}} \right),
\end{align}
where
\begin{align}
\overline{\theta_{0}}=\frac{1}{n}\sum_{i=1}^{n}g_{i},\ \overline{Y^{*}}=\frac{1}{N}\sum_{i=1}^{N}[\phi(i)^{T}\theta_{0}+v(i)^{*}].
\end{align}
In fact, $\text{Fit}_{g}$ evaluates the performance of $\hat{\theta}^{\TR}$ in the sense of $\MSE_{g}$ and $\text{Fit}_{y}$ measures in the sense of $\MSE_{y}$. The convergences of $\Phi^{T}\Phi/N$ to $\Sigma$, $\hat{\eta}_{\EB}$ to $\eta_{b}^{*}$, $\hat{\eta}_{\Sy}$ to $\eta^{*}_{y}$, $\overline{\mathcal{F}_{\EB}}$ to $W_{b}$ and $\overline{\mathcal{F}_{\Sy}}$ to $W_{y}$ are also evaluated by the measure of fit similarly.

\begin{figure}[thpb]
\centering
\includegraphics[width=0.5\textwidth]{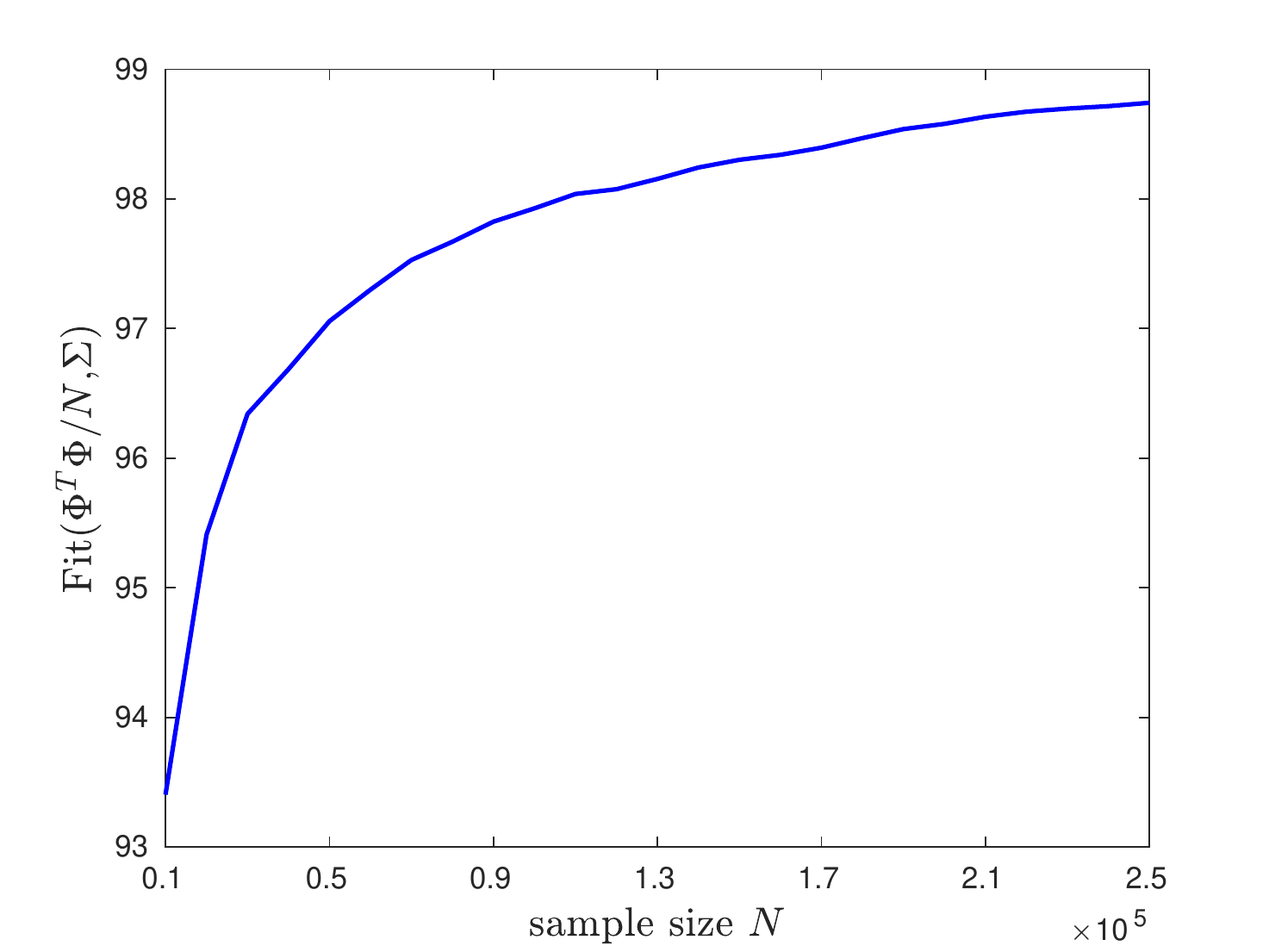}
\caption{Convergence of $(\Phi^{T}\Phi)/N$ to $\Sigma$}
\label{fig:convergence of pp}
\end{figure}

\begin{figure}[thpb]
\centering
\includegraphics[width=0.5\textwidth]{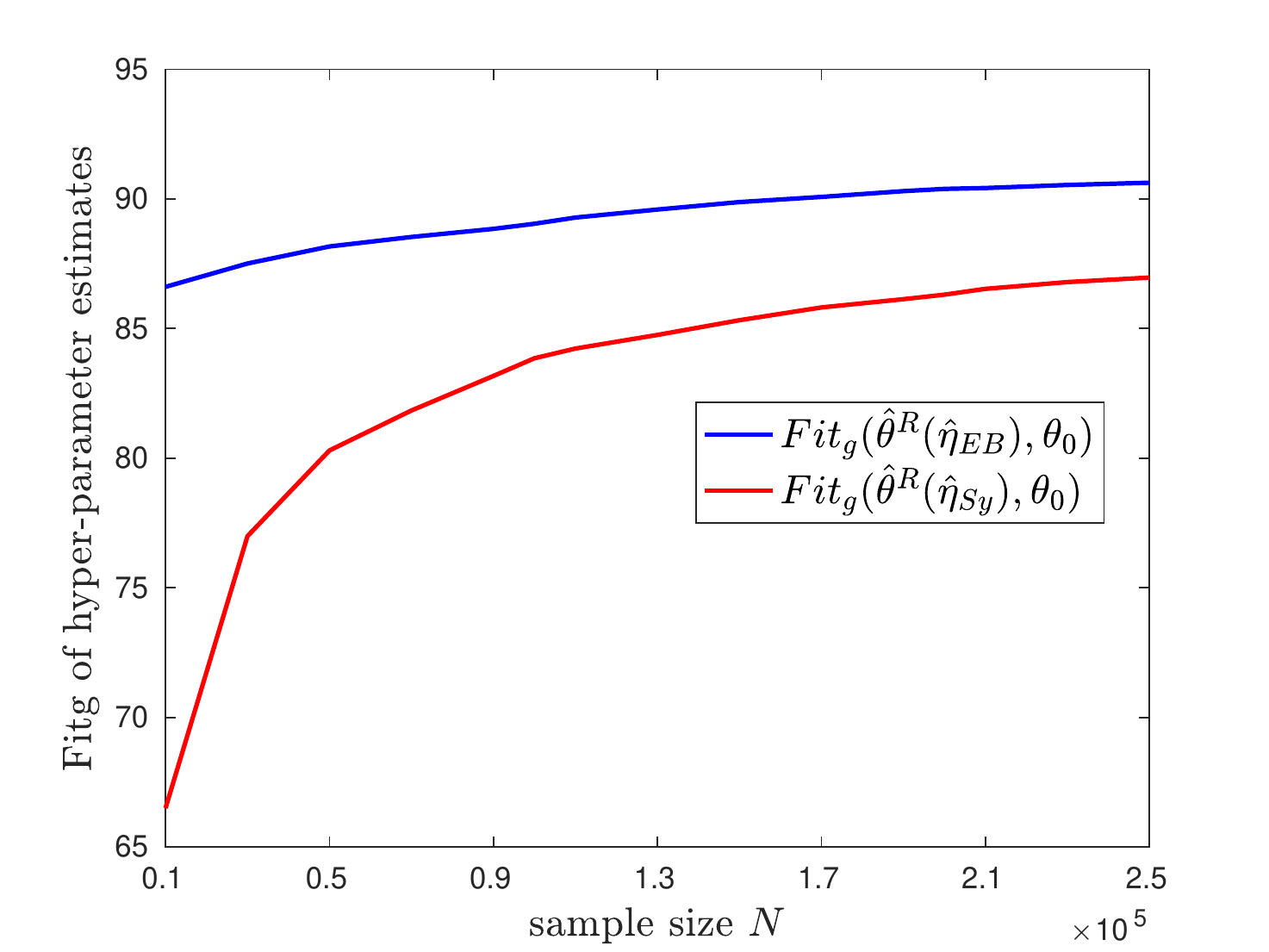}
\caption{Average $\text{Fit}_{g}$ of $\hat{\theta}^{\TR}(\hat{\eta}_{\EB})$ and $\hat{\theta}^{\TR}(\hat{\eta}_{\Sy})$}
\label{fig:fits of theta}
\end{figure}

\begin{figure}[thpb]
\centering
\includegraphics[width=0.5\textwidth]{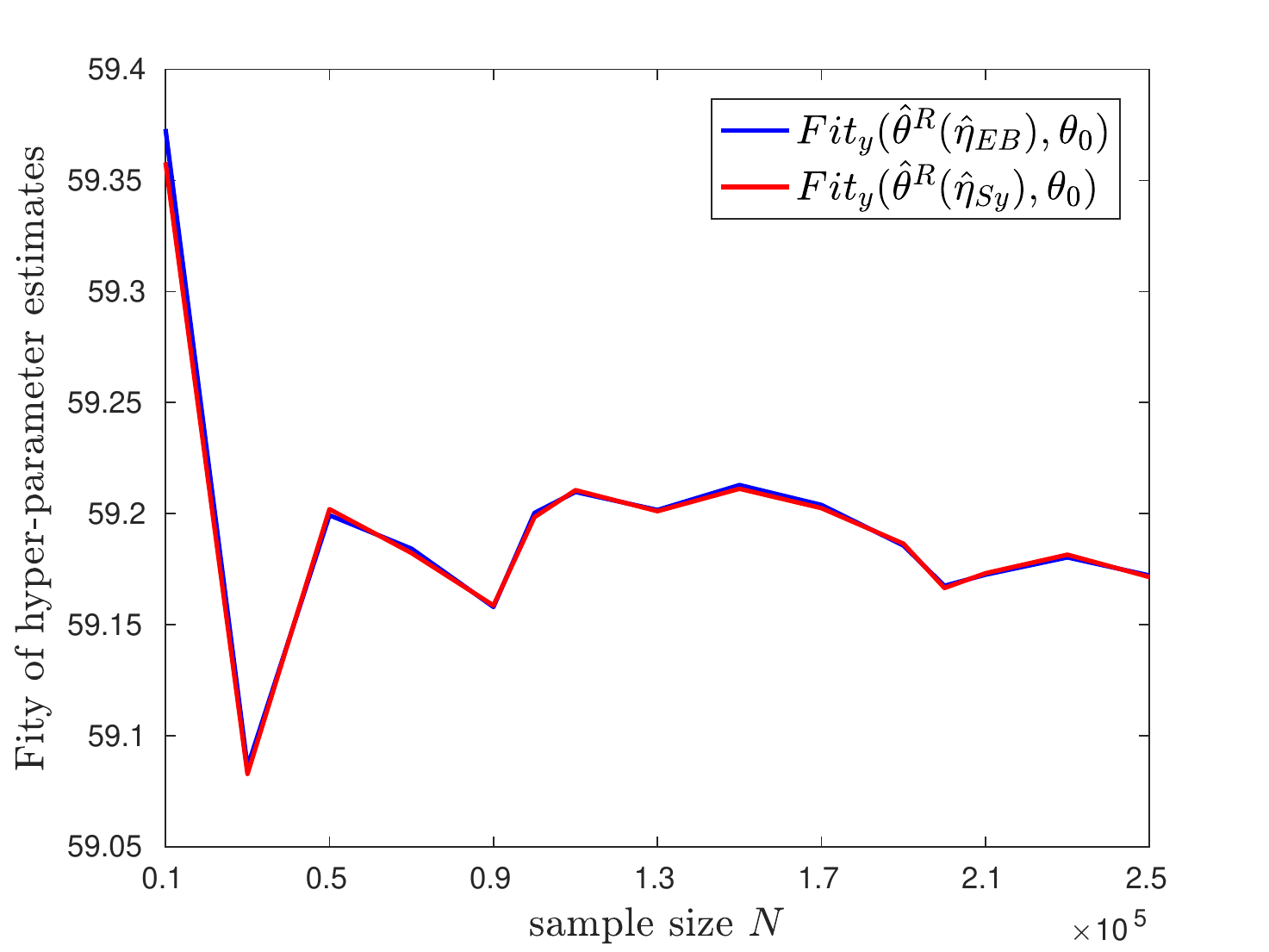}
\caption{Average $\text{Fit}_{y}$ of $\hat{\theta}^{\TR}(\hat{\eta}_{\EB})$ and $\hat{\theta}^{\TR}(\hat{\eta}_{\Sy})$}
\label{fig:fits of Y}
\end{figure}

\begin{figure}[thpb]
\centering
\includegraphics[width=0.5\textwidth]{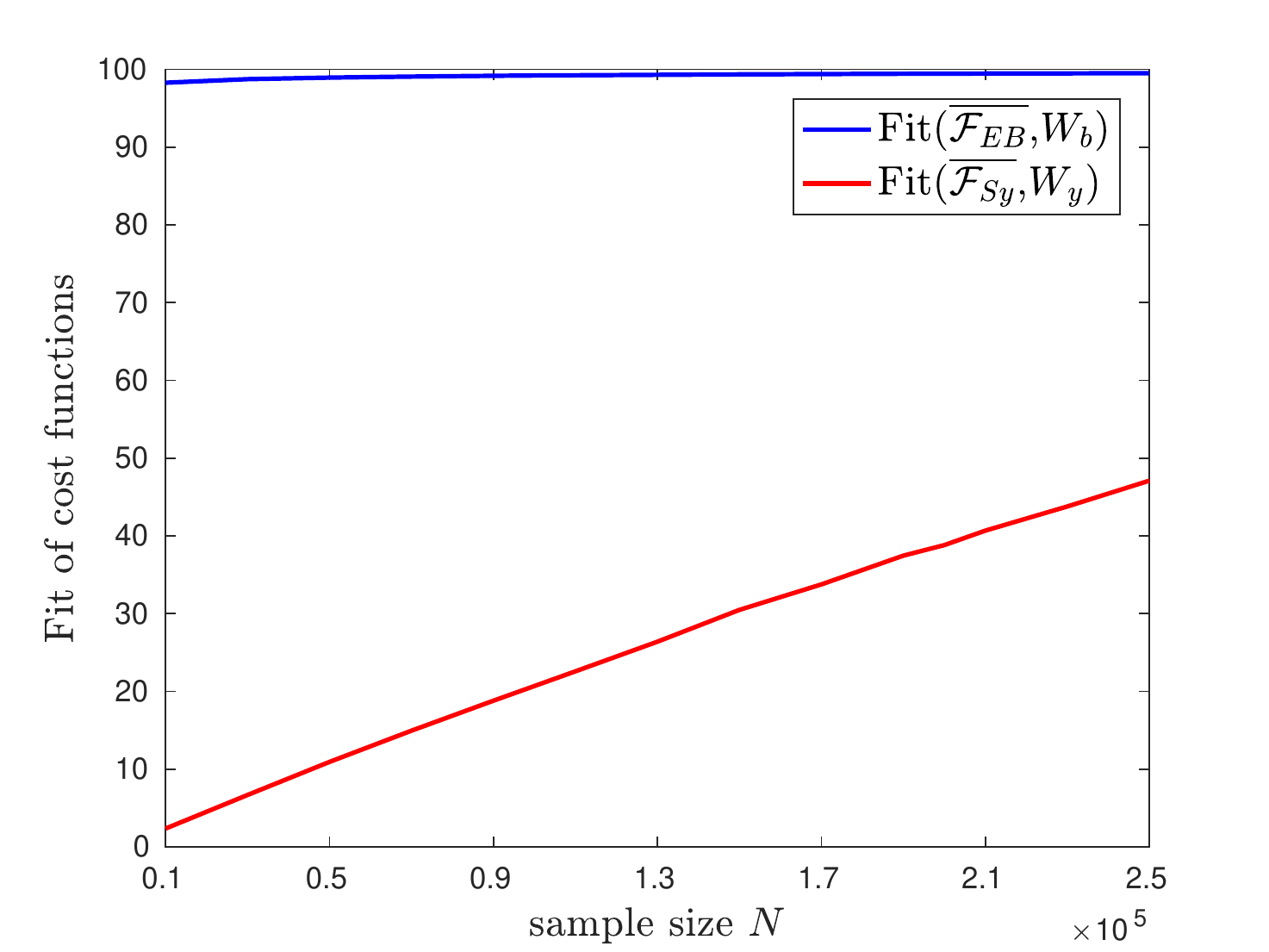}
\caption{Average fits of cost functions}
\label{fig:fits of F}
\end{figure}

\begin{figure}[thpb]
\centering
\includegraphics[width=0.5\textwidth]{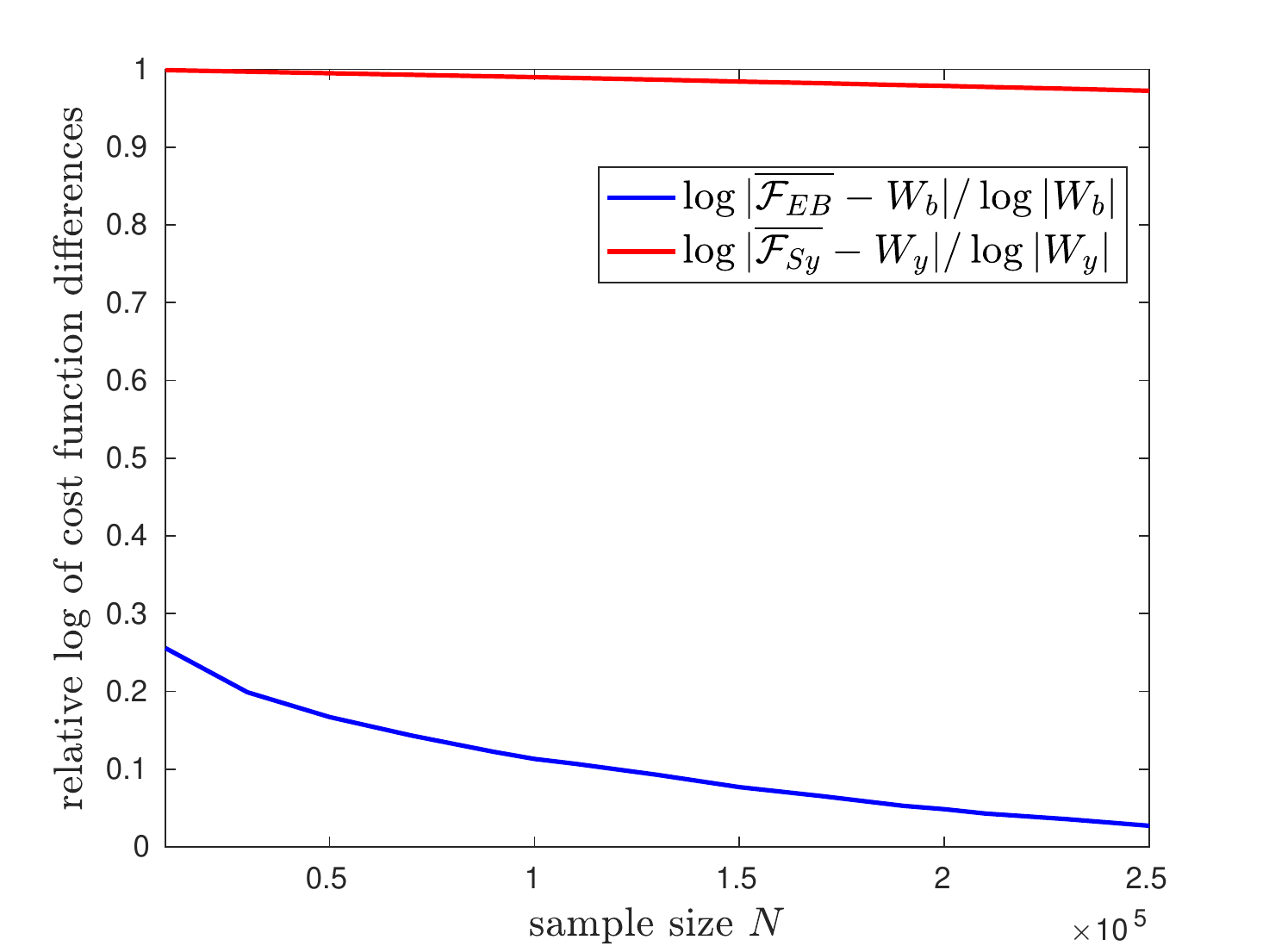}
\caption{Logarithm of absolute cost function differences}
\label{fig:logs of F}
\end{figure}

\begin{figure}[thpb]
\centering
\includegraphics[width=0.5\textwidth]{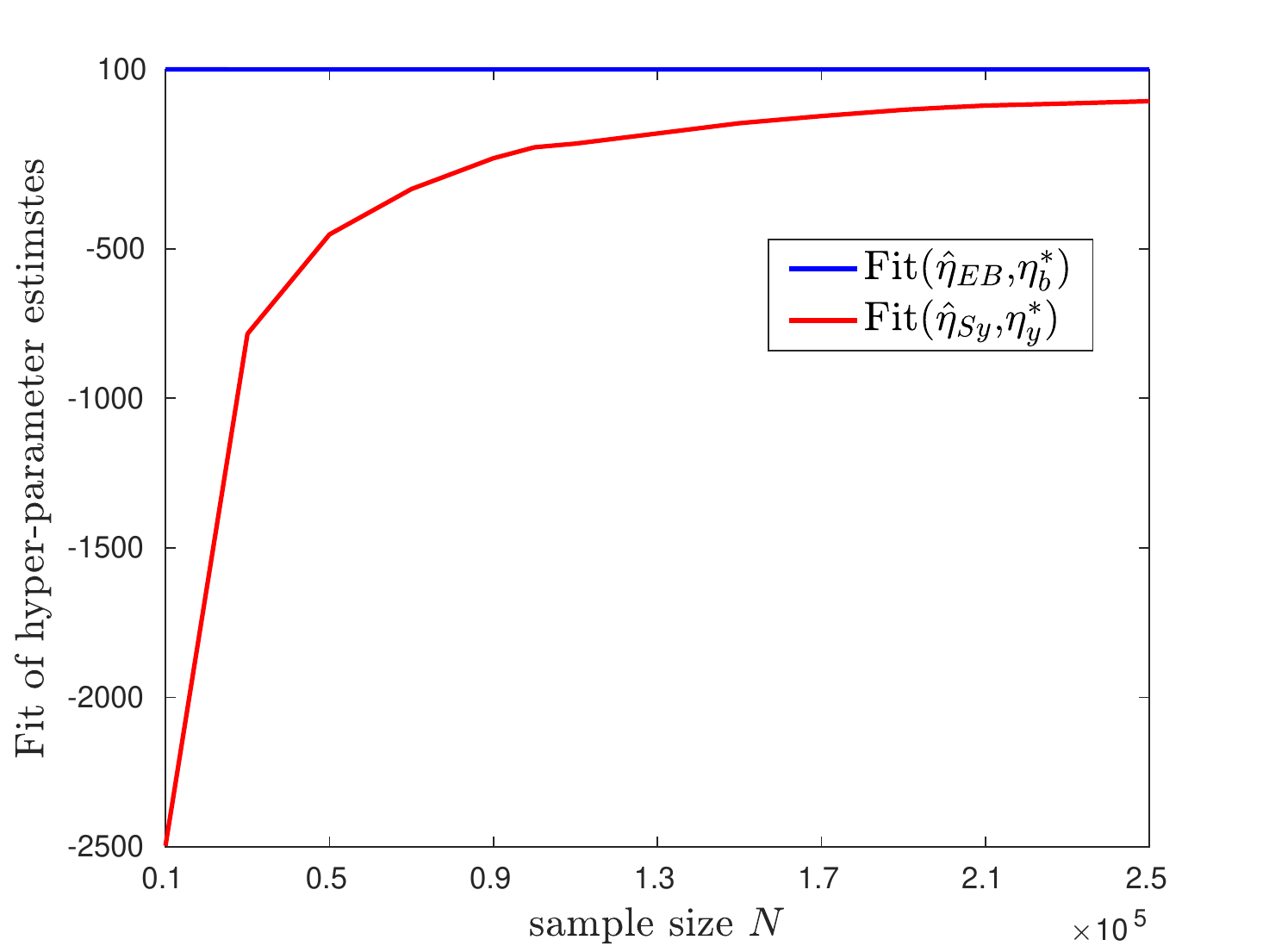}
\caption{Average fits of hyper-parameter estimates}
\label{fig:fits of eta}
\end{figure}

\begin{figure}[thpb]
\centering
\includegraphics[width=0.5\textwidth]{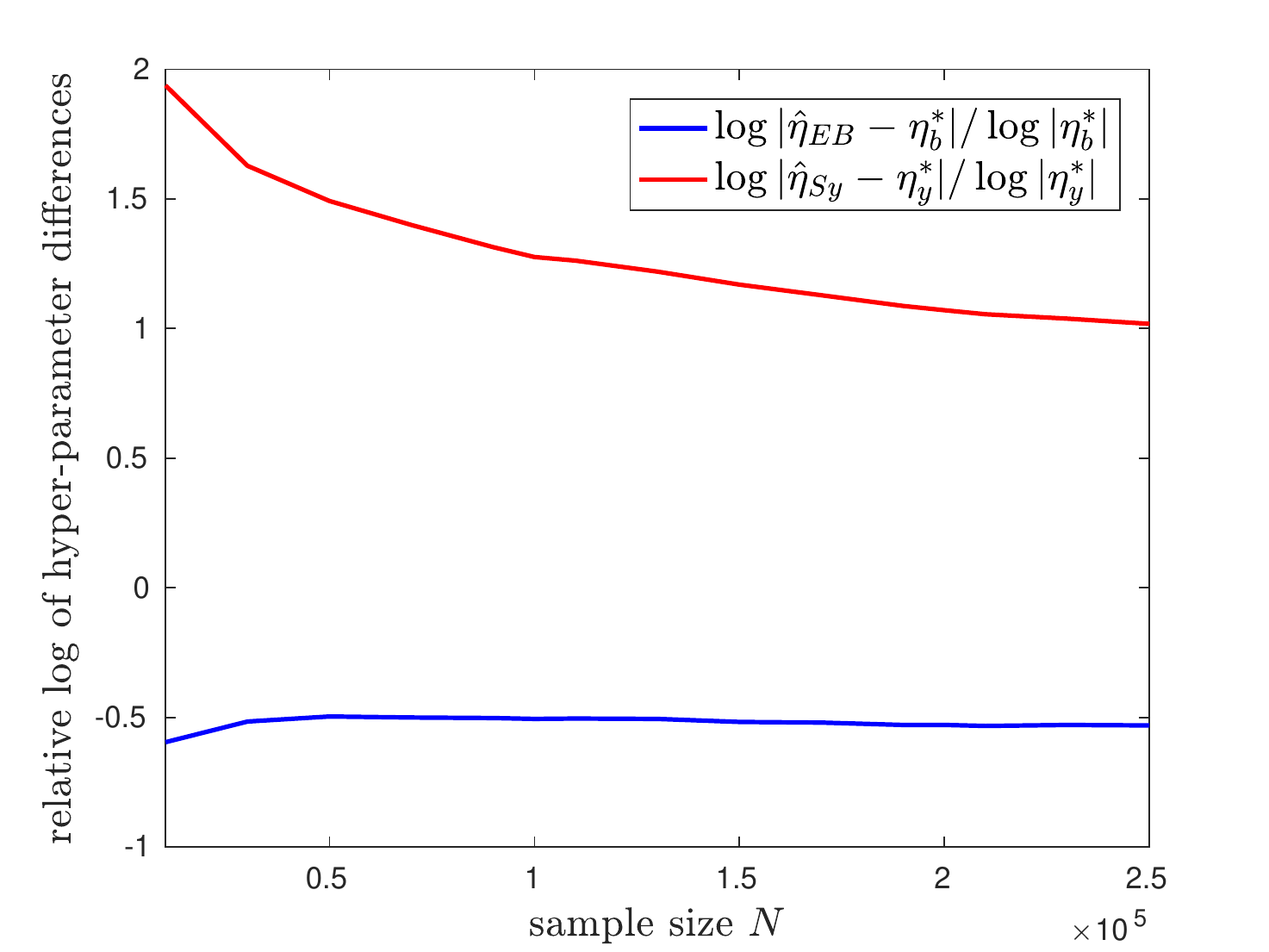}
\caption{Logarithm of absolute hyper-parameter estimate differences}
\label{fig:logs of eta}
\end{figure}

Firstly, from Fig \ref{fig:convergence of pp}, we can see that as the sample size $N$ becomes larger, $\text{Fit}((\Phi^{T}\Phi)/N,\Sigma)$ increases gradually up to $99$, which means that $(\Phi^{T}\Phi)/N$ tends to converge to $\Sigma$. It verifies the consistency of our simulation settings and Assumption \ref{asp:almost sure convergence of PP to Sigma}.

Secondly, Fig \ref{fig:fits of theta} shows that the performance of $\hat{\theta}^{\TR}(\hat{\eta}_{\EB})$ is better than that of $\hat{\theta}^{\TR}(\hat{\eta}_{\Sy})$ in the sense of $\MSE_{g}$. But in Fig \ref{fig:fits of Y}, the overall $\text{Fit}_{y}$s of $\hat{\theta}^{\TR}(\hat{\eta}_{\EB})$ and $\hat{\theta}^{\TR}(\hat{\eta}_{\Sy})$ are almost identical for all sample size, indicating that $\cond(\Phi^{T}\Phi)$ may exert few influence on $\MSE_{y}$ of the RLS estimator.

Thirdly, according to Fig \ref{fig:fits of F}, it can be observed that when $\cond(\Phi^{T}\Phi)$ is very close to $10^5$, $\overline{\mathcal{F}_{\EB}}$ converges to $W_{b}$ much faster than $\overline{\mathcal{F}_{\Sy}}$ to $W_{y}$. At the same time, the intercept of the vertical axis in Fig \ref{fig:logs of F} is around $0.75$, which indicates that $|\overline{\mathcal{F}_{\EB}}-W_{b}|$ converges to zero faster than that of $|\overline{\mathcal{F}_{\Sy}}-W_{y}|$.

Lastly, according to Fig \ref{fig:fits of eta}, it can be observed that when $\cond(\Phi^{T}\Phi)$ is very close to $10^5$, the convergence rate of $\hat{\eta}_{\EB}$ to $\eta_{b}^{*}$ is much faster than that of $\hat{\eta}_{\Sy}$ to $\eta_{y}^{*}$. The vertical intercept of Fig \ref{fig:logs of eta} is about $2.5$, which also reflects faster convergence rate of $\|\hat{\eta}_{\EB}-\eta_{b}^{*}\|_{2}$ than $\|\hat{\eta}_{\Sy}-\eta_{y}^{*}\|_{2}$ to zero.

\section{CONCLUSIONS}

In this paper, we focus on the comparison between two hyper-parameter estimation methods: $\EB$ and $\SURE_{\text{y}}$ with an emphasis on the influence of $\cond(\Phi^{T}\Phi)$, where $\cond(\cdot)$ denotes the condition number and $\Phi$ is the regression matrix. Our major results are about the contrast between convergence rates of two pairs, $\overline{\mathcal{F}_{\EB}}$ to $W_{b}$ and $\overline{\mathcal{F}_{\Sy}}$ to $W_{y}$, and $\hat{\eta}_{\EB}$ to $\eta_{b}^{*}$ and $\hat{\eta}_{\Sy}$ to $\eta_{y}^{*}$, respectively.

\begin{enumerate}
\item Comparing terms with the same boundedness in probability, the greatest power of $\cond(\Phi^{T}\Phi)$ of the upper bound of $|\overline{\mathcal{F}_{\Sy}}-W_{y}|$ is always one larger than that of the upper bound of $|\overline{\mathcal{F}_{\EB}}-W_{b}|$. It indicates that the ill-conditioned $\Phi^{T}\Phi$ may result in far slower convergence rate of $\overline{\mathcal{F}_{\Sy}}$ to $W_{y}$ than that of $\overline{\mathcal{F}_{\EB}}$ to $W_{b}$.

\item As the sample size $N\to\infty$, under the assumption of $\delta_{N}=o(1/\sqrt{N})$ and Gaussian distributed noise, we prove the asymptotic normality of $\hat{\eta}_{\EB}-\eta_{b}^{*}$ and $\hat{\eta}_{\Sy}-\eta_{y}^{*}$. In this case, $\|\hat{\eta}_{\EB}-\eta_{b}^{*}\|_{2}$ and $\|\hat{\eta}_{\Sy}-\eta_{y}^{*}\|_{2}$ are both bounded in probability with $1/\sqrt{N}$ but may have different scaling coefficients, which are connected with their asymptotic covariance matrices. For the ridge regression case, we derive that, as $\cond(\Phi^{T}\Phi)$ tends to infinity, the asymptotic variance of $\hat{\eta}_{\Sy}-\eta_{y}^{*}$ tends to be $n^2$ times larger than that of $\hat{\eta}_{\EB}-\eta_{b}^{*}$, where $n$ is the number of parameters to be estimated.
\end{enumerate}

These findings provide more insights into the influences of $\cond(\Phi^{T}\Phi)$ upon different performances of $\EB$ and $\SURE_{y}$. Our next step is to relax the restriction $\delta_{N}=o(1/\sqrt{N})$ as $N\to\infty$ to further explore the convergence rates of $\EB$ and $\SURE_{y}$ estimators with different boundedness in probability.

%%%%%%%%%%%%%%%%%%%%%%%%%%%%%%%%%%%%%%%%%%%%%%%%%%%%%%%%%%%%%%%%%%%%%%%%%%%%%%%%

\section*{APPENDIX A}

Proofs of Theorem \ref{thm:convergence rate of Feb and Wb}, \ref{thm:convergence rate of Fsy and Wy}, \ref{thm:asymptotic normality of eta_eb difference} and \ref{thm:asymptotic normality of eta_sy difference}, and Corollary \ref{corollary:ridge regression case for asymptotic normality} are shown in Appendix A.

\subsection{Proof of Theorem \ref{thm:convergence rate of Feb and Wb}}

The difference of $\overline{\mathcal{F}_{\EB}}$ and $W_{b}$ can be represented as
\begin{align}\label{eq:difference of Feb and Wb}
\overline{\mathcal{F}_{\EB}}-W_{b}=D_{1,b}+D_{2,b},
\end{align}
where
\begin{align}\label{eq:term1 difference of Feb and Wb}	D_{1,b}=&(\hat{\theta}^{\LS})^{T}S^{-1}\hat{\theta}^{\LS}-\theta_{0}^{T}P^{-1}\theta_{0}\nonumber\\	=&(\hat{\theta}^{\LS}-\theta_{0})^{T}S^{-1}\hat{\theta}^{\LS}+\theta_{0}^{T}(S^{-1}-P^{-1})\hat{\theta}^{\LS}\nonumber\\	&+\theta_{0}^{T}P^{-1}(\hat{\theta}^{\LS}-\theta_{0})\\
\label{eq:term2 difference of Feb and Wb}
D_{2,b}=&\log\det(S)-\log\det(P)\nonumber\\
=&\log\det(SP^{-1})\nonumber\\
=&\log\det(P^{-1/2}SP^{-1/2}).
\end{align}
	
\begin{itemize}
\item \underline{Computation of $\|\hat{\theta}^{\LS}-\theta_{0}\|_{2}$}
		
Using Lemma \ref{lemma:matrix norm inequality}, it can be known that
\begin{align} \|\hat{\theta}^{\LS}-\theta_{0}\|_{2}=&\|(\Phi^{T}\Phi)^{-1}\Phi^{T}V\|_{2}\nonumber\\
\leq&\|(\Phi^{T}\Phi)^{-1}\|_{F}\|\Phi^{T}V\|_{2}.
\end{align}
The upper bound of $\|(\Phi^{T}\Phi)^{-1}\|_{F}=O_{p}(1/N)$ can be derived by applying Corollary \ref{corollary:upper bounds of building blocks}. We can also derive that $\|\Phi^{T}V\|_{2}=O_{p}(\sqrt{N})$ using CLT. It can be seen that
\begin{align}		
\|\hat{\theta}^{\LS}-\theta_{0}\|_{2}
\leq&\frac{1}{\sqrt{N}}\frac{\sqrt{n}N}{\lambda_{1}(\Phi^{T}\Phi)}\cond(\Phi^{T}\Phi)\frac{\|\Phi^{T}V\|_{2}}{\sqrt{N}}.
\end{align}
		
\item \underline{Computation of $\|\hat{\theta}^{\LS}\|_{2}$}
		
Since
\begin{align}		\|\hat{\theta}^{\LS}\|_{2}=&\|\hat{\theta}^{\LS}-\theta_{0}+\theta_{0}\|_{2}\nonumber\\
\leq&\|\hat{\theta}^{\LS}-\theta_{0}\|_{2}+\|\theta_{0}\|_{2},
\end{align}
it leads to that
\begin{align}	\|\hat{\theta}^{\LS}\|_{2}\leq\|\theta_{0}\|_{2}+\frac{1}{\sqrt{N}}\frac{N}{\lambda_{1}(\Phi^{T}\Phi)}\cond(\Phi^{T}\Phi)\frac{\|\Phi^{T}V\|_{2}}{\sqrt{N}}.
\end{align}
		
\item \underline{Computation of $\|S^{-1}-P^{-1}\|_{F}$}
		
It can be derived that
\begin{align}
\|S^{-1}-P^{-1}\|_{F}
=&\|S^{-1}(P-S)P^{-1}\|_{F}\nonumber\\
=&\|\sigma^2S^{-1}(\Phi^{T}\Phi)^{-1}P^{-1}\|_{F}\nonumber\\
=&\|\sigma^2S^{-1}(\Phi^{T}\Phi)^{-1}P^{-1}\|_{F}\nonumber\\		\leq&\sigma^2\|S^{-1}\|_{F}\|(\Phi^{T}\Phi)^{-1}\|_{F}\|P^{-1}\|_{F}\nonumber\\		\leq&\frac{1}{N}\frac{\sigma^2(\sqrt{n})^3N}{\lambda_{1}(\Phi^{T}\Phi)}\frac{1}{\lambda_{1}(S)}\frac{1}{\lambda_{1}(P)}\nonumber\\
&\cond(\Phi^{T}\Phi)\cond(S)\cond(P).
\end{align}
\end{itemize}
	
For $D_{1,b}$ in \eqref{eq:term1 difference of Feb and Wb}, we can apply the inequalities above directly
\begin{align}
|D_{1,b}|
\leq&\|\theta_{0}\|_{2}\|\Phi^{T}V\|_{2}\|(\Phi^{T}\Phi)^{-1}\|_{F}(\|S^{-1}\|_{F}+\|P^{-1}\|_{F})\nonumber\\
&+\|(\Phi^{T}\Phi)^{-1}\|_{F}\|S^{-1}\|_{F}(\|\Phi^{T}V\|_{2}^2\|(\Phi^{T}\Phi)^{-1}\|_{F}\nonumber\\
&+\sigma^2\|\theta_{0}\|_{2}^2\|P^{-1}\|_{F})\nonumber\\
\label{eq:lb of D1b}
&+\sigma^2\|\theta_{0}\|_{2}\|\Phi^{T}V\|_{2}\|(\Phi^{T}\Phi)^{-1}\|_{F}^2\|S^{-1}\|_{F}\|P^{-1}\|_{F}.
\end{align}

Since $P^{-1/2}SP^{-1/2}$ is positive definite, we can see that
\begin{align}
&\Tr(I_{n}-P^{1/2}S^{-1}P^{1/2})\nonumber\\
\leq&\log\det(P^{-1/2}SP^{-1/2})\leq\Tr(P^{-1/2}SP^{-1/2}-I_{n}),
\end{align}
which yields
\begin{align}
|D_{2,b}|=&|\log\det(P^{-1/2}SP^{-1/2})|\nonumber\\
\leq&\max\{|\Tr(I_{n}-P^{1/2}S^{-1}P^{1/2})|,|\Tr(P^{-1/2}SP^{-1/2}-I_{n})|\}.
\end{align}
Define
\begin{align}
\rank(I_{n}-P^{1/2}S^{-1}P^{1/2})=r_{1}.
\end{align}
It follows that
\begin{align}
&|D_{2,b}|\nonumber\\
\leq&\max\{\sqrt{r_{1}}\|I_{n}-P^{1/2}S^{-1}P^{1/2}\|_{F},\sqrt{r_{1}}\|P^{-1/2}SP^{-1/2}-I_{n}\|_{F}\}\nonumber\\
=&\max\{\sqrt{r_{1}}\|P^{1/2}(P^{-1}-S^{-1})P^{1/2}\|_{F},\sqrt{r_{1}}\|P^{-1/2}(S-P)P^{-1/2}\|_{F}\}\nonumber\\
\label{eq:lb of D2b}
\leq&\max\{\sqrt{r_{1}}\sigma^2\|(\Phi^{T}\Phi)^{-1}\|_{F}\|S^{-1}\|_{F}\|P^{-1}\|_{F}\|P^{1/2}\|_{F}^{2},\nonumber\\
&\sqrt{r_{1}}\sigma^2\|(\Phi^{T}\Phi)^{-1}\|_{F}\|P^{-1/2}\|_{F}^2\}.
\end{align}
	
Combining \eqref{eq:lb of D1b} with \eqref{eq:lb of D2b}, it can be known that
\begin{align}
|\overline{\mathcal{F}_{\EB}}-W_{b}|
\leq&|D_{1,b}|+|D_{2,b}|\nonumber\\
\leq&E_{1,b}+E_{2,b}+E_{3,b},
\end{align}
where
\begin{align}
E_{1,b}=&\|\theta_{0}\|_{2}\|\Phi^{T}V\|_{2}\|(\Phi^{T}\Phi)^{-1}\|_{F}(\|S^{-1}\|_{F}+\|P^{-1}\|_{F})\\
E_{2,b}=&\|(\Phi^{T}\Phi)^{-1}\|_{F}\nonumber\\
&\left[\|S^{-1}\|_{F}(\|\Phi^{T}V\|_{2}^2\|(\Phi^{T}\Phi)^{-1}\|_{F}+\sigma^2\|\theta_{0}\|_{2}^2\|P^{-1}\|_{F})\right.\nonumber\\
&+\left.\sqrt{r_{1}}\sigma^2\max(\|S^{-1}\|_{F}\|P^{-1}\|_{F}\|P^{1/2}\|_{F}^2,\|P^{-1/2}\|_{F}^2)\right]\\
E_{3,b}=&\sigma^2\|\theta_{0}\|_{2}\|\Phi^{T}V\|_{2}\|(\Phi^{T}\Phi)^{-1}\|_{F}^2\|S^{-1}\|_{F}\|P^{-1}\|_{F}.
\end{align}
The upper bounds of $E_{1,b}$, $E_{2,b}$ and $E_{3,b}$ can be derived using Corollary \ref{corollary:upper bounds of building blocks}.

\subsection{Proof of Theorem \ref{thm:convergence rate of Fsy and Wy}}

The difference of $\overline{\mathcal{F}_{\Sy}}$ and $W_{y}$ can be represented as
\begin{align}\label{eq:difference of Fsy and Wy}
\overline{\mathcal{F}_{\Sy}}-W_{y}=D_{1,y}+\Tr(D_{2,y}),
\end{align}
where
\begin{align}\label{eq:term1 difference of Fsy and Wy}
D_{1,y}	=&\sigma^4(\hat{\theta}^{\LS})^{T}S^{-T}N(\Phi^{T}\Phi)^{-1}S^{-1}\hat{\theta}^{\LS}\nonumber\\
&-\sigma^4\theta_{0}^{T}P^{-T}\Sigma^{-1}P^{-1}\theta_{0}\nonumber\\	=&\sigma^4(\hat{\theta}^{\LS}-\theta_{0})^{T}S^{-T}N(\Phi^{T}\Phi)^{-1}S^{-1}\hat{\theta}^{\LS}\nonumber\\	&+\sigma^4\theta_{0}^{T}(S^{-1}-P^{-1})^{T}N(\Phi^{T}\Phi)^{-1}S^{-1}\hat{\theta}^{\LS}\nonumber\\	&+\sigma^4\theta_{0}^{T}P^{-T}(N(\Phi^{T}\Phi)^{-1}-\Sigma^{-1})S^{-1}\hat{\theta}^{\LS}\nonumber\\
&+\sigma^4\theta_{0}^{T}P^{-T}\Sigma^{-1}(S^{-1}-P^{-1})\hat{\theta}^{\LS}\nonumber\\
&+\sigma^4\theta_{0}^{T}P^{-T}\Sigma^{-1}P^{-1}(\hat{\theta}^{\LS}-\theta_{0})\\
\label{eq:term2 difference of Fsy and Wy}
D_{2,y}
=&2\sigma^4(\Sigma^{-1}P^{-1}-N(\Phi^{T}\Phi)^{-1}S^{-1})\nonumber\\
=&2\sigma^4(\Sigma^{-1}-N(\Phi^{T}\Phi)^{-1})P^{-1}\nonumber\\
&+2\sigma^4N(\Phi^{T}\Phi)^{-1}(P^{-1}-S^{-1}).
\end{align}
	
\begin{itemize}
\item \underline{Computation of $\|\Sigma^{-1}\|_{F}$}
		
\begin{align}
\|\Sigma^{-1}\|_{F}\leq\frac{\sqrt{n}}{\lambda_{1}(\Sigma)}\cond(\Sigma).
\end{align}
		
\item \underline{Computation of $\|N(\Phi^{T}\Phi)^{-1}-\Sigma^{-1}\|_{F}$}
		
Since $\|(\Phi^{T}\Phi)/N-\Sigma\|_{F}=O_{p}(\delta_{N})$, we can know that
\begin{align}		\left\|\frac{\Phi^{T}\Phi}{N}-\Sigma\right\|_{F}\leq\delta_{N}\frac{\sqrt{n}\lambda_{n}({\Phi^{T}\Phi}/{N}-\Sigma)}{\delta_{N}}\cond(\frac{\Phi^{T}\Phi}{N}-\Sigma).
\end{align}
Furthermore,
\begin{align}
&\|N(\Phi^{T}\Phi)^{-1}-\Sigma^{-1}\|_{F}\nonumber\\		=&\left\|N(\Phi^{T}\Phi)^{-1}\left(\frac{\Phi^{T}\Phi}{N}-\Sigma\right)\Sigma^{-1}\right\|_{F}\nonumber\\		\leq&\|N(\Phi^{T}\Phi)^{-1}\|_{F}\left\|\frac{\Phi^{T}\Phi}{N}-\Sigma\right\|_{F}\|\Sigma^{-1}\|_{F}\nonumber\\		\leq&\delta_{N}\frac{n^{3/2}N}{\lambda_{1}(\Phi^{T}\Phi)}\frac{\lambda_{n}({\Phi^{T}\Phi}/{N}-\Sigma)}{\delta_{N}}\frac{1}{\lambda_{1}(\Sigma)}\nonumber\\
&\cond(\Phi^{T}\Phi)\cond(\frac{\Phi^{T}\Phi}{N}-\Sigma)\cond(\Sigma).
\end{align}
\end{itemize}

Suppose that
\begin{align}
r_{2}=\rank(D_{2,y})=\rank(\Sigma^{-1}P^{-1}-N(\Phi^{T}\Phi)^{-1}S^{-1}).
\end{align}

Thus, the absolute difference term can be rewritten as
\begin{align}
|\overline{\mathcal{F}_{\Sy}}-W_{y}|
=&|D_{1,y}+\Tr(D_{2,y})|\nonumber\\
\leq&|D_{1,y}|+\sqrt{r_{2}}\|D_{2,y}\|_{F}.
\end{align}
Its upper bound can similarly be obtained with the computation of building blocks above.

\subsection{Proof of Theorem \ref{thm:asymptotic normality of eta_eb difference}}

We firstly derive the asymptotic normality of $\hat{\eta}_{\EB}-\eta_{b}^{*}$ using Lemma \ref{lemma:asymptotic normality of a consistent root} with $M_{N}=N\overline{\mathcal{F}_{\EB}}$ and $\eta^{*}=\eta^{*}_{b}$.

\begin{itemize}
\item \underline{assumptions \ref{asp1 in lemma:Euclidean space}, \ref{asp2 in lemma:measurability of F} and \ref{asp4 in lemma:continuity of 2nd derivative} in Lemma \ref{lemma:asymptotic normality of a consistent root}}

The first task is to show that $N\overline{\mathcal{F}_{\EB}}(Y,\eta)$ is a measurable function of $Y$ for all $\eta\in\Omega$. Recall that
\begin{align}
\overline{\mathcal{F}_{\EB}}
=&(\hat{\theta}^{\LS})^{T}S^{-1}\hat{\theta}^{\LS}+\log\det(S)\nonumber\\
=&Y^{T}Q^{-1}\Phi(\Phi^{T}\Phi)^{-1}\Phi^{T}Y\nonumber\\
&+\log\det[P+\sigma^2(\Phi^{T}\Phi)^{-1}].
\end{align}
We can observe that $N\overline{\mathcal{F}_{\EB}}(Y,\eta)$ is a continuous function of $Y$, which leads to that $N\overline{\mathcal{F}_{\EB}}(Y,\eta)$ is also a measurable function of $Y$, $\forall \eta\in\Omega$.

Then we show that $\frac{\partial N\overline{\mathcal{F}_{\EB}}}{\partial\eta}$ exists and is continuous in an open neighbourhood of $\eta^{*}_{b}$, and $\frac{\partial^2 N\overline{\mathcal{F}_{\EB}}}{\partial\eta\partial\eta^{T}}$ exists and is continuous in an open and convex neighbourhood of $\eta_{b}^{*}$.

For common kernel structures, like SS \eqref{eq:SS kernel}, DC \eqref{eq:DC kernel} and TC \eqref{eq:TC kernel}, $P(\eta)$ is a continuous, differentiable and second-order differentiable function of $\eta$ in $\Omega$. Meanwhile, the first-order derivative and the second-order derivative of $P(\eta)$ with respect to $\eta$ are both continuous for all $\eta\in\Omega$. Then under Assumption \ref{asp:interior points eta_eb_star eta_sy_star}, it can be derived that there exists an open neighbourhood of $\eta^{*}_{b}$ such that $\frac{\partial N\overline{\mathcal{F}_{\EB}}}{\partial\eta}$ exists and is continuous. There also exists an open and convex neighbourhood of $\eta_{b}^{*}$, in which $\frac{\partial^2 N\overline{\mathcal{F}_{\EB}}}{\partial\eta\partial\eta^{T}}$ exists and is continuous.

\item \underline{assumption \ref{asp3 in lemma:convergence of F} in Lemma \ref{lemma:asymptotic normality of a consistent root}}

In this part, we prove that $\overline{\mathcal{F}_{\EB}}(\eta)$ converges to $W_{b}(\eta)$ in probability and uniformly in one neighbourhood of $\eta_{b}^{*}$.

As shown in \eqref{eq:difference of Feb and Wb}, \eqref{eq:term1 difference of Feb and Wb} and \eqref{eq:term2 difference of Feb and Wb}, $\overline{\mathcal{F}_{\EB}}-W_{b}$ can be divided into two parts: $D_{1,b}$ and $\Tr(D_{2,b})$. Recall that
\begin{align}
D_{1,b} =&(\hat{\theta}^{\LS}-\theta_{0})^{T}S^{-1}\hat{\theta}^{\LS}+\theta_{0}^{T}(S^{-1}-P^{-1})\hat{\theta}^{\LS}\nonumber\\	&+\theta_{0}^{T}P^{-1}(\hat{\theta}^{\LS}-\theta_{0})\\
D_{2,b}=&\log\det(S)-\log\det(P).
\end{align}
Under Assumption \ref{asp:interior points eta_eb_star eta_sy_star}, there exists $\overline{\Omega}_{1}\subset\Omega$ containing $\eta_{b}^{*}$ such that $0<d_{1}\leq\|P(\eta)\|_{F}\leq d_{2}<\infty$ and $\|S^{-1}\|_{F}\leq\|P^{-1}\|_{F}\leq 1/d_{1}$ for all $\eta\in\overline{\Omega}_{1}$. Noting that as $N\to\infty$, $(\Phi^{T}\Phi)/N \overset{a.s.}\to\Sigma$, it gives that $\hat{\theta}^{\LS} \overset{a.s.}\to \theta_{0}$, $S^{-1} \overset{a.s.}\to P^{-1}$ as $N\to\infty$, which can be derived as follows,
\begin{align}
S^{-1}-P^{-1}
=&-S^{-1}(S-P)P^{-1}\nonumber\\
=&-\sigma^2S^{-1}(\Phi^{T}\Phi)^{-1}P^{-1}\overset{a.s.}\to 0\\
\hat{\theta}^{\LS}-\theta_{0}
=&N(\Phi^{T}\Phi)^{-1}\Phi^{T}V/N \overset{a.s.}\to 0.
\end{align}
It also follows that $\|\hat{\theta}^{\LS}-\theta_{0}\|_{2}=O_{p}(1/\sqrt{N})$, $\|\hat{\theta}^{\LS}\|_{2}=O_{p}(1)$ and $\|S^{-1}-P^{-1}\|_{F}=O_{p}(1/N)$.
Then we can see that each term of $D_{1,b}$ and $D_{2,b}$ converge to zero almost surely and uniformly $\forall \eta\in\overline{\Omega}_{1}$. In addition, the almost sure convergence can lead to the convergence in probability. Thus, $\overline{\mathcal{F}_{\EB}}(\eta)$ converges to $W_{b}(\eta)$ in probability and uniformly for all $\eta$ in $\overline{\Omega}_{1}$.

We can also show that $W_{b}(\eta)$ attains a strict local minimum at $\eta_{b}^{*}$. If $\eta_{b}^{*}$ in \eqref{eq:opt hyperparameter of Wb} is an interior point in $\Omega$, it should satisfy the first order optimality condition of $W_{b}(\eta)$, i.e.
\begin{align}
\left.\frac{\partial W_{b}(\eta)}{\partial \eta}\right|_{\eta_{b}^{*}}=0.
\end{align}
Combining with Assumption \ref{asp:2}, $\eta_{b}^{*}$ can strictly and locally minimize $W_{b}(\eta)$.

\item \underline{assumption \ref{asp5 in lemma:convergence of 2nd derivative} in Lemma \ref{lemma:asymptotic normality of a consistent root}}

Our aim in this part is to prove that $\left.\frac{\partial^2 \overline{\mathcal{F}_{\EB}}}{\partial \eta\partial\eta^{T}}\right|_{\overline{\eta}_{N}}$ converges to $A_{b}(\eta_{b}^{*})$ in probability for any sequence $\overline{\eta}_{N}$ such that ${\lim}_{N\to\infty}\overline{\eta}_{N}=\eta^{*}_{b}$ in probability, where
\begin{align}
A_{b}(\eta^{*}_{b})\triangleq{\plim}_{N\to\infty}\E\left.\left[\frac{\partial^2 \overline{\mathcal{F}_{\EB}}}{\partial \eta\partial\eta^{T}}\right]\right|_{\eta_{b}^{*}}.
\end{align}
Here $\plim$ denotes the limiting in probability.

Our proof consists of two steps.

The first step is to show that $\left.\frac{\partial^2 \overline{\mathcal{F}_{\EB}}}{\partial \eta\partial\eta^{T}}\right|_{\overline{\eta}_{N}}$ converges to $\left.\frac{\partial^2 W_{b}}{\partial \eta\partial\eta^{T}}\right|_{\eta^{*}_{b}}$ in probability for any sequence $\overline{\eta}_{N}$ such that ${\plim}_{N\to\infty}\overline{\eta}_{N}=\eta^{*}_{b}$.

The $(k,l)$th elements of the Hessian matrices of $\overline{\mathcal{F}_{\EB}}$ and $W_{b}$ are shown as follows, respectively,
\begin{align}
\frac{\partial^2 \overline{\mathcal{F}_{\EB}}}{\partial\eta_{k}\partial\eta_{l}}
    =&(\hat{\theta}^{\LS})^{T}\frac{\partial^2 S^{-1}}{\partial\eta_{k}\partial\eta_{l}}(\hat{\theta}^{\LS})
    +\Tr\left(\frac{\partial S^{-1}}{\partial\eta_{l}}\frac{\partial P}{\partial\eta_{k}}\right)\nonumber\\
    &+\Tr\left(S^{-1}\frac{\partial^2 P}{\partial\eta_{k}\partial\eta_{l}}\right)\\
\label{eq:2nd derivatives of Wb}
\frac{\partial^2 W_{b}}{\partial\eta_{k}\partial\eta_{l}}
    =&\theta_{0}^{T}\frac{\partial^2 P^{-1}}{\partial\eta_{k}\partial\eta_{l}}\theta_{0}+\Tr\left(\frac{\partial P^{-1}}{\partial\eta_{l}}\frac{\partial P}{\partial\eta_{k}}\right)\nonumber\\
    &+\Tr\left(P^{-1}\frac{\partial^2 P}{\partial\eta_{k}\partial\eta_{l}}\right).
\end{align}
Then the $(k,l)$th element of the difference between $\frac{\partial^2 \overline{\mathcal{F}_{\EB}}}{\partial \eta\partial\eta^{T}}$ and $\frac{\partial^2 W_{b}}{\partial \eta\partial\eta^{T}}$ can be represented as
\begin{align}
\frac{\partial^2 \overline{\mathcal{F}_{\EB}}}{\partial\eta_{k}\partial\eta_{l}}-\frac{\partial^2 W_{b}}{\partial\eta_{k}\partial\eta_{l}}
=&\Psi_{1,b}+\Tr(\Psi_{2,b}),
\end{align}
where
\begin{align}
\Psi_{1,b}
=&(\hat{\theta}^{\LS}-\theta_{0})^{T}\frac{\partial^2 S^{-1}}{\partial\eta_{k}\partial\eta_{l}}\hat{\theta}^{\LS}\nonumber\\
&+\theta_{0}^{T}\left(\frac{\partial^2 S^{-1}}{\partial\eta_{k}\partial\eta_{l}}-\frac{\partial^2 P^{-1}}{\partial\eta_{k}\partial\eta_{l}}\right)\hat{\theta}^{\LS}\nonumber\\
&+\theta_{0}^{T}\frac{\partial^2 P^{-1}}{\partial\eta_{k}\partial\eta_{l}}(\hat{\theta}^{\LS}-\theta_{0})\\
\Psi_{2,b}=&\left(\frac{\partial S^{-1}}{\partial\eta_{l}}-\frac{\partial P^{-1}}{\partial\eta_{l}}\right)\frac{\partial P}{\partial\eta_{k}}+(S^{-1}-P^{-1})\frac{\partial P}{\partial\eta_{k}}.
\end{align}
Under Assumption \ref{asp:interior points eta_eb_star eta_sy_star}, there exists a neighborhood $\overline{\Omega}_{2}\subset\Omega$ of $\eta_{b}^{*}$ such that for any $k=1,\cdots,p$ and $l=1,\cdots,p$, $\frac{\partial^2 P}{\partial\eta_{k}\partial\eta_{l}}$, $\frac{\partial P}{\partial\eta_{k}}$ and $P$ are all bounded. Since $\|S^{-1}\|_{F}\leq\|P^{-1}\|_{F}$ and
\begin{align}
\frac{\partial^2 P^{-1}}{\partial\eta_{k}\partial\eta_{l}}
=&P^{-1}\frac{\partial P}{\partial \eta_{l}}P^{-1}\frac{\partial P}{\partial \eta_{k}}P^{-1}
-P^{-1}\frac{\partial^2 P^{-1}}{\partial\eta_{k}\partial\eta_{l}}P^{-1}\nonumber\\
&+P^{-1}\frac{\partial P}{\partial \eta_{k}}P^{-1}\frac{\partial P}{\partial\eta_{l}}P^{-1}\\
\frac{\partial^2 S^{-1}}{\partial\eta_{k}\partial\eta_{l}}
=&S^{-1}\frac{\partial P}{\partial \eta_{l}}S^{-1}\frac{\partial P}{\partial \eta_{k}}S^{-1}
-S^{-1}\frac{\partial^2 P^{-1}}{\partial\eta_{k}\partial\eta_{l}}S^{-1}\nonumber\\
&+S^{-1}\frac{\partial P}{\partial \eta_{k}}S^{-1}\frac{\partial P}{\partial\eta_{l}}S^{-1},
\end{align}
it follows that $\frac{\partial^2 P^{-1}}{\partial\eta_{k}\partial\eta_{l}}$ and $\frac{\partial^2 S^{-1}}{\partial\eta_{k}\partial\eta_{l}}$ are both bounded $\forall\eta\in\overline{\Omega}_{2}$ with $k=1,\cdots,p$ and $l=1,\cdots,p$. As $N\to\infty$, since $(\Phi^{T}\Phi)/N \overset{a.s.}\to\Sigma$, we have $\hat{\theta}^{\LS}\overset{a.s.}\to\theta_{0}$, $S^{-1}\overset{a.s.}\to P^{-1}$, $\frac{\partial S^{-1}}{\partial\eta_{k}}\overset{a.s.}\to \frac{\partial P^{-1}}{\partial\eta_{k}}$ and $\frac{\partial^2 S^{-1}}{\partial\eta_{k}\partial\eta_{l}}\overset{a.s.}\to\frac{\partial^2 P^{-1}}{\partial\eta_{k}\partial\eta_{l}}$. The last two can be proved as follows,
\begin{align}
&\frac{\partial S^{-1}}{\partial \eta_{k}}-\frac{\partial P^{-1}}{\partial \eta_{k}}\nonumber\\
=&-S^{-1}\frac{\partial P}{\partial \eta_{k}}S^{-1}+P^{-1}\frac{\partial P}{\partial \eta_{k}}P^{-1}\nonumber\\
=&(P^{-1}-S^{-1})\frac{\partial P}{\partial \eta_{k}}P^{-1}\nonumber\\
&+S^{-1}\frac{\partial P}{\partial \eta_{k}}(P^{-1}-S^{-1})\overset{a.s.}\to 0
\end{align}
\begin{align}
&\frac{\partial^2 S^{-1}}{\partial\eta_{k}\partial\eta_{l}}-\frac{\partial^2 P^{-1}}{\partial\eta_{k}\partial\eta_{l}}\nonumber\\
%=&\left(S^{-1}\frac{\partial P}{\partial \eta_{l}}S^{-1}\frac{\partial P}{\partial \eta_{k}}S^{-1}-S^{-1}\frac{\partial^2 P^{-1}}{\partial\eta_{k}\partial\eta_{l}}S^{-1}\right.\nonumber\\
%&+\left.S^{-1}\frac{\partial P}{\partial \eta_{k}}S^{-1}\frac{\partial P}{\partial\eta_{l}}S^{-1}\right)\nonumber\\
%&-\left(P^{-1}\frac{\partial P}{\partial \eta_{l}}P^{-1}\frac{\partial P}{\partial \eta_{k}}P^{-1}-P^{-1}\frac{\partial^2 P^{-1}}{\partial\eta_{k}\partial\eta_{l}}P^{-1}\right.\nonumber\\
%&+\left.P^{-1}\frac{\partial P}{\partial \eta_{k}}P^{-1}\frac{\partial P}{\partial\eta_{l}}P^{-1}\right)\nonumber\\
=&(S^{-1}-P^{-1})\frac{\partial P}{\partial \eta_{l}}S^{-1}\frac{\partial P}{\partial \eta_{k}}S^{-1}\nonumber\\
&+P^{-1}\frac{\partial P}{\partial \eta_{l}}(S^{-1}-P^{-1})\frac{\partial P}{\partial \eta_{k}}S^{-1}\nonumber\\
&+P^{-1}\frac{\partial P}{\partial\eta_{l}}P^{-1}\frac{\partial P}{\partial \eta_{k}}(S^{-1}-P^{-1})\nonumber\\
&+(P^{-1}-S^{-1})\frac{\partial^2 P^{-1}}{\partial\eta_{k}\partial\eta_{l}}P^{-1}\nonumber\\
&+S^{-1}\frac{\partial^2 P^{-1}}{\partial\eta_{k}\partial\eta_{l}}(P^{-1}-S^{-1})\nonumber\\
&+(S^{-1}-P^{-1})\frac{\partial P}{\partial \eta_{k}}S^{-1}\frac{\partial P}{\partial \eta_{l}}S^{-1}\nonumber\\
&+P^{-1}\frac{\partial P}{\partial \eta_{k}}(S^{-1}-P^{-1})\frac{\partial P}{\partial \eta_{l}}S^{-1}\nonumber\\
&+P^{-1}\frac{\partial P}{\partial\eta_{k}}P^{-1}\frac{\partial P}{\partial \eta_{l}}(S^{-1}-P^{-1})\overset{a.s.}\to 0.
\end{align}

Note that $\|\hat{\theta}^{\LS}\|_{2}=O_{p}(1)$, $\|\hat{\theta}^{\LS}-\theta_{0}\|_{2}=O_{p}(1/\sqrt{N})$, $\|S^{-1}-P^{-1}\|_{F}=O_{p}(1/N)$, $\left\|\frac{\partial S^{-1}}{\partial\eta_{k}}-\frac{\partial P^{-1}}{\partial\eta_{k}}\right\|_{F}=O_{p}(1/N)$ and  $\left\|\frac{\partial^2 S^{-1}}{\partial\eta_{k}\partial\eta_{l}}-\frac{\partial^2 P^{-1}}{\partial\eta_{k}\partial\eta_{l}}\right\|_{F}=O_{p}(1/N)$.
Thus $\Psi_{1,b}$ and $\Psi_{2,b}$ converge to zero almost surely and uniformly in $\overline{\Omega}_{2}$, which implies that $\frac{\partial^2 \overline{\mathcal{F}_{\EB}}}{\partial \eta\partial\eta^{T}}$ converges to $\frac{\partial^2 W_{b}}{\partial \eta\partial\eta^{T}}$ in probability and uniformly in $\overline{\Omega}_{2}$. From Lemma \ref{lemma:convergence in probability}, $\left.\frac{\partial^2 \overline{\mathcal{F}_{\EB}}}{\partial \eta\partial\eta^{T}}\right|_{\overline{\eta}_{N}}$ converges to $\left.\frac{\partial^2 W_{b}}{\partial \eta\partial\eta^{T}}\right|_{\eta^{*}_{b}}$ in probability for any sequence $\overline{\eta}_{N}$ such that ${\plim}_{N\to\infty}\overline{\eta}_{N}=\eta^{*}_{b}$.

The second step is to show that
\begin{align}
A_{b}(\eta^{*}_{b})\triangleq{\plim}_{N\to\infty}\E\left.\left[\frac{\partial^2 \overline{\mathcal{F}_{\EB}}}{\partial \eta\partial\eta^{T}}\right]\right|_{\eta_{b}^{*}}
=\left.\frac{\partial^2 W_{b}}{\partial \eta\partial\eta^{T}}\right|_{\eta^{*}_{b}}.
\end{align}
Under the assumption of $V\sim \mathcal{N}({\0},\sigma^2I_{N})$, we have
\begin{align}
\hat{\theta}^{\LS}\sim\mathcal{N}(\Phi\theta_{0},\sigma^2(\Phi^{T}\Phi)^{-1}).
\end{align}
Based on Lemma \ref{lemma:mean of quartic forms}, the $(k,l)$th element of $A_{b}(\eta^{*}_{b})$ is
\begin{align}
&A_{b}(\eta^{*}_{b})_{k,l}\nonumber\\
=&{\plim}_{N\to\infty} \left[\Tr\left(\sigma^2(\Phi^{T}\Phi)^{-1}\frac{\partial^2 S^{-1}}{\partial\eta_{k}\partial\eta_{l}}\right)
+\theta_{0}^{T}\frac{\partial^2 S^{-1}}{\partial\eta_{k}\partial\eta_{l}}\theta_{0}\right.\nonumber\\
&+\left.\left.\Tr\left(\frac{\partial S^{-1}}{\partial\eta_{l}}\frac{\partial P}{\partial\eta_{k}}\right)+\Tr\left(S^{-1}\frac{\partial^2 P}{\partial\eta_{k}\partial\eta_{l}}\right)\right]\right|_{\eta^{*}_{b}}.
\end{align}
Under the assumption that as $N\to\infty$, $(\Phi^{T}\Phi)/N \overset{a.s.}\to \Sigma\succ0$, it can be derived that $(\Phi^{T}\Phi)^{-1}\overset{a.s.}\to 0$, $S^{-1} \overset{a.s.}\to P^{-1}$, $\frac{\partial S^{-1}}{\partial \eta_{k}} \overset{a.s.}\to \frac{\partial P^{-1}}{\partial \eta_{k}}$ and $\frac{\partial^2 S^{-1}}{\partial\eta_{k}\partial\eta_{l}} \overset{a.s.}\to \frac{\partial^2 P^{-1}}{\partial\eta_{k}\partial\eta_{l}}$ as $N\to\infty$.
Then we have
\begin{align}
A_{b}(\eta^{*}_{b})_{k,l}=&\left\{\theta_{0}^{T}\frac{\partial^2 P^{-1}}{\partial\eta_{k}\partial\eta_{l}}\theta_{0}
                     +\Tr\left(\frac{\partial P^{-1}}{\partial \eta_{l}}\frac{\partial P}{\partial \eta_{k}}\right)\right.\nonumber\\
                     &+\left.\left.\Tr\left(P^{-1}\frac{\partial^2 P}{\partial\eta_{k}\partial\eta_{l}} \right)\right\}\right|_{\eta^{*}_{b}},
\end{align}
which is exactly equal to \eqref{eq:2nd derivatives of Wb} at $\eta_{b}^{*}$.

\item \underline{assumption \ref{asp6 in lemma:convergence of 1st derivative} in Lemma \ref{lemma:asymptotic normality of a consistent root}}

In this part, we show that as $N\to\infty$,
\begin{align}
\sqrt{N}\left.\frac{\partial \overline{\mathcal{F}_{\EB}}}{\partial \eta}\right|_{\eta_{b}^{*}} \overset{d.}\to \mathcal{N}({\0},B_{b}(\eta_{b}^{*})),
\end{align}
where
\begin{align}
B_{b}(\eta^{*}_{b})\triangleq &\plim_{N\to\infty}N\E\left(\left.\frac{\partial \overline{\mathcal{F}_{\EB}}}{\partial \eta}\right|_{\eta^{*}_{b}}\times \left.{\frac{\partial \overline{\mathcal{F}_{\EB}}}{\partial \eta^{T}}}\right|_{\eta^{*}_{b}}\right).
\end{align}

Our proof is made up of two steps.

The first step is to show that $\sqrt{N}\left.\frac{\partial \overline{\mathcal{F}_{\EB}}}{\partial \eta}\right|_{\eta_{b}^{*}}$ converges in distribution to a Gaussian distributed random vector with zero mean and the $(k,l)$th element of the limiting covariance matrix is
\begin{align}
\left.4\sigma^2\theta_{0}^{T}\frac{\partial P^{-1}}{\partial\eta_{k}}\Sigma^{-1}\frac{\partial P^{-1}}{\partial\eta_{l}}\theta_{0}\right|_{\eta_{b}^{*}}.
\end{align}

The $k$th elements of $\frac{\partial \overline{\mathcal{F}_{\EB}}}{\partial \eta}$ and $\frac{\partial W_{b}}{\partial \eta}$ can be written as
\begin{align}	
\frac{\partial \overline{\mathcal{F}_{\EB}}}{\partial \eta_{k}}=&(\hat{\theta}^{\LS})^{T}\frac{\partial S^{-1}}{\partial \eta_{k}}\hat{\theta}^{\LS}+\Tr\left(S^{-1}\frac{\partial P}{\partial \eta_{k}}\right)\\
\frac{\partial W_{b}}{\partial \eta_{k}}=&\theta_{0}^{T}\frac{\partial P^{-1}}{\partial \eta_{k}}\theta_{0}+\Tr\left(P^{-1}\frac{\partial P}{\partial \eta_{k}}\right).
\end{align}
Since $\left.\frac{\partial W_{b}}{\partial \eta_{k}}\right|_{\eta_{b}^{*}}=0$, it leads to
\begin{align}
\sqrt{N}\left.\frac{\partial \overline{\mathcal{F}_{\EB}}}{\partial \eta_{k}}\right|_{\eta_{b}^{*}}
=&\sqrt{N}\left.\left(\frac{\partial \overline{\mathcal{F}_{\EB}}}{\partial \eta_{k}}-\frac{\partial W_{b}}{\partial \eta_{k}}\right)\right|_{\eta_{b}^{*}}\nonumber\\
=&\sqrt{N}\left.\left[{\Upsilon_{1,b}}+{\Upsilon_{2,b}}\right]\right|_{\eta_{b}^{*}},
\end{align}
where
\begin{align}
\Upsilon_{1,b}
=&\theta_{0}^{T}\left(\frac{\partial S^{-1}}{\partial \eta_{k}}-\frac{\partial P^{-1}}{\partial \eta_{k}}\right)\hat{\theta}^{\LS}
+\Tr\left[(S^{-1}-P^{-1})\frac{\partial P}{\partial \eta_{k}}\right]\\
\Upsilon_{2,b}
=&(\hat{\theta}^{\LS}-\theta_{0})^{T}\frac{\partial S^{-1}}{\partial \eta_{k}}\hat{\theta}^{\LS}+\theta_{0}^{T}\frac{\partial P^{-1}}{\partial \eta_{k}}(\hat{\theta}^{\LS}-\theta_{0}).
\end{align}
\begin{enumerate}
\item For $\sqrt{N}\Upsilon_{1,b}|_{\eta_{b}^{*}}$, applying Lemma \ref{lemma:Op and convergence in probability} with $X_{N}=S^{-1}-P^{-1}$, $a_{N}=\frac{1}{N}$ and $w_{N}=\sqrt{N}$, we have
    \begin{align}\label{eq:convergence in probability  of sqrtN_S_inv_P_inv}
    \sqrt{N}(S^{-1}-P^{-1}) \overset{p}\to 0
    \end{align}
     as $N\to\infty$. Similarly, we can prove that as $N\to\infty$,
     \begin{align}\label{eq:convergence in probability of sqrtN_1st_derivative_S_inv_P_inv}
     \sqrt{N}\left(\frac{\partial S^{-1}}{\partial \eta_{k}}-\frac{\partial P^{-1}}{\partial \eta_{k}}\right) \overset{p}\to 0
     \end{align}
     with $X_{N}=\frac{\partial S^{-1}}{\partial \eta_{k}}-\frac{\partial P^{-1}}{\partial \eta_{k}}$, $a_{N}=\frac{1}{N}$ and $w_{N}=\sqrt{N}$ by Lemma \ref{lemma:Op and convergence in probability}.
      Note that as $N\to\infty$, $\frac{\partial S^{-1}}{\partial \eta_{k}}\overset{p}\to \frac{\partial P^{-1}}{\partial \eta_{k}}$, $\hat{\theta}^{\LS} \overset{p}\to \theta_{0}$, \eqref{eq:convergence in probability  of sqrtN_S_inv_P_inv} and \eqref{eq:convergence in probability of sqrtN_1st_derivative_S_inv_P_inv} will not change with the value of $\eta$. Thus, according to the sum and product rules of convergence in probability, it can be seen that as $N\to\infty$,
     \begin{align}
     \sqrt{N}\Upsilon_{1,b}|_{\eta_{b}^{*}}\overset{p}\to 0.
     \end{align}

\item For $\sqrt{N}\Upsilon_{2,b}|_{\eta_{b}^{*}}$, let us investigate the limiting in distribution of $\sqrt{N}(\hat{\theta}^{\LS}-\theta_{0})$ firstly, which can be rewritten as
\begin{align}
\sqrt{N}(\hat{\theta}^{\LS}-\theta_{0})=&\sqrt{N}(\Phi^{T}\Phi)^{-1}\Phi^{T}V\nonumber\\
=&[N(\Phi^{T}\Phi)^{-1}] \left[\sqrt{N}\frac{\Phi^{T}V}{N}\right].
\end{align}
Since as $N\to\infty$, $N(\Phi^{T}\Phi)^{-1}\overset{p}\to\Sigma^{-1}$ and $\sqrt{N}\frac{\Phi^{T}V}{N}\overset{d}\to\mathcal{N}(0,\sigma^2\Sigma)$, which can be proved by CLT, then it follows that
\begin{align}\label{eq:limiting distribution of sqrtN_theta_LS_theta0}
\sqrt{N}(\hat{\theta}^{\LS}-\theta_{0})\overset{d}\to\mathcal{N}(0,\sigma^2\Sigma^{-1}).
\end{align}
Note that as $N\to\infty$, $\frac{\partial S^{-1}}{\partial \eta_{k}}\overset{p}\to \frac{\partial P^{-1}}{\partial \eta_{k}}$, $\hat{\theta}^{\LS} \overset{p}\to \theta_{0}$, and \eqref{eq:limiting distribution of sqrtN_theta_LS_theta0} will not change with the value of $\eta$. Thus, according to the product rule of the limiting in distribution, we have as $N\to\infty$,
\begin{align}
\sqrt{N}\Upsilon_{2,b}|_{\eta_{b}^{*}} \overset{d}\to \mathcal{N}(0,\left.4\sigma^2\theta_{0}^{T}\frac{\partial P^{-1}}{\partial\eta_{k}}\Sigma^{-1}\frac{\partial P^{-1}}{\partial\eta_{k}}\theta_{0}\right|_{\eta_{b}^{*}}).
\end{align}
\end{enumerate}

Then we come to
\begin{align}
\sqrt{N}\left.\frac{\partial \overline{\mathcal{F}_{\EB}}}{\partial \eta_{k}}\right|_{\eta_{b}^{*}}\overset{d}\to\mathcal{N}(0,\left.4\sigma^2\theta_{0}^{T}\frac{\partial P^{-1}}{\partial\eta_{k}}\Sigma^{-1}\frac{\partial P^{-1}}{\partial\eta_{k}}\theta_{0}\right|_{\eta_{b}^{*}}).
\end{align}
Therefore, $\sqrt{N}\left.\frac{\partial \overline{\mathcal{F}_{\EB}}}{\partial \eta}\right|_{\eta_{b}^{*}}$ converges in distribution to a Gaussian distributed random vector with zero mean and the $(k,l)$th element of the limiting covariance matrix is
\begin{align}\label{eq:limiting covariance of 1st derivative of Feb}
\left.4\sigma^2\theta_{0}^{T}\frac{\partial P^{-1}}{\partial\eta_{k}}\Sigma^{-1}\frac{\partial P^{-1}}{\partial\eta_{l}}\theta_{0}\right|_{\eta_{b}^{*}}.
\end{align}

The second step is to show that the $(k,l)$th element of
\begin{align}
B_{b}(\eta^{*}_{b})\triangleq &\plim_{N\to\infty}N\E\left(\left.\frac{\partial \overline{\mathcal{F}_{\EB}}}{\partial \eta}\right|_{\eta^{*}_{b}}\times \left.{\frac{\partial \overline{\mathcal{F}_{\EB}}}{\partial \eta^{T}}}\right|_{\eta^{*}_{b}}\right)
\end{align}
equals \eqref{eq:limiting covariance of 1st derivative of Feb}.
By Lemma \ref{lemma:mean of quartic forms}, the $(k,l)$th element of $B(\eta^{*}_{b})$ can be rewritten as
\begin{align}
&B_{b}(\eta^{*}_{b})_{k,l}\nonumber\\
=&{\plim}_{N\to\infty} N\E\left[\left.\frac{\partial \overline{\mathcal{F}_{\EB}}}{\partial \eta_{k}}\right|_{\eta^{*}_{b}}\left.\frac{\partial \overline{\mathcal{F}_{\EB}}}{\partial \eta_{l}}\right|_{\eta^{*}_{b}}\right]\nonumber\\
=&\left\{4\sigma^2\theta_{0}^{T}\frac{\partial P^{-1}}{\partial \eta_{k}}\Sigma^{-1}\frac{\partial P^{-1}}{\partial \eta_{l}}\theta_{0}\right.\nonumber\\
&+\plim_{N\to\infty}\sqrt{N}\left[\theta_{0}^{T}\frac{\partial S^{-1}}{\partial \eta_{k}}\theta_{0}+ \Tr\left(S^{-1}\frac{\partial P}{\partial \eta_{k}}\right)\right]\nonumber\\
&\left.\left.\sqrt{N}\left[\theta_{0}^{T}\frac{\partial S^{-1}}{\partial \eta_{l}}\theta_{0}+ \Tr\left(S^{-1}\frac{\partial P}{\partial \eta_{l}}\right)\right]\right\}\right|_{\eta^{*}_{b}}.
\end{align}
Based on \eqref{eq:convergence in probability  of sqrtN_S_inv_P_inv} and \eqref{eq:convergence in probability of sqrtN_1st_derivative_S_inv_P_inv}, we can prove that
\begin{align}
&\plim_{N\to\infty}\left.\sqrt{N}\left[\theta_{0}^{T}\frac{\partial S^{-1}}{\partial \eta_{k}}\theta_{0}+ \Tr\left(S^{-1}\frac{\partial P}{\partial \eta_{k}}\right)\right]\right|_{\eta^{*}_{b}}\nonumber\\
=&\plim_{N\to\infty}\left.\sqrt{N}\left[\theta_{0}^{T}\frac{\partial S^{-1}}{\partial \eta_{k}}\theta_{0}+ \Tr\left(S^{-1}\frac{\partial P}{\partial \eta_{k}}\right)-\frac{\partial W_{b}}{\partial \eta_{k}}\right]\right|_{\eta^{*}_{b}}\nonumber\\
=&\plim_{N\to\infty}\left. \sqrt{N}\theta_{0}^{T}\left(\frac{\partial S^{-1}}{\partial \eta_{k}}-\frac{\partial P^{-1}}{\partial \eta_{k}}\right)\theta_{0}\right.\nonumber\\
&+\left.\sqrt{N}\Tr\left[(S^{-1}-P^{-1})\frac{\partial P}{\partial\eta_{k}}\right]
\right|_{\eta^{*}_{b}}=0.
\end{align}
Then we have
\begin{align}
B_{b}(\eta^{*}_{b})_{k,l}=\left.4\sigma^2\theta_{0}^{T}\frac{\partial P^{-1}}{\partial \eta_{k}}\Sigma^{-1}\frac{\partial P^{-1}}{\partial \eta_{l}}\theta_{0}\right|_{\eta^{*}_{b}}.
\end{align}
\end{itemize}

Finally, we apply Lemma \ref{lemma:asymptotic normality of a consistent root} with $M_{N}=N\overline{\mathcal{F}_{\EB}}$ and $\eta^{*}=\eta^{*}_{b}$ to prove the asymptotic normality of $\hat{\eta}_{\EB}-\eta^{*}_{b}$.

\subsection{Proof of Theorem \ref{thm:asymptotic normality of eta_sy difference}}

The asymptotic normality of $\hat{\eta}_{\Sy}-\eta_{y}^{*}$ can be shown through similar thoughts with $M_{N}=N\overline{\mathcal{F}_{\Sy}}$ and $\eta^{*}=\eta^{*}_{y}$.

\begin{itemize}
\item \underline{assumption \ref{asp1 in lemma:Euclidean space}, \ref{asp2 in lemma:measurability of F} and \ref{asp4 in lemma:continuity of 2nd derivative} in Lemma \ref{lemma:asymptotic normality of a consistent root}}

First of all, we show that $N\overline{\mathcal{F}_{\Sy}}(Y,\eta)$ is a measurable function of $Y$ for all $\eta\in\Omega$. Recall that
\begin{align}
\overline{\mathcal{F}_{\Sy}}
=&N\sigma^4[(\hat{\theta}^{\LS})^{T}S^{-T}(\Phi^{T}\Phi)^{-1}S^{-1}\hat{\theta}^{\LS}\nonumber\\
&-2\Tr((\Phi^{T}\Phi)^{-1}S^{-1})]\nonumber\\
=&N[\sigma^4Y^{T}Q^{-T}\Phi(\Phi^{T}\Phi)^{-1}\Phi^{T}Q^{-1}Y\nonumber\\
&+2\sigma^2\Tr((\Phi^{T}\Phi+\sigma^2 P^{-1})^{-1}\Phi^{T}\Phi-I_{n})].
\end{align}
It can be noticed that $N\overline{\mathcal{F}_{\Sy}}(Y,\eta)$ is a continuous function of $Y$ for all $\eta$ in $\Omega$, which indicates that $\forall\eta\in\Omega$, $N\overline{\mathcal{F}_{\Sy}}(Y,\eta)$ is a measurable function of $Y$.

Then we show that $\frac{\partial N\overline{\mathcal{F}_{\Sy}}}{\partial\eta}$ exists and is continuous in an open neighbourhood of $\eta^{*}_{y}$, and $\frac{\partial^2 N\overline{\mathcal{F}_{\Sy}}}{\partial\eta\partial\eta^{T}}$ exists and is continuous in an open and convex neighbourhood of $\eta_{y}^{*}$.

For common kernel structures, like SS \eqref{eq:SS kernel}, DC \eqref{eq:DC kernel} and TC \eqref{eq:TC kernel}, $P(\eta)$ is a continuous, differentiable and second-order differentiable function of $\eta$ in $\Omega$. Meanwhile, the first-order derivative and the second-order derivative of $P(\eta)$ with respect to $\eta$ are both continuous for all $\eta\in\Omega$. Then under Assumption \ref{asp:interior points eta_eb_star eta_sy_star}, it can be derived that there exists an open neighbourhood of $\eta^{*}_{y}$ such that $\frac{\partial N\overline{\mathcal{F}_{\Sy}}}{\partial\eta}$ exists and is continuous. There also exists an open and convex neighbourhood of $\eta_{y}^{*}$, in which $\frac{\partial^2 N\overline{\mathcal{F}_{\Sy}}}{\partial\eta\partial\eta^{T}}$ exists and is continuous.

\item \underline{assumption \ref{asp3 in lemma:convergence of F} in Lemma \ref{lemma:asymptotic normality of a consistent root}}

In this part, we prove that $\overline{\mathcal{F}_{\Sy}}(\eta)$ converges to $W_{y}(\eta)$ in probability and uniformly in one neighbourhood of $\eta_{y}^{*}$.

As mentioned in \eqref{eq:difference of Fsy and Wy}, \eqref{eq:term1 difference of Fsy and Wy} and \eqref{eq:term2 difference of Fsy and Wy}, $\overline{\mathcal{F}_{\Sy}}-W_{y}$ is computed with two parts: $D_{1,y}$ and $\Tr(D_{2,y})$. Recall that
\begin{align}
D_{1,y}	=&\sigma^4(\hat{\theta}^{\LS}-\theta_{0})^{T}S^{-T}N(\Phi^{T}\Phi)^{-1}S^{-1}\hat{\theta}^{\LS}\nonumber\\	&+\sigma^4\theta_{0}^{T}(S^{-1}-P^{-1})^{T}N(\Phi^{T}\Phi)^{-1}S^{-1}\hat{\theta}^{\LS}\nonumber\\	&+\sigma^4\theta_{0}^{T}P^{-T}(N(\Phi^{T}\Phi)^{-1}-\Sigma^{-1})S^{-1}\hat{\theta}^{\LS}\nonumber\\
&+\sigma^4\theta_{0}^{T}P^{-T}\Sigma^{-1}(S^{-1}-P^{-1})\hat{\theta}^{\LS}\nonumber\\
&+\sigma^4\theta_{0}^{T}P^{-T}\Sigma^{-1}P^{-1}(\hat{\theta}^{\LS}-\theta_{0})\\
D_{2,y}
=&2\sigma^4(\Sigma^{-1}-N(\Phi^{T}\Phi)^{-1})P^{-1}\nonumber\\
&+2\sigma^4N(\Phi^{T}\Phi)^{-1}(P^{-1}-S^{-1}).
\end{align}
Based on Assumption \ref{asp:interior points eta_eb_star eta_sy_star}, there exists $\overline{\Omega}_{3}(\eta_{y}^{*})\subset\Omega$ such that $0<d_{3}\leq\|P(\eta)\|_{F}\leq d_{4}<\infty$ and $\|S^{-1}\|_{F}\leq\|P^{-1}\|_{F}\leq 1/d_{3}$ for all $\eta\in\overline{\Omega}_{3}$. Noting that $N(\Phi^{T}\Phi)^{-1}\overset{a.s.}\to\Sigma$, $\hat{\theta}^{\LS} \overset{a.s.}\to \theta_{0}$, $S^{-1} \overset{a.s.}\to P^{-1}$ as $N\to\infty$, and $\|\hat{\theta}^{\LS}-\theta_{0}\|_{2}=O_{p}(1/\sqrt{N})$, $\|\hat{\theta}^{\LS}\|_{2}=O_{p}(1)$, $\|S^{-1}-P^{-1}\|_{F}=O_{p}(1/N)$ and $\|N(\Phi^{T}\Phi)^{-1}-\Sigma^{-1}\|_{F}=O_{p}(\delta_{N})$, we can show that each term of $D_{1,y}$ and $D_{2,y}$ converges to zero in probability and uniformly for any $\eta$ in $\overline{\Omega_{3}}$. Thus, as $N\to\infty$, $\overline{\mathcal{F}_{\Sy}}$ converges to $W_{y}$ in probability and uniformly $\forall\eta\in\overline{\Omega}_{3}$.

Under Assumption \ref{asp:2} and \ref{asp:interior points eta_eb_star eta_sy_star}, we can show that $W_{y}$ attains a strict local minimum at $\eta_{y}^{*}$.

\item \underline{assumption \ref{asp5 in lemma:convergence of 2nd derivative} in Lemma \ref{lemma:asymptotic normality of a consistent root}}

Our goal in this part is to prove that $\left.\frac{\partial^2 \overline{\mathcal{F}_{\Sy}}}{\partial \eta\partial\eta^{T}}\right|_{\overline{\eta}_{N}}$ converges to $C_{y}(\eta_{y}^{*})$ in probability for any sequence $\overline{\eta}_{N}$ such that ${\lim}_{N\to\infty}\overline{\eta}_{N}=\eta^{*}_{y}$ in probability, where
\begin{align}
C_{y}(\eta^{*}_{y})\triangleq{\plim}_{N\to\infty}\E\left.\left[\frac{\partial^2 \overline{\mathcal{F}_{\Sy}}}{\partial \eta\partial\eta^{T}}\right]\right|_{\eta_{y}^{*}}.
\end{align}

Detailed procedure consists of two steps.

The first step is to prove that $\left.\frac{\partial^2 \overline{\mathcal{F}_{\Sy}}}{\partial \eta\partial\eta^{T}}\right|_{\overline{\eta}_{N}}$ converges to $\left.\frac{\partial^2 W_{y}}{\partial \eta\partial\eta^{T}}\right|_{\eta^{*}_{y}}$ in probability for any sequence $\overline{\eta}_{N}$ such that ${\lim}_{N\to\infty}\overline{\eta}_{N}=\eta^{*}_{y}$ in probability.

The $(k,l)$th elements of Hessian matrices of $\overline{\mathcal{F}_{\Sy}}$ and $W_{y}$ are shown as follows, respectively,
\begin{align}
\frac{\partial^2 \overline{\mathcal{F}_{\Sy}}}{\partial\eta_{k}\partial\eta_{l}}
=&2\sigma^4(\hat{\theta}^{\LS})^{T}\frac{\partial S^{-T}}{\partial\eta_{l}}N(\Phi^{T}\Phi)^{-1}\frac{\partial S^{-1}}{\partial\eta_{k}}\hat{\theta}^{\LS}\nonumber\\
&+2\sigma^4(\hat{\theta}^{\LS})^{T}S^{-T}N(\Phi^{T}\Phi)^{-1}\frac{\partial^2 S^{-1}}{\partial\eta_{k}\partial\eta_{l}}\hat{\theta}^{\LS}\nonumber\\
&-2\sigma^4\Tr\left(N(\Phi^{T}\Phi)^{-1}\frac{\partial^2 S^{-1}}{\partial\eta_{k}\partial\eta_{l}}\right)
\end{align}
\begin{align}
\label{eq:2nd derivatives of Wy}
\frac{\partial^2 W_{y}}{\partial\eta_{k}\partial\eta_{l}}
=&2\sigma^4\theta_{0}^{T}\frac{\partial P^{-T}}{\partial\eta_{l}}\Sigma^{-1}\frac{\partial P^{-1}}{\partial\eta_{k}}\theta_{0}\nonumber\\
&+2\sigma^4\theta_{0}^{T}P^{-T}\Sigma^{-1}\frac{\partial^2 P^{-1}}{\partial\eta_{k}\partial\eta_{l}}\theta_{0}\nonumber\\
&-2\sigma^4\Tr\left(\Sigma^{-1}\frac{\partial^2 P^{-1}}{\partial\eta_{k}\partial\eta_{l}}\right).
\end{align}
Then the $(k,l)$th element of the difference between $\frac{\partial^2 \overline{\mathcal{F}_{\Sy}}}{\partial \eta\partial\eta^{T}}$ and $\frac{\partial^2 W_{y}}{\partial \eta\partial\eta^{T}}$ can be represented as
\begin{align}
\frac{\partial^2 \overline{\mathcal{F}_{\Sy}}}{\partial\eta_{k}\partial\eta_{l}}-\frac{\partial^2 W_{y}}{\partial\eta_{k}\partial\eta_{l}}
=&\Psi_{1,y}+\Tr(\Psi_{2,y}),
\end{align}
where
\begin{align}
\Psi_{1,y}=&2\sigma^4(\hat{\theta}^{\LS}-\theta_{0})^{T}\frac{\partial S^{-T}}{\partial\eta_{l}}N(\Phi^{T}\Phi)^{-1}\frac{\partial S^{-1}}{\partial\eta_{k}}\hat{\theta}^{\LS}\nonumber\\
&+2\sigma^4(\theta_{0})^{T}\left(\frac{\partial S^{-T}}{\partial\eta_{l}}-\frac{\partial P^{-T}}{\partial\eta_{l}}\right)N(\Phi^{T}\Phi)^{-1}\frac{\partial S^{-1}}{\partial\eta_{k}}\hat{\theta}^{\LS}\nonumber\\
&+2\sigma^4(\theta_{0})^{T}\frac{\partial P^{-T}}{\partial\eta_{l}}(N(\Phi^{T}\Phi)^{-1}-\Sigma^{-1})\frac{\partial S^{-1}}{\partial\eta_{k}}\hat{\theta}^{\LS}\nonumber\\
&+2\sigma^4(\theta_{0})^{T}\frac{\partial P^{-T}}{\partial\eta_{l}}\Sigma^{-1}\left(\frac{\partial S^{-1}}{\partial\eta_{k}}-\frac{\partial P^{-1}}{\partial\eta_{k}}\right)\hat{\theta}^{\LS}\nonumber\\
&+2\sigma^4(\theta_{0})^{T}\frac{\partial P^{-T}}{\partial\eta_{l}}\Sigma^{-1}\frac{\partial P^{-1}}{\partial\eta_{k}}(\hat{\theta}^{\LS}-\theta_{0})\nonumber\\
&+2\sigma^4(\hat{\theta}^{\LS}-\theta_{0})^{T}S^{-T}N(\Phi^{T}\Phi)^{-1}\frac{\partial^2 S^{-1}}{\partial\eta_{k}\partial\eta_{l}}\hat{\theta}^{\LS}\nonumber\\
&+2\sigma^4(\theta_{0})^{T}(S^{-1}-P^{-1})^{T}N(\Phi^{T}\Phi)^{-1}\frac{\partial^2 S^{-1}}{\partial\eta_{k}\partial\eta_{l}}\hat{\theta}^{\LS}\nonumber\\
&+2\sigma^4(\theta_{0})^{T}P^{-T}(N(\Phi^{T}\Phi)^{-1}-\Sigma^{-1})\frac{\partial^2 S^{-1}}{\partial\eta_{k}\partial\eta_{l}}\hat{\theta}^{\LS}\nonumber\\
&+2\sigma^4(\theta_{0})^{T}P^{-T}\Sigma^{-1}\left(\frac{\partial^2 S^{-1}}{\partial\eta_{k}\partial\eta_{l}}-\frac{\partial^2 P^{-1}}{\partial\eta_{k}\partial\eta_{l}}\right)\hat{\theta}^{\LS}\nonumber\\
&+2\sigma^4(\theta_{0})^{T}P^{-T}\Sigma^{-1}\frac{\partial^2 P^{-1}}{\partial\eta_{k}\partial\eta_{l}}(\hat{\theta}^{\LS}-\theta_{0})\\
\Psi_{2,y}=&2\sigma^4(\Sigma^{-1}-N(\Phi^{T}\Phi)^{-1})\frac{\partial^2 P^{-1}}{\partial\eta_{k}\partial\eta_{l}}\nonumber\\
&+2\sigma^4N(\Phi^{T}\Phi)^{-1}\left(\frac{\partial^2 P^{-1}}{\partial\eta_{k}\partial\eta_{l}}-\frac{\partial^2 S^{-1}}{\partial\eta_{k}\partial\eta_{l}}\right).
\end{align}
Under Assumption \ref{asp:interior points eta_eb_star eta_sy_star}, there exists a neighborhood $\overline{\Omega}_{4}\subset\Omega$ of $\eta_{y}^{*}$ such that for any $k=1,\cdots,p$ and $l=1,\cdots,p$, $\frac{\partial^2 P}{\partial\eta_{k}\partial\eta_{l}}$, $\frac{\partial P}{\partial\eta_{k}}$ and $P$ are all bounded, which leads to that
$\frac{\partial^2 P^{-1}}{\partial\eta_{k}\partial\eta_{l}}$ and $\frac{\partial^2 S^{-1}}{\partial\eta_{k}\partial\eta_{l}}$ are both bounded $\forall\eta\in\overline{\Omega}_{4}$ with $k=1,\cdots,p$ and $l=1,\cdots,p$. As $N\to\infty$, we have $N(\Phi^{T}\Phi)^{-1} \overset{a.s.}\to \Sigma^{-1}$, $\hat{\theta}^{\LS}\overset{a.s.}\to\theta_{0}$, $S^{-1}\overset{a.s.}\to P^{-1}$, $\frac{\partial S^{-1}}{\partial\eta_{k}}\overset{a.s.}\to \frac{\partial P^{-1}}{\partial\eta_{k}}$ and $\frac{\partial^2 S^{-1}}{\partial\eta_{k}\partial\eta_{l}}\overset{a.s.}\to\frac{\partial^2 P^{-1}}{\partial\eta_{k}\partial\eta_{l}}$. Also note that $\|N(\Phi^{T}\Phi)^{-1}-\Sigma^{-1}\|_{F}=O_{p}(\delta_{N})$,  $\|\hat{\theta}^{\LS}-\theta_{0}\|_{2}=O_{p}(1/\sqrt{N})$, $\|S^{-1}-P^{-1}\|_{F}=O_{p}(1/N)$, $\left\|\frac{\partial S^{-1}}{\partial\eta_{k}}-\frac{\partial P^{-1}}{\partial\eta_{k}}\right\|_{F}=O_{p}(1/N)$ and  $\left\|\frac{\partial^2 S^{-1}}{\partial\eta_{k}\partial\eta_{l}}-\frac{\partial^2 P^{-1}}{\partial\eta_{k}\partial\eta_{l}}\right\|_{F}=O_{p}(1/N)$. So each term in $\Psi_{1,y}$ and $\Psi_{2,y}$ converges to zero in probability and uniformly, $\forall \eta\in\overline{\Omega}_{4}(\eta_{y}^{*})$.
Therefore, $\frac{\partial^2 \overline{\mathcal{F}_{\Sy}}}{\partial \eta\partial\eta^{T}}$ converges to $\frac{\partial^2 W_{y}}{\partial \eta\partial\eta^{T}}$ in probability and uniformly in $\overline{\Omega}_{4}(\eta_{y}^{*})$. From Lemma \ref{lemma:convergence in probability}, $\left.\frac{\partial^2 \overline{\mathcal{F}_{\Sy}}}{\partial \eta\partial\eta^{T}}\right|_{\overline{\eta}_{N}}$ converges to $\left.\frac{\partial^2 W_{y}}{\partial \eta\partial\eta^{T}}\right|_{\eta^{*}_{y}}$ in probability for any sequence $\overline{\eta}_{N}$ such that ${\lim}_{N\to\infty}\overline{\eta}_{N}=\eta^{*}_{y}$ in probability.

The second step is to show that
\begin{align}
C_{y}(\eta^{*}_{y})\triangleq{\plim}_{N\to\infty}\E\left.\left[\frac{\partial^2 \overline{\mathcal{F}_{\Sy}}}{\partial \eta\partial\eta^{T}}\right]\right|_{\eta_{y}^{*}}
=\left.\frac{\partial^2 W_{y}}{\partial \eta\partial\eta^{T}}\right|_{\eta^{*}_{y}}.
\end{align}

Since $\hat{\theta}^{\LS}\sim\mathcal{N}(\Phi\theta_{0},\sigma^2(\Phi^{T}\Phi)^{-1})$, we can apply Lemma \ref{lemma:mean of quartic forms} to obtain the $(k,l)$th element of $C(\eta^{*}_{y})$ as
\begin{align}
C_{y}(\eta^{*}_{y})_{k,l}=&\plim_{N\to\infty}2\sigma^6\Tr\left[\frac{\partial S^{-T}}{\partial\eta_{l}}N(\Phi^{T}\Phi)^{-1}\frac{\partial S^{-1}}{\partial\eta_{k}}(\Phi^{T}\Phi)^{-1}\right]\nonumber\\
&+2\sigma^4\theta_{0}^{T}\frac{\partial S^{-T}}{\partial\eta_{l}}N(\Phi^{T}\Phi)^{-1}\frac{\partial S^{-1}}{\partial\eta_{k}}\theta_{0}\nonumber\\
&+2\sigma^6\Tr\left[S^{-T}N(\Phi^{T}\Phi)^{-1}\frac{\partial^2 S^{-1}}{\partial\eta_{k}\partial\eta_{l}}(\Phi^{T}\Phi)^{-1}\right]\nonumber\\
&+2\sigma^4\theta_{0}^{T}S^{-T}N(\Phi^{T}\Phi)^{-1}\frac{\partial^2 S^{-1}}{\partial\eta_{k}\partial\eta_{l}}\theta_{0}\nonumber\\
&-\left.2\sigma^4\Tr\left(N(\Phi^{T}\Phi)^{-1}\frac{\partial^2 S^{-1}}{\partial\eta_{k}\partial\eta_{l}}\right)\right|_{\eta^{*}_{y}}
\end{align}
Since as $N\to\infty$, we have $\Phi^{T}\Phi\overset{a.s.}\to 0$, $N(\Phi^{T}\Phi)^{-1}\overset{a.s.}\to\Sigma^{-1}$, $S^{-1} \overset{a.s.}\to P^{-1}$, $\frac{\partial S^{-1}}{\partial \eta_{k}} \overset{a.s.}\to \frac{\partial P^{-1}}{\partial \eta_{k}}$ and $\frac{\partial^2 S^{-1}}{\partial\eta_{k}\partial\eta_{l}} \overset{a.s.}\to \frac{\partial^2 P^{-1}}{\partial\eta_{k}\partial\eta_{l}}$, $C_{y}(\eta_{y}^{*})$ can be reduced as
\begin{align}
C_{y}(\eta_{y}^{*})
=&\left\{2\sigma^4\theta_{0}^{T}\frac{\partial P^{-T}}{\partial\eta_{l}}{\Sigma}^{-1}\frac{\partial P^{-1}}{\partial\eta_{k}}\theta_{0}\right.\nonumber\\
&+2\sigma^4\theta_{0}^{T}P^{-T}{\Sigma}^{-1}\frac{\partial^2 P^{-1}}{\partial\eta_{k}\partial\eta_{l}}\theta_{0}\nonumber\\
&-\left.\left.2\sigma^4\Tr\left({\Sigma}^{-1}\frac{\partial^2 P^{-1}}{\partial\eta_{k}\partial\eta_{l}}\right)\right\}\right|_{\eta^{*}_{y}},
\end{align}
which is exactly equal to \eqref{eq:2nd derivatives of Wy} at $\eta_{y}^{*}$.

\item \underline{assumption \ref{asp6 in lemma:convergence of 1st derivative} in Lemma \ref{lemma:asymptotic normality of a consistent root}}

In this part, we aim to prove that $N\to\infty$,
\begin{align}
\sqrt{N}\left.\frac{\partial \overline{\mathcal{F}_{\Sy}}}{\partial \eta}\right|_{\eta_{y}^{*}} \overset{d.}\to \mathcal{N}({\0},D_{y}(\eta_{y}^{*})),
\end{align}
where
\begin{align}
D_{y}(\eta^{*}_{y})\triangleq &\plim_{N\to\infty}N\E\left(\left.\frac{\partial \overline{\mathcal{F}_{\Sy}}}{\partial \eta}\right|_{\eta^{*}_{y}}\times \left.{\frac{\partial \overline{\mathcal{F}_{\Sy}}}{\partial \eta^{T}}}\right|_{\eta^{*}_{y}}\right).
\end{align}

Our proof consists of two steps.

The first step is to show that $\sqrt{N}\left.\frac{\partial \overline{\mathcal{F}_{\Sy}}}{\partial \eta}\right|_{\eta_{y}^{*}} $ converges in distribution to a Gaussian distributed random vector with zero mean and the $(k,l)$th element of the limiting covariance matrix is
\begin{align}\label{eq:limiting covariance of 1st derivative of Fsy}
&4\sigma^{10}\left\{\theta_{0}^{T}\left[P^{-1}{\Sigma}^{-1}\frac{\partial P^{-1}}{\partial\eta_{k}} + \frac{\partial P^{-1}}{\partial\eta_{k}}{\Sigma}^{-1}P^{-1} \right]{\Sigma}^{-1}\right.\nonumber\\
&\left.\left.\left[P^{-1}{\Sigma}^{-1}\frac{\partial P^{-1}}{\partial\eta_{l}} + \frac{\partial P^{-1}}{\partial\eta_{l}}{\Sigma}^{-1}P^{-1} \right]\theta_{0}\right\}\right|_{\eta^{*}_{y}}.
\end{align}
The $k$th elements of $\frac{\partial\overline{\mathcal{F}_{\Sy}}}{\partial\eta}$ and $\frac{\partial W_{y}}{\partial\eta}$ can be written as
\begin{align}
\frac{\partial\overline{\mathcal{F}_{\Sy}}}{\partial\eta_{k}}
=&2\sigma^4(\hat{\theta}^{\LS})^{T}S^{-T}N(\Phi^{T}\Phi)^{-1}\frac{\partial S^{-1}}{\partial\eta_{k}}\hat{\theta}^{\LS}\nonumber\\
&-2\sigma^4\Tr\left(N(\Phi^{T}\Phi)^{-1}\frac{\partial S^{-1}}{\partial\eta_{k}}\right)\\
\frac{\partial W_{y}}{\partial\eta_{k}}
=&2\sigma^4\theta_{0}^{T}P^{-T}\Sigma^{-1}\frac{\partial P^{-1}}{\partial\eta_{k}}\theta_{0}\nonumber\\
&-2\sigma^4\Tr\left(\Sigma^{-1}\frac{\partial P^{-1}}{\partial\eta_{k}}\right).
\end{align}
Since $\left.\frac{\partial W_{y}}{\partial\eta_{k}}\right|_{\eta_{y}^{*}}=0$, we have
\begin{align}
\sqrt{N}\left.\frac{\partial \overline{\mathcal{F}_{\Sy}}}{\partial \eta}\right|_{\eta_{y}^{*}}
=&\sqrt{N}\left.\left[\frac{\partial\overline{\mathcal{F}_{\Sy}}}{\partial\eta_{k}}-\frac{\partial W_{y}}{\partial\eta_{k}}\right]\right|_{\eta_{y}^{*}}\nonumber\\
=&\sqrt{N}(\Upsilon_{1,y}|_{\eta_{y}^{*}}+\Upsilon_{2,y}|_{\eta_{y}^{*}}),
\end{align}
where
\begin{align}
\Upsilon_{1,y}
=&2\sigma^4\theta_{0}^{T}(S^{-1}-P^{-1})^{T}N(\Phi^{T}\Phi)^{-1}\frac{\partial S^{-1}}{\partial\eta_{k}}\hat{\theta}^{\LS}\nonumber\\
&+2\sigma^4\theta_{0}^{T}P^{-T}(N(\Phi^{T}\Phi)^{-1}-\Sigma^{-1})\frac{\partial S^{-1}}{\partial\eta_{k}}\hat{\theta}^{\LS}\nonumber\\
&+2\sigma^4\theta_{0}^{T}P^{-T}\Sigma^{-1}\left(\frac{\partial S^{-1}}{\partial\eta_{k}}-\frac{\partial P^{-1}}{\partial\eta_{k}}\right)\hat{\theta}^{\LS}\nonumber\\
&+2\sigma^4\Tr\left[(\Sigma^{-1}-N(\Phi^{T}\Phi)^{-1})\frac{\partial P^{-1}}{\partial\eta_{k}}\right]\nonumber\\
&+2\sigma^4\Tr\left[N(\Phi^{T}\Phi)^{-1}\left(\frac{\partial P^{-1}}{\partial\eta_{k}}-\frac{\partial S^{-1}}{\partial\eta_{k}}\right)\right]\\
\Upsilon_{2,y}
=&2\sigma^4(\hat{\theta}^{\LS}-\theta_{0})^{T}S^{-T}N(\Phi^{T}\Phi)^{-1}\frac{\partial S^{-1}}{\partial\eta_{k}}\hat{\theta}^{\LS}\nonumber\\
&+2\sigma^4\theta_{0}^{T}P^{-T}\Sigma^{-1}\frac{\partial P^{-1}}{\partial\eta_{k}}(\hat{\theta}^{\LS}-\theta_{0}).
\end{align}
\begin{enumerate}
\item For $\sqrt{N}\Upsilon_{1,y}|_{\eta_{y}^{*}}$, if we define that $w_{N}=\frac{1}{\delta_{N}\sqrt{N}}$, from $\delta_{N}=o(1/\sqrt{N})$ as $N\to\infty$, it can be derived that
    \begin{align}
    \lim_{N\to\infty}\frac{1}{w_{N}}=\lim_{N\to\infty}\frac{\delta_{N}}{1/\sqrt{N}}=0,
    \end{align}
    namely $w_{N}\to\infty$ as $N\to\infty$.
    Thus we can use Lemma \ref{lemma:Op and convergence in probability} with $X_{N}=N(\Phi^{T}\Phi)^{-1}-\Sigma^{-1}$, $a_{N}=\delta_{N}$ and $w_{N}=\frac{1}{\delta_{N}\sqrt{N}}$ to obtain
    \begin{align}\label{eq:limiting in probability of sqrtN_PP_Sigma_inv}
    \sqrt{N}[N(\Phi^{T}\Phi)^{-1}-\Sigma^{-1}]\overset{p}\to 0.
    \end{align}
    As $N\to\infty$, $N(\Phi^{T}\Phi)^{-1}\overset{p}\to\Sigma^{-1}$, $S^{-1} \overset{p}\to P^{-1}$, $\frac{\partial S^{-1}}{\partial \eta_{k}}\overset{p}\to \frac{\partial P^{-1}}{\partial \eta_{k}}$, $\hat{\theta}^{\LS} \overset{p}\to \theta_{0}$, \eqref{eq:convergence in probability  of sqrtN_S_inv_P_inv}, \eqref{eq:convergence in probability of sqrtN_1st_derivative_S_inv_P_inv} and \eqref{eq:limiting in probability of sqrtN_PP_Sigma_inv} do not change with the value of $\eta$. It gives that as $N\to\infty$,
    \begin{align}
    \sqrt{N}\Upsilon_{1,y}|_{\eta_{y}^{*}} \overset{p}\to 0.
    \end{align}

\item For $\sqrt{N}\Upsilon_{1,y}|_{\eta_{y}^{*}}$, as $N\to\infty$, since $N(\Phi^{T}\Phi)^{-1}\overset{p}\to\Sigma^{-1}$, $S^{-1} \overset{p}\to P^{-1}$, $\frac{\partial S^{-1}}{\partial \eta_{k}}\overset{p}\to \frac{\partial P^{-1}}{\partial \eta_{k}}$, $\hat{\theta}^{\LS} \overset{p}\to \theta_{0}$ and \eqref{eq:limiting distribution of sqrtN_theta_LS_theta0} do not change with the value of $\eta$, we can derive that $\sqrt{N}\Upsilon_{2,y}|_{\eta_{y}^{*}}$ converges in distribution to a Gaussian distributed random variable with zero mean and the limiting variance is
\begin{align}
&4\sigma^{10}\left\{\theta_{0}^{T}\left[P^{-1}{\Sigma}^{-1}\frac{\partial P^{-1}}{\partial\eta_{k}} + \frac{\partial P^{-1}}{\partial\eta_{k}}{\Sigma}^{-1}P^{-1} \right]{\Sigma}^{-1}\right.\nonumber\\
&\left.\left.\left[P^{-1}{\Sigma}^{-1}\frac{\partial P^{-1}}{\partial\eta_{k}} + \frac{\partial P^{-1}}{\partial\eta_{k}}{\Sigma}^{-1}P^{-1} \right]\theta_{0}\right\}\right|_{\eta_{y}^{*}}
\end{align}
\end{enumerate}

Applying Slutsky's theorem, $\sqrt{N}\left.\frac{\partial \overline{\mathcal{F}_{\Sy}}}{\partial \eta}\right|_{\eta_{y}^{*}} $ converges in distribution to a Gaussian distributed random vector with zero mean and the $(k,l)$th element of the limiting covariance matrix is
\begin{align}\label{eq:limiting covariance of 1st derivative of Fsy}
&4\sigma^{10}\left\{\theta_{0}^{T}\left[P^{-1}{\Sigma}^{-1}\frac{\partial P^{-1}}{\partial\eta_{k}} + \frac{\partial P^{-1}}{\partial\eta_{k}}{\Sigma}^{-1}P^{-1} \right]{\Sigma}^{-1}\right.\nonumber\\
&\left.\left.\left[P^{-1}{\Sigma}^{-1}\frac{\partial P^{-1}}{\partial\eta_{l}} + \frac{\partial P^{-1}}{\partial\eta_{l}}{\Sigma}^{-1}P^{-1} \right]\theta_{0}\right\}\right|_{\eta^{*}_{y}}.
\end{align}

The second step is to show that the $(k,l)$th element of
\begin{align}
D_{y}(\eta^{*}_{y})\triangleq &\plim_{N\to\infty}N\E\left(\left.\frac{\partial \overline{\mathcal{F}_{\Sy}}}{\partial \eta}\right|_{\eta^{*}_{y}}\times \left.{\frac{\partial \overline{\mathcal{F}_{\Sy}}}{\partial \eta^{T}}}\right|_{\eta^{*}_{y}}\right)
\end{align}
equals \eqref{eq:limiting covariance of 1st derivative of Fsy}.

Using Lemma \ref{lemma:mean of quartic forms}, we have
\begin{align}
&D_{y}(\eta^{*}_{y})_{k,l}\nonumber\\
=&4\sigma^8\left\{\sigma^2\theta_{0}^{T}\left[P^{-1}{\Sigma}^{-1}\frac{\partial P^{-1}}{\partial\eta_{k}} + \frac{\partial P^{-1}}{\partial\eta_{k}}{\Sigma}^{-1}P^{-1} \right]{\Sigma}^{-1}\right.\nonumber\\
&\left[P^{-1}{\Sigma}^{-1}\frac{\partial P^{-1}}{\partial\eta_{l}} + \frac{\partial P^{-1}}{\partial\eta_{l}}{\Sigma}^{-1}P^{-1} \right]\theta_{0}\nonumber\\
&+\plim_{N\to\infty}\sqrt{N}\left[(\theta_{0})^{T}S^{-T}N(\Phi^{T}\Phi)^{-1}\frac{\partial S^{-1}}{\partial\eta_{k}}\theta_{0}\right.\nonumber\\
&-\left.\Tr\left(N(\Phi^{T}\Phi)^{-1}\frac{\partial S^{-1}}{\partial\eta_{k}}\right)\right]\nonumber\\
&\sqrt{N}\left[(\theta_{0})^{T}S^{-T}N(\Phi^{T}\Phi)^{-1}\frac{\partial S^{-1}}{\partial\eta_{l}}\theta_{0}\right.\nonumber\\
&-\left.\left.\left.\Tr\left(N(\Phi^{T}\Phi)^{-1}\frac{\partial S^{-1}}{\partial\eta_{l}}\right)\right]\right\}\right|_{\eta^{*}_{y}}.
\end{align}
Based on \eqref{eq:convergence in probability of sqrtN_1st_derivative_S_inv_P_inv} and \eqref{eq:limiting in probability of sqrtN_PP_Sigma_inv}, it can be derived that
\begin{align}
&\plim_{N\to\infty}\sqrt{N}\left[(\theta_{0})^{T}S^{-T}N(\Phi^{T}\Phi)^{-1}\frac{\partial S^{-1}}{\partial\eta_{k}}\theta_{0}\right.\nonumber\\
&-\Tr\left.\left.\left(N(\Phi^{T}\Phi)^{-1}\frac{\partial S^{-1}}{\partial\eta_{k}}\right)\right]\right|_{\eta_{y}^{*}}\nonumber\\
=&\plim_{N\to\infty}\sqrt{N}\left[(\theta_{0})^{T}S^{-T}N(\Phi^{T}\Phi)^{-1}\frac{\partial S^{-1}}{\partial\eta_{k}}\theta_{0}\right.\nonumber\\
&-\Tr\left(N(\Phi^{T}\Phi)^{-1}\frac{\partial S^{-1}}{\partial\eta_{k}}\right)-\left.\left.\frac{\partial W_{y}}{\partial\eta_{k}}\right]\right|_{\eta_{y}^{*}}\nonumber\\
=&\plim_{N\to\infty}\sqrt{N}\left\{\theta_{0}^{T}(S^{-T}-P^{-T})N(\Phi^{T}\Phi)^{-1}\frac{\partial S^{-1}}{\partial\eta_{k}}\theta_{0}\right.\nonumber\\
&+\theta_{0}^{T}P^{-T}[N(\Phi^{T}\Phi)^{-1}-\Sigma^{-1}]\frac{\partial S^{-1}}{\partial\eta_{k}}\theta_{0}\nonumber\\
&+\theta_{0}^{T}P^{-T}\Sigma^{-1}\left[\frac{\partial S^{-1}}{\partial\eta_{k}}- \frac{\partial P^{-1}}{\partial\eta_{k}}\right]\theta_{0}\nonumber\\
&-\Tr\left[(N(\Phi^{T}\Phi)^{-1}-\Sigma^{-1})\frac{\partial S^{-1}}{\partial\eta_{k}}\right]\nonumber\\
&-\Tr\left.\left.\left[\Sigma^{-1}\left(\frac{\partial S^{-1}}{\partial\eta_{k}}-\frac{\partial P^{-1}}{\partial\eta_{k}}\right)\right]
 \right\}\right|_{\eta_{y}^{*}}\nonumber\\
=&0.
\end{align}
Thus
\begin{align}
D_{y}(\eta^{*}_{y})_{k,l}
=&4\sigma^{10}\left\{\theta_{0}^{T}\left[P^{-1}{\Sigma}^{-1}\frac{\partial P^{-1}}{\partial\eta_{k}} + \frac{\partial P^{-1}}{\partial\eta_{k}}{\Sigma}^{-1}P^{-1} \right]{\Sigma}^{-1}\right.\nonumber\\
&\left.\left.\left[P^{-1}{\Sigma}^{-1}\frac{\partial P^{-1}}{\partial\eta_{l}} + \frac{\partial P^{-1}}{\partial\eta_{l}}{\Sigma}^{-1}P^{-1} \right]\theta_{0}\right\}\right|_{\eta^{*}_{y}}.
\end{align}
\end{itemize}

Finally, we apply Lemma \ref{lemma:asymptotic normality of a consistent root} with $M_{N}=N\overline{\mathcal{F}_{\Sy}}$ and $\eta^{*}=\eta^{*}_{y}$ and the proof of the asymptotic normality of $\hat{\eta}_{\Sy}-\eta_{y}^{*}$ is complete.

\subsection{Proof of Corollary \ref{corollary:ridge regression case for asymptotic normality}}

Inserting $P=\eta I_{n}$ into the first order optimality conditions of $W_{b}$ and $W_{y}$, we can derive that
\begin{align}
\eta_{b}^{*}=\frac{\theta_{0}^{T}\theta_{0}}{n},\
\eta_{y}^{*}=\frac{\theta_{0}^{T}\Sigma^{-1}\theta_{0}}{\Tr(\Sigma^{-1})}.
\end{align}
Inserting them into \eqref{eq:element of A eta_b_star}, \eqref{eq:element of B eta_b_star}, \eqref{eq:element of C eta_y_star} and \eqref{eq:element of D eta_y_star}, it gives that
\begin{align}
A_{b}(\eta_{b}^{*})=&\frac{n^3}{(\theta_{0}^{T}\theta_{0})^2},\\
B_{b}(\eta_{b}^{*})=&\frac{4\sigma^2n^4}{(\theta_{0}^{T}\theta_{0})^4}\theta_{0}^{T}\Sigma^{-1}\theta_{0},\\
C_{y}(\eta_{y}^{*})=&\frac{2\sigma^4\Tr^4(\Sigma^{-1})}{(\theta_{0}^{T}\Sigma^{-1}\theta_{0})^3},\\
D_{y}(\eta_{y}^{*})=&\frac{16\sigma^{10}\Tr^6(\Sigma^{-1})}{(\theta_{0}^{T}\Sigma^{-1}\theta_{0})^6}\theta_{0}^{T}\Sigma^{-3}\theta_{0},
\end{align}
which leads to
\begin{align}
\frac{A_{b}^{-1}(\eta_{b}^{*})B_{b}(\eta_{b}^{*})A_{b}^{-1}(\eta_{b}^{*})}{C_{y}^{-1}(\eta_{y}^{*})D_{y}(\eta_{y}^{*})C_{y}^{-1}(\eta_{y}^{*})}
=&\frac{\Tr^2(\Sigma^{-1})\theta_{0}^{T}\Sigma^{-1}\theta_{0}}{n^2\theta_{0}^{T}\Sigma^{-3}\theta_{0}}.
\end{align}
Apply the singular value decomposition (SVD) in $\Sigma$ as
\begin{align}
\Sigma=U_{s}S_{s}U_{s}^{T},
\end{align}
where $U_{s}\in\R^{n\times n}$ is orthogonal and $S_{s}\in\R^{n\times n}$ is diagonal with, eigenvalues of $\Sigma$, $\lambda_{1}(\Sigma)\geq \cdots \geq \lambda_{n}(\Sigma)$. Set $U_{s}^{T}\theta_{0}\triangleq \left[\begin{array}{ccc}\tilde{g}_{1} & \cdots & \tilde{g}_{n} \end{array}\right]^{T}$. As $\lambda_{n}(\Sigma)\to 0$ and the other eigenvalues are fixed, namely that $\cond(\Sigma)=\lambda_{1}(\Sigma)/\lambda_{n}(\Sigma)\to\infty$, we have
\begin{align}
&\frac{A_{b}^{-1}(\eta_{b}^{*})B_{b}(\eta_{b}^{*})A_{b}^{-1}(\eta_{b}^{*})}{C_{y}^{-1}(\eta_{y}^{*})D_{y}(\eta_{y}^{*})C_{y}^{-1}(\eta_{y}^{*})}\nonumber\\
=&\frac{1}{n^2}\frac{\frac{1}{\lambda_{n}}\tilde{g}_{n}^2+ \sum_{i=1}^{n-1}\frac{1}{\lambda_{i}}\tilde{g}_{i}^2}
{ \frac{1/\lambda_{n}^3}{(\sum_{j=1}^{n}1/\lambda_{j})^2}\tilde{g}_{n}^2 + \sum_{i=1}^{n-1}\frac{1/\lambda_{i}^3}{(\sum_{j=1}^{n}1/\lambda_{j})^2}\tilde{g}_{i}^2 }\nonumber\\
=&\frac{1}{n^2}\frac{\tilde{g}_{n}^2+ \sum_{i=1}^{n-1}\frac{\lambda_{n}}{\lambda_{i}}\tilde{g}_{i}^2}
{ \frac{1}{(1+\sum_{j=1}^{n-1}\lambda_{n}/\lambda_{j})^2}\tilde{g}_{n}^2 + \sum_{i=1}^{n-1}\frac{\lambda_{n}^3/\lambda_{i}^3}{(1+\sum_{j=1}^{n-1}\lambda_{n}/\lambda_{j})^2}\tilde{g}_{i}^2 }\nonumber\\
\to&\frac{1}{n^2}.
\end{align}

\section*{Appendix B}

\setcounter{subsection}{0}
\subsection{Matrix Norm Inequalities }

\begin{lemma}\label{lemma:matrix norm inequality}(\cite{PP2012} Chapter 10.3 Page $61-62$)
	{\it For the symmetric $B\in\R^{m\times m}$ with its rank $r$ and $C\in\R^{m\times m}$, we have
		\begin{align}
		\|BC\|_{F}\leq&\|B\|_{F}\|C\|_{F}\\
		\|B+C\|_{F}\leq&\|B\|_{F}+\|C\|_{F}\\
		|\Tr(B)|\leq&\|B\|_{*}\leq\sqrt{r}\|B\|_{F},
		\end{align}
where $\|\cdot\|_{*}$ denotes the nuclear norm. $|\Tr(B)|\leq\|B\|_{*}$ can be proved by $|\sum_{i=1}^{m}\lambda_{i}(B)|\leq \sum_{i=1}^{m}|\lambda_{i}(B)|$.
	}
\end{lemma}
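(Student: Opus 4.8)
The plan is to establish the three inequalities one at a time, each reducing to Cauchy--Schwarz or to the fact that the Frobenius norm is induced by an inner product. First I would prove submultiplicativity $\|BC\|_{F}\leq\|B\|_{F}\|C\|_{F}$ by writing the entries $(BC)_{ij}=\sum_{k}B_{ik}C_{kj}$ and bounding each via Cauchy--Schwarz, $|(BC)_{ij}|^{2}\leq(\sum_{k}B_{ik}^{2})(\sum_{k}C_{kj}^{2})$; summing over $i,j$ and factoring the resulting double sum gives $\|BC\|_{F}^{2}\leq\|B\|_{F}^{2}\|C\|_{F}^{2}$. The triangle inequality $\|B+C\|_{F}\leq\|B\|_{F}+\|C\|_{F}$ is then immediate upon recognizing that $\|\cdot\|_{F}$ is the norm induced by the inner product $\langle A,B\rangle=\Tr(A^{T}B)$, equivalently the Euclidean $\ell_{2}$ norm of the vectorized matrix, and every inner-product norm obeys the triangle inequality.

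For the final chain $|\Tr(B)|\leq\|B\|_{*}\leq\sqrt{r}\|B\|_{F}$ I would invoke the singular value decomposition together with the hint supplied in the statement. Since $B$ is symmetric, its eigenvalues $\lambda_{i}(B)$ are real and its singular values are exactly $\sigma_{i}=|\lambda_{i}(B)|$, so that $\|B\|_{*}=\sum_{i=1}^{m}|\lambda_{i}(B)|$. The left inequality then follows from the scalar triangle inequality $|\sum_{i=1}^{m}\lambda_{i}(B)|\leq\sum_{i=1}^{m}|\lambda_{i}(B)|$, which is precisely the route indicated in the lemma. For the right inequality, only $r=\rank(B)$ of the singular values are nonzero, so applying Cauchy--Schwarz to the vector $(\sigma_{1},\dots,\sigma_{r})$ against the all-ones vector yields $\|B\|_{*}=\sum_{i=1}^{r}\sigma_{i}\leq\sqrt{r}\,(\sum_{i=1}^{r}\sigma_{i}^{2})^{1/2}=\sqrt{r}\|B\|_{F}$, where the last equality uses $\|B\|_{F}^{2}=\sum_{i}\sigma_{i}^{2}$.

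None of these steps presents a genuine obstacle, as each collapses to Cauchy--Schwarz or to the definition of an inner-product norm. The only point that demands care is the identification of the singular values of the symmetric matrix $B$ with the absolute values of its eigenvalues; this is what places the trace, the nuclear norm, and the Frobenius norm on a common spectral footing and makes the comparison in the final chain transparent. Since this is a standard fact cited from \cite{PP2012}, the argument above is offered merely as the self-contained verification underlying that reference.
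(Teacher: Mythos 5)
Your proposal is correct and takes essentially the same route as the paper, which offers no argument beyond the citation to \cite{PP2012} except the hint that $|\Tr(B)|\leq\|B\|_{*}$ follows from $|\sum_{i=1}^{m}\lambda_{i}(B)|\leq\sum_{i=1}^{m}|\lambda_{i}(B)|$ --- precisely your derivation via the identification of the singular values of symmetric $B$ with $|\lambda_{i}(B)|$. Your remaining steps (entrywise Cauchy--Schwarz for submultiplicativity, the inner-product triangle inequality, and Cauchy--Schwarz against the all-ones vector on the $r$ nonzero singular values for $\|B\|_{*}\leq\sqrt{r}\|B\|_{F}$) are the standard verifications of the cited facts and contain no gaps.
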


\subsection{Strong Law of Large Numbers}

\begin{lemma}\label{lemma:SLLN}{ Kolmogorov's Strong Law of Large Numbers}(\cite{Greene2003} Page $1166$ Appendix D.7)
{\it	If $x_{i},i=1,\cdots,N$ is a sequence of independent random variables such that $\E(x_{i})=\mu_{i}<\infty$ and $\Var(x_{i})=\sigma^{2}_{i}<\infty$ such that $\sum_{i=1}^{\infty}\sigma^{2}_{i}/i^{2}<\infty$ as $N\to\infty$ then
\begin{align}
 \frac{1}{N}\sum_{i=1}^{N}x_{i}-\frac{1}{N}\sum_{i=1}^{N}\mu_{i} \overset{a.s.}\to 0.
\end{align}}
\end{lemma}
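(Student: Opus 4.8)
The plan is to reduce the statement to the almost-sure convergence of a single random series and then invoke Kronecker's lemma. First I would center the summands by setting $Y_{i}=x_{i}-\mu_{i}$, so that $\E(Y_{i})=0$ and $\Var(Y_{i})=\sigma_{i}^{2}$ for each $i$, and the claim becomes $\frac{1}{N}\sum_{i=1}^{N}Y_{i}\overset{a.s.}\to 0$. The crucial observation is that $\frac{1}{N}\sum_{i=1}^{N}Y_{i}$ is the normalized partial sum $\frac{1}{b_{N}}\sum_{i=1}^{N}Y_{i}$ with $b_{N}=N\uparrow\infty$, so by Kronecker's lemma it suffices to prove that the weighted series $\sum_{i=1}^{\infty}Y_{i}/i$ converges almost surely.

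The heart of the argument is therefore to establish almost-sure convergence of $\sum_{i=1}^{\infty}Y_{i}/i$. Setting $Z_{i}=Y_{i}/i$, the $Z_{i}$ are independent with $\E(Z_{i})=0$ and $\Var(Z_{i})=\sigma_{i}^{2}/i^{2}$, and the standing hypothesis gives $\sum_{i=1}^{\infty}\Var(Z_{i})=\sum_{i=1}^{\infty}\sigma_{i}^{2}/i^{2}<\infty$. I would then apply Kolmogorov's one-series theorem: for independent, centered summands whose variances are summable, the partial sums converge almost surely. To prove this in a self-contained way, I would use Kolmogorov's maximal inequality, which states that for independent centered $Z_{1},\dots,Z_{m}$ one has $\Prob(\max_{1\leq k\leq m}|\sum_{i=1}^{k}Z_{i}|\geq\epsilon)\leq\epsilon^{-2}\sum_{i=1}^{m}\Var(Z_{i})$. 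Applying this to the tail blocks $Z_{m+1},\dots,Z_{m'}$ and letting $m'\to\infty$ shows that $\Prob(\sup_{k>m}|\sum_{i=m+1}^{k}Z_{i}|\geq\epsilon)\leq\epsilon^{-2}\sum_{i=m+1}^{\infty}\Var(Z_{i})\to 0$ as $m\to\infty$, so the partial sums $\{\sum_{i=1}^{k}Z_{i}\}$ are almost surely Cauchy and hence converge almost surely.

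Finally I would complete the proof with Kronecker's lemma, a purely deterministic statement: if $b_{N}\uparrow\infty$ and $\sum_{i=1}^{\infty}z_{i}/b_{i}$ converges, then $\frac{1}{b_{N}}\sum_{i=1}^{N}z_{i}\to 0$. Applying this with $b_{i}=i$ and $z_{i}=Y_{i}$ on the almost-sure event where $\sum_{i=1}^{\infty}Y_{i}/i$ converges yields $\frac{1}{N}\sum_{i=1}^{N}Y_{i}\overset{a.s.}\to 0$, which is exactly the centered claim and hence the lemma after uncentering. The main obstacle is the almost-sure convergence of the random series, i.e.\ the passage from summable variances to pathwise convergence; the maximal inequality is precisely what upgrades the $L^{2}$ (variance) control into control of the supremum of the partial sums, while everything else—the centering step and Kronecker's lemma—is routine bookkeeping.
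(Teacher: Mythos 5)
Your proof is correct and complete: centering the variables, establishing almost-sure convergence of $\sum_{i=1}^{\infty}(x_{i}-\mu_{i})/i$ via Kolmogorov's maximal inequality (the one-series theorem), and finishing with Kronecker's lemma is precisely the classical proof of this strong law, and each step is carried out soundly. Note, however, that the paper itself gives no proof of this lemma at all---it is quoted as a black-box auxiliary result from Greene (2003) and used only to prove Corollary 1---so there is no in-paper argument to compare against; what you have written is the standard textbook derivation, supplied in full where the paper merely cites.
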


\subsection{Almost Sure Convergence of Sample Covariance Matrix}

\begin{corollary}\label{corollary:convergence of sample covariance matrix}
{\it Let $X_{1},\ X_{2},\ \cdots\ X_{N}$ be independent, identically distributed random vectors with mean $\mu_{x}$ and covariance matrix $\Sigma_{x}$, where $X_{i}\in\R^{n}$ for each $i=1,\cdots,N$, $\mu_{x}\in\R^{n}$ with $\|\mu_{x}\|_{2}<\infty$, and $\Sigma_{x}\in\R^{n\times n}$ with $\|\Sigma_{x}\|_{F}<\infty$. Then it can be seen that as $N\to\infty$,
\begin{align}\label{eq:as of mean vector}
\frac{\sum_{i=1}^{N}X_{i}}{N} \overset{a.s.}\to& \mu_{x}\\\label{eq:as of covariance matrix}
\frac{\sum_{i=1}^{N}(X_{i}-\bar{X})(X_{i}-\bar{X})^{T}}{N} \overset{a.s.}\to& \Sigma_{x},
\end{align}
\begin{align}
\frac{\sum_{i=1}^{N}X_{i}}{N} \overset{a.s.}\to \mu_{x},\
\frac{\sum_{i=1}^{N}(X_{i}-\bar{X})(X_{i}-\bar{X})^{T}}{N} \overset{a.s.}\to \Sigma_{x},
\end{align}
where $\bar{X}=\sum_{i=1}^{N}X_{i}/N$.}
\end{corollary}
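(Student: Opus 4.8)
The plan is to establish the two almost-sure limits separately, in both cases reducing the vector/matrix statement to finitely many scalar statements and invoking Kolmogorov's Strong Law of Large Numbers (Lemma \ref{lemma:SLLN}) coordinate-wise. Since a finite intersection of probability-one events again has probability one, assembling the $n$ (resp.\ $n^2$) scalar limits into the vector (resp.\ matrix) convergence is automatic.

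First, for \eqref{eq:as of mean vector}, I would write the $j$-th coordinate of the sample mean as $\frac{1}{N}\sum_{i=1}^{N}X_{i,j}$, where $X_{i,j}$ denotes the $j$-th entry of $X_{i}$. Because the $X_{i}$ are i.i.d., the scalars $\{X_{i,j}\}_{i=1}^{N}$ are i.i.d.\ with common mean $(\mu_{x})_{j}$ and common variance $(\Sigma_{x})_{j,j}$, which is finite since $\|\Sigma_{x}\|_{F}<\infty$. The Kolmogorov summability condition then holds trivially: taking $\sigma_{i}^{2}\equiv(\Sigma_{x})_{j,j}$ gives $\sum_{i=1}^{\infty}\sigma_{i}^{2}/i^{2}=(\Sigma_{x})_{j,j}\sum_{i=1}^{\infty}1/i^{2}<\infty$. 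Lemma \ref{lemma:SLLN} yields $\frac{1}{N}\sum_{i=1}^{N}X_{i,j}\overset{a.s.}\to(\mu_{x})_{j}$ for each $j=1,\cdots,n$, and intersecting these $n$ events gives $\bar{X}\overset{a.s.}\to\mu_{x}$.

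Next, for \eqref{eq:as of covariance matrix}, I would first use the standard identity
\[
\frac{1}{N}\sum_{i=1}^{N}(X_{i}-\bar{X})(X_{i}-\bar{X})^{T}=\frac{1}{N}\sum_{i=1}^{N}X_{i}X_{i}^{T}-\bar{X}\bar{X}^{T},
\]
so it suffices to treat the raw-second-moment average and the outer product of the sample mean separately. For the first term, the $(j,k)$ entry is $\frac{1}{N}\sum_{i=1}^{N}X_{i,j}X_{i,k}$, whose summands are i.i.d.\ with common mean $\E(X_{1,j}X_{1,k})=(\Sigma_{x})_{j,k}+(\mu_{x})_{j}(\mu_{x})_{k}$; applying the strong law entry-wise gives $\frac{1}{N}\sum_{i=1}^{N}X_{i}X_{i}^{T}\overset{a.s.}\to\Sigma_{x}+\mu_{x}\mu_{x}^{T}$. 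For the second term, the convergence $\bar{X}\overset{a.s.}\to\mu_{x}$ already proved, together with the continuity of the map $v\mapsto vv^{T}$, gives $\bar{X}\bar{X}^{T}\overset{a.s.}\to\mu_{x}\mu_{x}^{T}$. Subtracting yields $(\Sigma_{x}+\mu_{x}\mu_{x}^{T})-\mu_{x}\mu_{x}^{T}=\Sigma_{x}$, which is \eqref{eq:as of covariance matrix}.

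The main obstacle will be the moment requirement hidden in the second part. To apply Lemma \ref{lemma:SLLN} directly to the products $X_{i,j}X_{i,k}$ one needs each of them to have finite variance, and hence $X$ to possess finite fourth moments, which is stronger than the stated hypothesis $\|\Sigma_{x}\|_{F}<\infty$. The clean way around this is to exploit that the products are i.i.d.\ rather than merely independent: for i.i.d.\ summands the strong law requires only a finite first absolute moment, and $\E|X_{1,j}X_{1,k}|\leq(\E X_{1,j}^{2})^{1/2}(\E X_{1,k}^{2})^{1/2}<\infty$ by the Cauchy--Schwarz inequality, which is guaranteed by the assumed finiteness of the second moments. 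Thus the entry-wise convergence of $\frac{1}{N}\sum_{i=1}^{N}X_{i}X_{i}^{T}$ holds under the stated hypotheses; equivalently, if finite fourth moments are assumed, Lemma \ref{lemma:SLLN} applies verbatim with $\sigma_{i}^{2}\equiv\Var(X_{1,j}X_{1,k})$ and the summability condition once more reduces to a constant times $\sum_{i=1}^{\infty}1/i^{2}<\infty$.
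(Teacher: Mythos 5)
Your proposal is correct, and its skeleton matches the paper's own proof exactly: both arguments reduce to coordinates, prove $\bar{X}_{j}\overset{a.s.}\to(\mu_{x})_{j}$ via the strong law, and then use the entry-wise identity $\frac{1}{N}\sum_{i}(X_{i,j}-\bar{X}_{j})(X_{i,k}-\bar{X}_{k})=\frac{1}{N}\sum_{i}X_{i,j}X_{i,k}-\bigl(\frac{1}{N}\sum_{i}X_{i,j}\bigr)\bigl(\frac{1}{N}\sum_{i}X_{i,k}\bigr)$, which is the scalar form of your matrix identity $\frac{1}{N}\sum_{i}X_{i}X_{i}^{T}-\bar{X}\bar{X}^{T}$. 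Where you genuinely diverge is the key lemma for the second-moment step, and here your version is actually more careful than the paper's: the paper applies its Lemma \ref{lemma:SLLN} (Kolmogorov's variance-summability SLLN for independent, not necessarily identically distributed, variables) to the summands $X_{i,j}^{2}$ and $X_{i,j}X_{i,k}$, which tacitly requires $\Var(X_{1,j}X_{1,k})<\infty$, i.e.\ finite \emph{fourth} moments of $X_{1}$ --- a condition not implied by the stated hypothesis $\|\Sigma_{x}\|_{F}<\infty$. You spot this gap and close it by switching to the i.i.d.\ first-moment version of Kolmogorov's strong law, with $\E|X_{1,j}X_{1,k}|<\infty$ supplied by Cauchy--Schwarz from the finite second moments; this makes the corollary valid exactly under its stated hypotheses. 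The one caveat is that the first-moment i.i.d.\ SLLN is not among the lemmas the paper provides, so your argument invokes a standard external result; your fallback observation --- that under an added finite-fourth-moment assumption Lemma \ref{lemma:SLLN} applies verbatim with constant $\sigma_{i}^{2}$ and $\sum_{i}1/i^{2}<\infty$ --- is precisely the reading under which the paper's own proof is rigorous.
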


\begin{proof}
Define that the $i$th element of $\mu_{x}$ is $\mu_{i}$, the $(i,i)$th element of $\Sigma_{x}$ is $\sigma^2_{i}$ and the $(i,j)$th ($i\not=j$) element of $\Sigma_{x}$ is $c_{i,j}$. Let $X_{i,j}$ represent the $j$th element of $X_{i}$.

According to Lemma \ref{lemma:SLLN}, since $\{X_{i,j}\}_{i=1}^{N}$ are i.i.d. with $\mu_{j}<\infty$ and $\sigma^{2}_{j}<\infty$, then it is clear that as $N\to\infty$, $\bar{X}_{j}\triangleq\sum_{i=1}^{N}X_{i,j}/N\overset{a.s.}\to\mu_{j}$. Meanwhile, as $N\to\infty$,
\begin{align}
\frac{1}{N}\sum_{i=1}^{N}(X_{i,j}-\bar{X}_{j})^{2}
%=&\frac{1}{N}\sum_{i=1}^{N}X_{i,j}^2-\frac{2}{N}\bar{X}_{j}\sum_{i=1}^{N}X_{i,j}+\bar{X}_{j}^2\nonumber\\
=&\frac{1}{N}\sum_{i=1}^{N}X_{i,j}^2-\left(\frac{1}{N}\sum_{j=1}^{N}X_{i,j}\right)^2\nonumber\\
\overset{a.s.}\to & (\sigma_{j}^2+\mu_{j}^2)-\mu_{j}^2=\sigma_{j}^2.
\end{align}
In addition, for each pair $(j,k)$ ($j\not=k$), we can see that
\begin{align}
&\frac{1}{N}\sum_{i=1}^{N}(X_{i,j}-\bar{X}_{j})(X_{i,k}-\bar{X}_{k})\nonumber\\
%=&\frac{1}{N}\sum_{i=1}^{N}X_{i,j}X_{i,k}-\frac{1}{N}\bar{X}_{j}\sum_{i=1}^{N}X_{i,k}
% -\frac{1}{N}\bar{X}_{k}\sum_{i=1}^{N}X_{i,j}+\bar{X}_{k}\bar{X}_{j}\nonumber\\
=&\frac{1}{N}\sum_{i=1}^{N}X_{i,j}X_{i,k}-\left(\frac{1}{N}\sum_{i=1}^{N}X_{i,j}\right)
 \left(\frac{1}{N}\sum_{i=1}^{N}X_{i,k}\right)\nonumber\\
 \overset{a.s.}\to& (c_{j,k}+\mu_{j}\mu_{k})-\mu_{j}\mu_{k}=c_{j,k}.
\end{align}
Applying the results to respective elements of $X_{i},i=1,\cdots,N$, $\mu_{x}$ and $\Sigma_{x}$, then the results \eqref{eq:as of mean vector} and \eqref{eq:as of covariance matrix} can be obtained.
\end{proof}

\subsection{Upper Bound of the Frobenius Norm of A Random Matrix}

\begin{corollary}\label{corollary:upper bounds of building blocks}
	{\it For a positive definite random matrix $A_{N}\in\R^{n\times n}$, if $A_{N}=O_{p}(a_{N})$, the upper bound of $A_{N}^{-1}=O_{p}(1/a_{N})$ can be written as
	\begin{align}\label{eq:upper bound of inv Phiphi}
\|A_{N}^{-1}\|_{F}\leq \frac{1}{a_{N}}\frac{\sqrt{n}a_{N}}{\lambda_{1}(A_{N})}\cond(A_{N}).
	\end{align}}
\end{corollary}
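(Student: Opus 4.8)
The plan is to recognize that the two explicit powers of $a_{N}$ in the stated bound cancel, reducing the claim to a purely deterministic matrix inequality, and then to establish that inequality via the spectral decomposition of $A_{N}$. The factor $1/a_{N}$ out front and the factor $a_{N}$ inside are written only to display the scaling, as explained in Remark \ref{rmk:note for Op()}; the actual content is an upper bound on $\|A_{N}^{-1}\|_{F}$.

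First I would simplify the right-hand side. Using $\cond(A_{N})=\lambda_{1}(A_{N})/\lambda_{n}(A_{N})$, observe that
\begin{align}
\frac{1}{a_{N}}\frac{\sqrt{n}a_{N}}{\lambda_{1}(A_{N})}\cond(A_{N})
=\frac{\sqrt{n}}{\lambda_{1}(A_{N})}\cond(A_{N})
=\frac{\sqrt{n}}{\lambda_{n}(A_{N})},
\end{align}
so the inequality to be proved is equivalent to $\|A_{N}^{-1}\|_{F}\leq \sqrt{n}/\lambda_{n}(A_{N})$. Since $A_{N}$ is symmetric positive definite, I would diagonalize it as $A_{N}=U\Lambda U^{T}$ with $U$ orthogonal and $\Lambda=\diag(\lambda_{1}(A_{N}),\dots,\lambda_{n}(A_{N}))$ having strictly positive entries. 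Then $A_{N}^{-1}=U\Lambda^{-1}U^{T}$ has eigenvalues $1/\lambda_{i}(A_{N})$, and because the Frobenius norm is unitarily invariant,
\begin{align}
\|A_{N}^{-1}\|_{F}=\left(\sum_{i=1}^{n}\frac{1}{\lambda_{i}^{2}(A_{N})}\right)^{1/2}.
\end{align}
Bounding each summand by the largest one, which corresponds to the smallest eigenvalue $\lambda_{n}(A_{N})$, gives
\begin{align}
\|A_{N}^{-1}\|_{F}\leq\left(\frac{n}{\lambda_{n}^{2}(A_{N})}\right)^{1/2}=\frac{\sqrt{n}}{\lambda_{n}(A_{N})},
\end{align}
which is exactly the simplified bound, completing the deterministic part.

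Finally, to justify the $O_{p}(1/a_{N})$ reading, I would note that the factored form exhibits $1/a_{N}$ multiplied by $\frac{\sqrt{n}a_{N}}{\lambda_{1}(A_{N})}\cond(A_{N})$; under $A_{N}=O_{p}(a_{N})$ together with the convergence structure used in the applications (for instance $(\Phi^{T}\Phi)/N\overset{a.s.}\to\Sigma\succ0$ with $a_{N}=N$), both $a_{N}/\lambda_{1}(A_{N})$ and $\cond(A_{N})$ are $O_{p}(1)$, so the second factor is $O_{p}(1)$ and hence $\|A_{N}^{-1}\|_{F}=O_{p}(1/a_{N})$. There is no real obstacle here: the only point worth flagging is the cancellation of the $a_{N}$ powers, after which the result is the standard relation between the Frobenius norm of a matrix inverse and the smallest eigenvalue of the matrix.
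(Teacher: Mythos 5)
Your proposal is correct and follows essentially the same route as the paper's own proof: the paper likewise writes $\|A_{N}^{-1}\|_{F}=\sqrt{\sum_{i=1}^{n}1/\lambda_{i}^{2}(A_{N})}$ and bounds it by $\sqrt{n}\,\cond(A_{N})/\lambda_{1}(A_{N})$, which is exactly your $\sqrt{n}/\lambda_{n}(A_{N})$ bound after cancelling the $a_{N}$ factors. You merely spell out the spectral-decomposition and unitary-invariance steps that the paper leaves implicit, plus the $O_{p}$ interpretation, so there is no substantive difference.
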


\begin{proof}
According to the definition of Frobenius norm, we can know that
\begin{align}	
\|A_{N}^{-1}\|_{F}=&\sqrt{\sum_{i=1}^{n}\frac{1}{\lambda_{i}^2(A_{N})}}	%\leq&\frac{1}{a_{N}}\frac{a_{N}}{\lambda_{1}(A_{N})}\sqrt{1+(n-1){\cond}^2(A_{N})}\nonumber\\
\leq\frac{1}{a_{N}}\frac{\sqrt{n}a_{N}}{\lambda_{1}(A_{N})}\cond(A_{N}).
\end{align}
\end{proof}

\subsection{Expectation and Covariance of Gaussian Quadratic Forms}

\begin{lemma}\label{lemma:mean of quartic forms}(\cite{RS2008} Chapter 5.2 Page $107-110$, \cite{PP2012} Chapter $8.2$ Page $43$)
{\it    Assume that $A\in\R^{n\times n}$ and $B\in\R^{n\times n}$. If $a\in\R^{n}$ follows the normal distribution with mean $\mu_{a}\in\R^{n}$ and the covariance matrix $\Sigma_{a}\in\R^{n\times n}$, i.e. $a\sim\mathcal{N}(\mu_{a},\Sigma_{a})$, then
    \begin{align}
    \E(a^{T}Aa)=&\Tr(A\Sigma_{a})+\mu_{a}^{T}A\mu_{a}\\
    \E(a^{T}Aaa^{T}Ba)=&\Tr(A\Sigma_{a}(B+B^{T})\Sigma_{a})\nonumber\\
                       &+\mu_{a}^{T}(A+A^{T})\Sigma_{a}(B+B^{T})\mu_{a}\nonumber\\
                       &+[\Tr(A\Sigma_{a})+\mu_{a}^{T}A\mu_{a}][\Tr(B\Sigma_{a})+\mu_{a}^{T}B\mu_{a}].
    \end{align}}
\end{lemma}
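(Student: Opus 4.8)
The plan is to reduce everything to moments of a \emph{centred} Gaussian by the substitution $a=\mu_a+z$, where $z\sim\mathcal N(\0,\Sigma_a)$, and then to invoke the first, second and fourth moment identities of $z$. First I would record the elementary facts $\E(z)=\0$, $\E(zz^T)=\Sigma_a$, that all odd-order moments of $z$ vanish, and Isserlis' (Wick's) theorem $\E(z_iz_jz_kz_l)=(\Sigma_a)_{ij}(\Sigma_a)_{kl}+(\Sigma_a)_{ik}(\Sigma_a)_{jl}+(\Sigma_a)_{il}(\Sigma_a)_{jk}$.

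For the first identity I would expand $a^TAa=\mu_a^TA\mu_a+\mu_a^T(A+A^T)z+z^TAz$; the linear term has zero mean, and $\E(z^TAz)=\E\,\Tr(Azz^T)=\Tr(A\Sigma_a)$ by linearity and the cyclic property of the trace, which gives $\E(a^TAa)=\Tr(A\Sigma_a)+\mu_a^TA\mu_a$. For the quartic form I would write each factor as a constant plus a linear plus a quadratic part in $z$, namely $a^TAa=c_A+\ell_A+Q_A$ with $c_A=\mu_a^TA\mu_a$, $\ell_A=\mu_a^T(A+A^T)z$, $Q_A=z^TAz$ (and analogously for $B$), multiply the two expansions, and take expectations of the nine products one by one. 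Every product of odd total degree in $z$ (the four terms $c_A\ell_B$, $\ell_Ac_B$, $\ell_AQ_B$, $Q_A\ell_B$) vanishes; the constant–quadratic products give $c_A\Tr(B\Sigma_a)+c_B\Tr(A\Sigma_a)$ via the first identity; the linear–linear product gives $\E(\ell_A\ell_B)=\mu_a^T(A+A^T)\Sigma_a(B+B^T)\mu_a$ straight from $\E(zz^T)=\Sigma_a$; and the constant–constant product contributes $c_Ac_B$.

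The one genuinely new computation, and the main obstacle, is the quadratic–quadratic term $\E(z^TAz\,z^TBz)$. Here I would apply Isserlis' theorem index by index to obtain the three sums $\sum A_{ij}B_{kl}(\Sigma_a)_{ij}(\Sigma_a)_{kl}$, $\sum A_{ij}B_{kl}(\Sigma_a)_{ik}(\Sigma_a)_{jl}$ and $\sum A_{ij}B_{kl}(\Sigma_a)_{il}(\Sigma_a)_{jk}$, and re-express each as a trace. Using that $\Sigma_a$ is symmetric, the first sum equals $\Tr(A\Sigma_a)\Tr(B\Sigma_a)$, while the other two equal $\Tr(A^T\Sigma_a B\Sigma_a)$ and $\Tr(A\Sigma_a B\Sigma_a)$. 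The delicate part is precisely this matching of quadruple index sums to cyclic traces when $A$ and $B$ are not symmetric; the transpose-plus-cyclic invariance of the trace, together with $\Sigma_a=\Sigma_a^T$, yields $\Tr(A^T\Sigma_a B\Sigma_a)=\Tr(A\Sigma_a B^T\Sigma_a)$, so the last two traces combine into $\Tr(A\Sigma_a(B+B^T)\Sigma_a)$.

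Finally I would reassemble the surviving pieces and regroup, noting that $c_Ac_B+c_A\Tr(B\Sigma_a)+c_B\Tr(A\Sigma_a)+\Tr(A\Sigma_a)\Tr(B\Sigma_a)=[\Tr(A\Sigma_a)+\mu_a^TA\mu_a][\Tr(B\Sigma_a)+\mu_a^TB\mu_a]$, so that the total becomes $\Tr(A\Sigma_a(B+B^T)\Sigma_a)+\mu_a^T(A+A^T)\Sigma_a(B+B^T)\mu_a+[\Tr(A\Sigma_a)+\mu_a^TA\mu_a][\Tr(B\Sigma_a)+\mu_a^TB\mu_a]$, which is exactly the claimed right-hand side. (Alternatively, one could cite the standard references for Gaussian quadratic-form moments and omit the bookkeeping, but the centring-plus-Isserlis route above is self-contained.)
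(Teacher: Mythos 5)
Your proposal is correct, and every step checks out: the centring $a=\mu_a+z$ with $z\sim\mathcal{N}(\0,\Sigma_a)$, the vanishing of the four odd-degree cross terms, the linear--linear term $\E(\ell_A\ell_B)=\mu_a^T(A+A^T)\Sigma_a(B+B^T)\mu_a$, and the Isserlis computation
\begin{align}
\E(z^TAz\,z^TBz)=\Tr(A\Sigma_a)\Tr(B\Sigma_a)+\Tr(A^T\Sigma_aB\Sigma_a)+\Tr(A\Sigma_aB\Sigma_a),
\end{align}
whose last two traces indeed merge into $\Tr(A\Sigma_a(B+B^T)\Sigma_a)$ using $\Tr(A^T\Sigma_aB\Sigma_a)=\Tr(A\Sigma_aB^T\Sigma_a)$ and $\Sigma_a=\Sigma_a^T$; the final regrouping into $[\Tr(A\Sigma_a)+\mu_a^TA\mu_a][\Tr(B\Sigma_a)+\mu_a^TB\mu_a]$ is also right. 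Be aware, though, that there is no in-paper proof to compare against: the paper imports this lemma verbatim from \cite{RS2008} and \cite{PP2012} and uses it as a black box in the proofs of Theorem \ref{thm:asymptotic normality of eta_eb difference} and Theorem \ref{thm:asymptotic normality of eta_sy difference}. Relative to that citation, your derivation is the more informative object: it is self-contained, it handles non-symmetric $A$ and $B$ directly rather than first reducing to the symmetric case via $x^TAx=x^T\tfrac{1}{2}(A+A^T)x$ or deriving the moments from the moment generating function of quadratic forms (the routes typically taken in the cited references), and it isolates exactly the step where transpose errors usually creep in, namely matching the three Wick pairings of $\sum_{i,j,k,l}A_{ij}B_{kl}\E(z_iz_jz_kz_l)$ to cyclic traces. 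The price is a page of bookkeeping where the paper spends one line of citation; for the paper's purposes the citation suffices, but your version stands on its own.
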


\subsection{Bounded in Probability and Convergence in Probability}

\begin{lemma}\label{lemma:Op and convergence in probability} (\cite{Janson2009}, Page 5, Lemma 3)
{\it If $X_{N}=O_{p}(a_{N})$ with $a_{N}$ to be positive number sequence, then for any positive number sequence $w_{N}$ which satisfies that as $N\to\infty$, $w_{N}\to\infty$, we have $X_{N}/w_{N}a_{N}\overset{p}\to 0$. Here $\overset{p}\to$ denotes the convergence in probability.}
\end{lemma}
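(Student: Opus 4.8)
The plan is to prove the statement directly from the two definitions supplied in the excerpt, since no probabilistic machinery beyond them is required. Recall that $X_{N}=O_{p}(a_{N})$ means the rescaled sequence $X_{N}/a_{N}$ is bounded in probability: for every $\eta>0$ there exists $L>0$, \emph{independent of $N$}, with $\Prob(|X_{N}/a_{N}|>L)<\eta$ for all $N$. The goal is to show $X_{N}/(w_{N}a_{N})\overset{p}\to 0$, i.e. that for every $\epsilon>0$ we have $\Prob(|X_{N}/(w_{N}a_{N})|>\epsilon)\to 0$ as $N\to\infty$.

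First I would fix $\epsilon>0$ and rewrite the event of interest by pulling the deterministic scaling through the inequality:
\begin{align}
\left\{\left|\frac{X_{N}}{w_{N}a_{N}}\right|>\epsilon\right\}
=\left\{\left|\frac{X_{N}}{a_{N}}\right|>\epsilon w_{N}\right\}.
\end{align}
This converts a statement about the vanishing prefactor $1/w_{N}$ into a statement about a \emph{diverging} threshold $\epsilon w_{N}$ applied to the tight family $X_{N}/a_{N}$, which is the form controlled by the $O_{p}$ hypothesis.

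Next, to conclude the limit is zero it suffices to show that for an arbitrary $\eta>0$ the probability is eventually below $\eta$. Given such $\eta$, I invoke $X_{N}=O_{p}(a_{N})$ to produce a single $L>0$, uniform in $N$, with $\Prob(|X_{N}/a_{N}|>L)<\eta$ for all $N$. Because $w_{N}\to\infty$, there is an index $N_{0}$ such that $\epsilon w_{N}>L$ for all $N>N_{0}$. For those $N$ the event $\{|X_{N}/a_{N}|>\epsilon w_{N}\}$ is contained in $\{|X_{N}/a_{N}|>L\}$, so monotonicity of probability yields $\Prob(|X_{N}/(w_{N}a_{N})|>\epsilon)\le\Prob(|X_{N}/a_{N}|>L)<\eta$. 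Since $\eta>0$ was arbitrary, the $\limsup$ of the left-hand side is bounded by every positive $\eta$ and hence equals zero, which is exactly convergence in probability to $0$.

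The argument carries no analytic obstacle; the one point demanding care is the order of quantifiers. The constant $L$ must be extracted from the $O_{p}$ hypothesis \emph{before} choosing $N_{0}$, and it is essential that this $L$ be taken uniformly in $N$ — this uniformity is precisely the content of boundedness in probability and is what allows the fixed threshold $L$ to eventually be dominated by the growing threshold $\epsilon w_{N}$. A secondary detail is that the event inclusion needs $\epsilon w_{N}\ge L$; the choice $\epsilon w_{N}>L$ for $N>N_{0}$ supplies this comfortably.
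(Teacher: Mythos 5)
Your proposal is correct and follows essentially the same route as the paper's proof: both extract a single $L$ uniform in $N$ from the $O_{p}$ hypothesis, use $w_{N}\to\infty$ to dominate $L$ by the growing threshold $\epsilon w_{N}$ eventually, and conclude via event inclusion and the arbitrariness of the tolerance (your $\epsilon,\eta$ playing the roles of the paper's $\epsilon_{2},\epsilon_{1}$) that the $\limsup$ of the tail probability is zero. Your remark on the order of quantifiers — fixing $L$ before choosing $N_{0}$ — is exactly the point the paper's $\overline{\lim}$ bookkeeping is handling implicitly.
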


\begin{proof}
$X_{N}=O_{p}(a_{N})$ means that $\forall\epsilon_{1}>0$, $\exists L>0$, such that
\begin{align}
&\text{P}(|X_{N}|>a_{N}L)<\epsilon_{1},\\
\Rightarrow& {\overline{\lim}}_{N\to\infty} \text{P}(|X_{N}|>a_{N}L)\leq\epsilon_{1},
\end{align}
where ${\overline{\lim}}_{N\to\infty}X_{N}=\lim_{k\to\infty}\sup_{N\geq k}X_{N}$ denotes the superior limit of the sequence.
Since as $N\to\infty$, we have $w_{N}\to\infty$, which also leads to $\epsilon_{2} w_{N}\to\infty$ for any $\epsilon_{2}>0$. Then for sufficiently large $N$, we always have $\epsilon_{2} w_{N}>L$, i.e.
\begin{align}
{\overline{\lim}}_{N\to\infty}\text{P}(|X_{N}|>\epsilon_{2} w_{N}a_{N})\leq {\overline{\lim}}_{N\to\infty}\text{P}(|X_{N}|>La_{N})\leq\epsilon_{1}.
\end{align}
Thus for any fixed $\epsilon_{2}>0$, $\forall \epsilon_{1}>0$, we have
\begin{align}
{\overline{\lim}}_{N\to\infty}\text{P}(|X_{N}|/w_{N}a_{N}>\epsilon_{2})\leq\epsilon_{1}.
\end{align}
If follows that $\forall \epsilon_{2}>0$, we have
\begin{align}
\lim_{N\to\infty}\text{P}(|X_{N}|/w_{N}a_{N}>\epsilon_{2})=0.
\end{align}
Therefore, $X_{N}/w_{N}a_{N}\overset{p}\to 0$ as $N\to\infty$.
\end{proof}

\subsection{Asymptotic Normality of A Consistent Root}

\begin{lemma}\label{lemma:asymptotic normality of a consistent root}(\cite{Amemiya1985} Theorem $4.1.3$ Page $111-112$)
{\it Make the assumptions:
\begin{enumerate}
\item \label{asp1 in lemma:Euclidean space}
Let $\Omega$ be an open subset of the Euclidean p-space. (Thus the true value $\eta^{*}$ is an interior point of $\Omega$. )
\item \label{asp2 in lemma:measurability of F}
$M_{N}(Y,\eta)$ is a measurable function of $Y$ for all $\eta\in\Omega$, and $\partial M_{N}/\partial\eta$ exists and is continuous in an open neighborhood $\Omega_{1}(\eta^{*})$ of $\eta^{*}$.
\item \label{asp3 in lemma:convergence of F}
There exists an open neighborhood $\Omega_{2}(\eta^{*})$ of $\eta^{*}$ such that $N^{-1}M_{N}(\eta)$ converges to a nonstochastic function $M(\eta)$ in probability and uniformly in $\eta$ in $\Omega_{2}(\eta^{*})$, and $M(\eta)$ attains a strict local minimum at $\eta^{*}$.
\item \label{asp4 in lemma:continuity of 2nd derivative}
$\partial^2 M_{N}/\partial\eta\partial\eta^{T}$ exists and is continuous in an open, convex neighborhood of $\eta^{*}$.
\item \label{asp5 in lemma:convergence of 2nd derivative}
$N^{-1}\left.(\partial^2 M_{N}/\partial\eta\partial\eta^{T})\right|_{\tilde{\eta}_{N}}$ converges to finite invertible $A(\eta^{*})=\lim_{N\to\infty}\E[ \left. N^{-1}(\partial^2 M_{N}/\partial\eta\partial\eta^{T})]\right|_{\eta^{*}}$ in probability for any sequence $\tilde{\eta}_{N}$ such that $\lim_{N\to\infty} \tilde{\eta}_{N}=\eta^{*}$ in probability.
\item \label{asp6 in lemma:convergence of 1st derivative}
$N^{-1/2}(\partial M_{N}/\partial \eta)|_{\eta^{*}}\overset{d}\to\mathcal{N}({\0},B(\eta^{*}))$, where $B(\eta^{*})=\lim_{N\to\infty}\left.\E[ N^{-1}(\partial M_{N}/\partial\eta)_{\eta^{*}}\times (\partial M_{N}/\partial\eta^{T})]\right|_{\eta^{*}}$ in probability.
\end{enumerate}
Let $\eta_{N}$ be the set of roots of the equation
\begin{align}
\frac{\partial M_{N}}{\partial \eta}=\0
\end{align}
corresponding to the local minima. Let $\{\hat{\eta}_{N}\}$ be a sequence obtained by choosing one element from ${\eta}_{N}$ such that $\plim \hat{\eta}_{N}=\eta^{*}$, where $\hat{\eta}_{N}$ can be called a consistent root. Then as $N\to\infty$,
\begin{align}
\sqrt{N}(\hat{\eta}_{N}-\eta^{*})\overset{d}\to\mathcal{N}({0},A(\eta^{*})^{-1}B(\eta^{*})A(\eta^{*})^{-1}).
\end{align}}
\end{lemma}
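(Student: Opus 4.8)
The plan is to obtain the limiting distribution from a first-order Taylor (mean-value) expansion of the gradient $\partial M_{N}/\partial\eta$ about $\eta^{*}$, evaluated at the consistent root $\hat{\eta}_{N}$. First I would note that, by hypothesis, $\hat{\eta}_{N}$ is chosen from the roots of $\partial M_{N}/\partial\eta=\0$ that correspond to local minima, so the first-order optimality condition
\begin{align}
\left.\frac{\partial M_{N}}{\partial\eta}\right|_{\hat{\eta}_{N}}=\0
\end{align}
holds. Since $\plim_{N\to\infty}\hat{\eta}_{N}=\eta^{*}$ and $\eta^{*}$ is interior by assumption \ref{asp1 in lemma:Euclidean space}, for $N$ large enough $\hat{\eta}_{N}$ lies in the open convex neighborhood of assumption \ref{asp4 in lemma:continuity of 2nd derivative} where $\partial^{2}M_{N}/\partial\eta\partial\eta^{T}$ exists and is continuous, which legitimizes the expansion.

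Next I would apply the mean-value theorem component-wise to the score: for each $k$th component there is a point $\bar{\eta}_{N}^{(k)}$ on the segment joining $\hat{\eta}_{N}$ and $\eta^{*}$ with
\begin{align}
\0=\left.\frac{\partial M_{N}}{\partial\eta}\right|_{\hat{\eta}_{N}}
=\left.\frac{\partial M_{N}}{\partial\eta}\right|_{\eta^{*}}
+H_{N}(\hat{\eta}_{N}-\eta^{*}),
\end{align}
where the $k$th row of $H_{N}$ is $\left.\partial^{2}M_{N}/\partial\eta_{k}\partial\eta^{T}\right|_{\bar{\eta}_{N}^{(k)}}$. Multiplying by $1/\sqrt{N}$ and rearranging gives
\begin{align}
\sqrt{N}(\hat{\eta}_{N}-\eta^{*})
=-\left[\frac{1}{N}H_{N}\right]^{-1}\left[\frac{1}{\sqrt{N}}\left.\frac{\partial M_{N}}{\partial\eta}\right|_{\eta^{*}}\right],
\end{align}
provided $H_{N}$ is invertible, which I would justify below.

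Then I would take limits termwise. Each $\bar{\eta}_{N}^{(k)}$ is squeezed between $\hat{\eta}_{N}$ and $\eta^{*}$, so $\plim_{N\to\infty}\bar{\eta}_{N}^{(k)}=\eta^{*}$; assumption \ref{asp5 in lemma:convergence of 2nd derivative} therefore applies to every such sequence and yields $\frac{1}{N}H_{N}\overset{p}\to A(\eta^{*})$, a finite invertible matrix. Invertibility of $\frac{1}{N}H_{N}$ (hence of $H_{N}$) holds with probability tending to one because the determinant is continuous and $\det A(\eta^{*})\neq 0$, and by the continuous-mapping theorem $(\frac{1}{N}H_{N})^{-1}\overset{p}\to A(\eta^{*})^{-1}$. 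Assumption \ref{asp6 in lemma:convergence of 1st derivative} supplies $\frac{1}{\sqrt{N}}\left.\partial M_{N}/\partial\eta\right|_{\eta^{*}}\overset{d}\to\mathcal{N}(\0,B(\eta^{*}))$. Slutsky's theorem then combines the in-probability limit of the matrix factor with the in-distribution limit of the score, giving
\begin{align}
\sqrt{N}(\hat{\eta}_{N}-\eta^{*})\overset{d}\to -A(\eta^{*})^{-1}\mathcal{N}(\0,B(\eta^{*})),
\end{align}
and since $A(\eta^{*})$ is a symmetric Hessian limit, the linear-image rule for Gaussians yields the covariance $A(\eta^{*})^{-1}B(\eta^{*})A(\eta^{*})^{-T}=A(\eta^{*})^{-1}B(\eta^{*})A(\eta^{*})^{-1}$, as claimed.

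The main obstacle is the component-wise nature of the mean-value expansion: the matrix $H_{N}$ is evaluated at a different intermediate point $\bar{\eta}_{N}^{(k)}$ in each row, so it is not literally $\partial^{2}M_{N}/\partial\eta\partial\eta^{T}$ at a single argument. The resolution is precisely assumption \ref{asp5 in lemma:convergence of 2nd derivative}, which is stated for an \emph{arbitrary} sequence $\tilde{\eta}_{N}\overset{p}\to\eta^{*}$; applying it row by row shows every row of $\frac{1}{N}H_{N}$ converges in probability to the corresponding row of $A(\eta^{*})$, so the matrix limit holds despite the differing arguments. The remaining care is to ensure the inverse exists and that the two modes of convergence combine correctly, which the continuous-mapping theorem and Slutsky handle once the Gaussianity of the score and the invertibility of the limiting Hessian are in place.
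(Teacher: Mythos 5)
The paper offers no proof of this lemma at all: it is quoted verbatim from the cited source (Amemiya 1985, Theorem 4.1.3), and your argument --- expand the first-order condition at the consistent root by a component-wise mean-value theorem, apply the Hessian-convergence assumption row by row to the differing intermediate points, invoke the score CLT, and combine via invertibility-with-probability-tending-to-one and Slutsky --- is precisely the standard proof given in that reference. Your proposal is correct, including the two points that genuinely require care (the row-wise nature of the mean-value expansion and the symmetry of $A(\eta^{*})$ ensuring $A(\eta^{*})^{-T}=A(\eta^{*})^{-1}$), so nothing further is needed.
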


\subsection{Convergence in Probability}

\begin{lemma}\label{lemma:convergence in probability}(\cite{Amemiya1985} Theorem $4.1.5$ Page $113$)
{\it Suppose $M_{N}(\eta)$ converges in probability to a nonstochastic function $M(\eta)$ uniformly in $\eta$ in an open neighborhood of $\eta^{*}$. Then $\plim_{N\to\infty}M_{N}(\hat{\eta})=M(\eta^{*})$ if $\plim_{N\to\infty}\hat{\eta}=\eta^{*}$ and $M(\eta)$ is continuous at $\eta^{*}$.}
\end{lemma}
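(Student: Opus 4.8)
The plan is to establish $\plim_{N\to\infty}M_{N}(\hat{\eta})=M(\eta^{*})$ through the triangle-inequality decomposition
$$|M_{N}(\hat{\eta})-M(\eta^{*})|\leq|M_{N}(\hat{\eta})-M(\hat{\eta})|+|M(\hat{\eta})-M(\eta^{*})|,$$
and then to show that each of the two terms on the right-hand side converges to zero in probability, from which the claim follows by a union bound. The first term captures the random fluctuation of $M_{N}$ evaluated at the (random) argument $\hat{\eta}$, while the second term is a purely deterministic continuity effect transported through the consistency $\plim_{N\to\infty}\hat{\eta}=\eta^{*}$.

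First I would control the term $|M_{N}(\hat{\eta})-M(\hat{\eta})|$ using the hypothesis of uniform convergence in probability. Let $U$ denote the open neighborhood of $\eta^{*}$ on which $M_{N}$ converges to $M$ uniformly, and set $G_{N}=\sup_{\eta\in U}|M_{N}(\eta)-M(\eta)|$, so that uniform convergence in probability means exactly $G_{N}\overset{p}\to 0$. The subtlety is that the pointwise bound $|M_{N}(\hat{\eta})-M(\hat{\eta})|\leq G_{N}$ holds only on the event $\{\hat{\eta}\in U\}$. Since $\eta^{*}$ is interior to $U$, there is a radius $r>0$ with the ball about $\eta^{*}$ of radius $r$ contained in $U$; because $\hat{\eta}\overset{p}\to\eta^{*}$, we have $\Prob(\hat{\eta}\notin U)\leq\Prob(\|\hat{\eta}-\eta^{*}\|_{2}\geq r)\to 0$. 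Hence for any $\epsilon>0$,
$$\Prob(|M_{N}(\hat{\eta})-M(\hat{\eta})|>\epsilon)\leq\Prob(G_{N}>\epsilon)+\Prob(\hat{\eta}\notin U),$$
and both summands tend to zero, so the first term vanishes in probability.

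Next I would dispatch the term $|M(\hat{\eta})-M(\eta^{*})|$ using the continuity of $M$ at $\eta^{*}$. Given $\epsilon>0$, continuity supplies $\delta>0$ such that $\|\eta-\eta^{*}\|_{2}<\delta$ implies $|M(\eta)-M(\eta^{*})|<\epsilon$; consequently the event $\{|M(\hat{\eta})-M(\eta^{*})|\geq\epsilon\}$ is contained in $\{\|\hat{\eta}-\eta^{*}\|_{2}\geq\delta\}$, whose probability tends to zero by consistency. This is precisely the continuous-mapping argument specialized to the deterministic map $M$ at the single point $\eta^{*}$, and it requires no uniformity.

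Finally I would assemble the two pieces: for any $\epsilon>0$,
$$\Prob(|M_{N}(\hat{\eta})-M(\eta^{*})|>\epsilon)\leq\Prob(|M_{N}(\hat{\eta})-M(\hat{\eta})|>\epsilon/2)+\Prob(|M(\hat{\eta})-M(\eta^{*})|>\epsilon/2),$$
and letting $N\to\infty$ drives both terms to zero. The main obstacle is the first step: the uniform-convergence hypothesis only governs $M_{N}$ on the fixed neighborhood $U$, so one must first confine the random evaluation point $\hat{\eta}$ to $U$ with probability approaching one before the uniform bound $G_{N}$ can legitimately be invoked. Once that event-trimming is handled, the remainder is routine $\epsilon$-$\delta$ bookkeeping.
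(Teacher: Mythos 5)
Your proof is correct, and it matches the paper's treatment in the only sense available: the paper does not prove this lemma at all but quotes it from Amemiya (1985, Theorem 4.1.5), and your argument is precisely the standard textbook proof of that result. In particular, the decomposition $|M_{N}(\hat{\eta})-M(\eta^{*})|\leq|M_{N}(\hat{\eta})-M(\hat{\eta})|+|M(\hat{\eta})-M(\eta^{*})|$, the confinement of $\hat{\eta}$ to the neighborhood $U$ on an event of probability tending to one before invoking the uniform bound $G_{N}=\sup_{\eta\in U}|M_{N}(\eta)-M(\eta)|$, and the continuity-plus-consistency handling of the deterministic term are exactly the steps of the classical argument, with the event-trimming subtlety correctly identified and handled.
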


\addtolength{\textheight}{-12cm}   % This command serves to balance the column lengths
                                  % on the last page of the document manually. It shortens
                                  % the textheight of the last page by a suitable amount.
                                  % This command does not take effect until the next page
                                  % so it should come on the page before the last. Make
                                  % sure that you do not shorten the textheight too much.

%\section*{ACKNOWLEDGMENT}

%%%%%%%%%%%%%%%%%%%%%%%%%%%%%%%%%%%%%%%%%%%%%%%%%%%%%%%%%%%%%%%%%%%%%%%%%%%%%%%%

\bibliographystyle{IEEEtran}
\bibliography{IEEEabrv,root}

\begin{thebibliography}{10}
\providecommand{\url}[1]{#1}
\csname url@rmstyle\endcsname
\providecommand{\newblock}{\relax}
\providecommand{\bibinfo}[2]{#2}
\providecommand\BIBentrySTDinterwordspacing{\spaceskip=0pt\relax}
\providecommand\BIBentryALTinterwordstretchfactor{4}
\providecommand\BIBentryALTinterwordspacing{\spaceskip=\fontdimen2\font plus
\BIBentryALTinterwordstretchfactor\fontdimen3\font minus
  \fontdimen4\font\relax}
\providecommand\BIBforeignlanguage[2]{{%
\expandafter\ifx\csname l@#1\endcsname\relax
\typeout{** WARNING: IEEEtran.bst: No hyphenation pattern has been}%
\typeout{** loaded for the language `#1'. Using the pattern for}%
\typeout{** the default language instead.}%
\else
\language=\csname l@#1\endcsname
\fi
#2}}

\bibitem{PDCDL2014}
G.~Pillonetto, F.~Dinuzzo, T.~Chen, G.~De~Nicolao, and L.~Ljung, ``Kernel
  methods in system identification, machine learning and function estimation: A
  survey,'' \emph{Automatica}, vol.~50, no.~3, pp. 657--682, 2014.

\bibitem{Ljung1999}
L.~Ljung, \emph{System identification: theory for the user}.\hskip 1em plus
  0.5em minus 0.4em\relax Upper Saddle River, NJ: Prentice Hall, 12 1999.

\bibitem{PD2010}
G.~Pillonetto and G.~De~Nicolao, ``A new kernel-based approach for linear
  system identification,'' \emph{Automatica}, vol.~46, no.~1, pp. 81--93, 2010.

\bibitem{COL2012}
T.~Chen, H.~Ohlsson, and L.~Ljung, ``On the estimation of transfer functions,
  regularizations and gaussian processes¡ªrevisited,'' \emph{Automatica},
  vol.~48, no.~8, pp. 1525--1535, 2012.

\bibitem{PC2015}
G.~Pillonetto and A.~Chiuso, ``Tuning complexity in regularized kernel-based
  regression and linear system identification: The robustness of the marginal
  likelihood estimator,'' \emph{Automatica}, vol.~58, pp. 106--117, 2015.

\bibitem{MCL2018}
B.~Mu, T.~Chen, and L.~Ljung, ``On asymptotic properties of hyperparameter
  estimators for kernel-based regularization methods,'' \emph{Automatica},
  vol.~94, pp. 381--395, 2018.

\bibitem{GHW1979}
G.~H. Golub, M.~Heath, and G.~Wahba, ``Generalized cross-validation as a method
  for choosing a good ridge parameter,'' \emph{Technometrics}, vol.~21, no.~2,
  pp. 215--223, 1979.

\bibitem{CL2013}
T.~Chen and L.~Ljung, ``Implementation of algorithms for tuning parameters in
  regularized least squares problems in system identification,''
  \emph{Automatica}, vol.~49, no.~7, pp. 2213 -- 2220, 2013.

\bibitem{Ljung1995}
L.~Ljung, \emph{System identification toolbox for use with MATLAB}.\hskip 1em
  plus 0.5em minus 0.4em\relax The Math Works, 1995.

\bibitem{PP2012}
\BIBentryALTinterwordspacing
K.~B. Petersen and M.~S. Pedersen, ``The matrix cookbook, nov 2012,'' 2012.
  [Online]. Available: \url{http://www2.imm.dtu.dk/pubdb/p.php?3274}
\BIBentrySTDinterwordspacing

\bibitem{Greene2003}
W.~H. Greene, \emph{Econometric analysis}.\hskip 1em plus 0.5em minus
  0.4em\relax Pearson Education India, 2003.

\bibitem{RS2008}
A.~C. Rencher and G.~B. Schaalje, \emph{Linear models in statistics}.\hskip 1em
  plus 0.5em minus 0.4em\relax John Wiley \& Sons, 2008.

\bibitem{Janson2009}
\BIBentryALTinterwordspacing
S.~Janson, ``Probability asymptotics: notes on notation,'' 2009. [Online].
  Available: \url{http://www2.math.uu.se/~svante/papers/sjN6.pdf}
\BIBentrySTDinterwordspacing

\bibitem{Amemiya1985}
T.~Amemiya, \emph{Advanced econometrics}.\hskip 1em plus 0.5em minus
  0.4em\relax Harvard university press, 1985.

\end{thebibliography}

\end{document}